\documentclass[11pt]{amsart}

\usepackage[hang,flushmargin]{footmisc}

\usepackage{amscd,amssymb,amsopn,amsmath,amsthm,mathrsfs,graphics,amsfonts,enumerate,verbatim,calc
}
\usepackage{bbm}
\usepackage[all,cmtip]{xy}
\usepackage[all]{xy}
\usepackage{tikz}
\usepackage{lscape}
\usepackage{enumitem}
\usetikzlibrary{matrix,arrows,decorations.pathmorphing,cd}
\usepackage{hyperref}
\hypersetup{colorlinks=true,linkcolor={blue}}
\usepackage{cleveref}
\usepackage{url}

\usepackage{adjustbox}

\usepackage{scalerel}
\usepackage{stackengine,wasysym}
\usetikzlibrary {positioning}
\usetikzlibrary{patterns}
\usetikzlibrary{calc}
\definecolor {processblue}{cmyk}{0.96,0,0,0}

\usepackage{subfigure}

\usepackage{comment}

\usepackage{ dsfont }

\usepackage{color}

\usepackage[OT2,OT1]{fontenc}
\newcommand\cyr{%
\renewcommand\rmdefault{wncyr}
\renewcommand\sfdefault{wncyss}
\renewcommand\encodingdefault{OT2}
\normalfont
\selectfont}
\DeclareTextFontCommand{\textcyr}{\cyr}

\usepackage{amssymb,amsmath}

\DeclareFontFamily{OT1}{rsfs}{}
\DeclareFontShape{OT1}{rsfs}{n}{it}{<-> rsfs10}{}
\DeclareMathAlphabet{\mathscr}{OT1}{rsfs}{n}{it}

\topmargin=0in
\oddsidemargin=0in
\evensidemargin=0in
\textwidth=6.5in
\textheight=8.5in

\numberwithin{equation}{section}
\hyphenation{semi-stable}

\newtheorem{theorem}{Theorem}[section]
\newtheorem{lem}[theorem]{Lemma}
\newtheorem{cor}[theorem]{Corollary}

\newtheorem{quest}[theorem]{Question}

\newtheorem{prop}[theorem]{Proposition}

\theoremstyle{definition}
\newtheorem{defn}[theorem]{Definition}

\theoremstyle{remark}
\newtheorem{remark}[theorem]{Remark}

\newtheorem{example}[theorem]{Example}

\renewcommand{\tilde}{\widetilde}

\def\tr{(\mathbf{tr})}
\def\GC{(\mathbf{GC})}
\def\dep{(\mathbf{dep})}
\def\ldep{(\mathbf{ldep})}
\def\rdep{(\mathbf{rdep})}
\def\uac{(\mathbf{UAC})}
\def\ubc{(\mathbf{UBC})}

\newcommand{\hit}{\operatorname{ht}}

\newcommand{\Ass}{\operatorname{Ass}}

\renewcommand{\ker}{\operatorname{ker}}

\newcommand{\Spec}{\operatorname{Spec}}

\newcommand{\NF}{\operatorname{NF}}

\newcommand{\pd}{\operatorname{pd}}
\newcommand{\id}{\operatorname{id}}
\newcommand{\Ext}{\operatorname{Ext}}
\newcommand{\cone}{\operatorname{cone}}
\newcommand{\Supp}{\operatorname{Supp}}
\newcommand{\Tor}{\operatorname{Tor}}
\newcommand{\Hom}{\operatorname{Hom}}

\newcommand{\depth}{\operatorname{depth}}

\newcommand{\codepth}{\operatorname{codepth}}

\newcommand{\coker}{\operatorname{coker}}

\newcommand{\Min}{\operatorname{Min}}

\newcommand{\rank}{\ensuremath{\operatorname{rank}}}

\newcommand{\Tr}{\operatorname{Tr}}

\newcommand{\p}{\mathfrak{p}}
\newcommand{\q}{\mathfrak{q}}
\newcommand{\m}{\mathfrak{m}}

\newcommand{\Z}{\mathbb{Z}}

\newcommand{\w}{\omega}

\renewcommand{\bar}{\overline}

\newcommand{\Ann}{\operatorname{Ann}}

\newcommand{\RHom}{\operatorname{\mathbf{R}Hom}}





\begin{document}
\title[On the depth of tensor products over Cohen-Macaulay rings]{On the depth of tensor products over Cohen-Macaulay rings}

\author[Kimura]{Kaito Kimura}
\address[Kimura]{Graduate School of Mathematics, Nagoya University, Furocho, Chikusaku, Nagoya 464-8602, Japan}
\email{m21018b@math.nagoya-u.ac.jp}
\urladdr{https://sites.google.com/view/kaitokimura/}

\author[Lyle]{Justin Lyle}
\address[Lyle]{Department of Mathematics and Statistics \\ 221 Parker Hall\\
Auburn University\\
Auburn, AL 36849}
\email[Justin Lyle]{jll0107@auburn.edu}
\urladdr{https://jlyle42.github.io/justinlyle/}

\author[Soto Levins]{Andrew J. Soto Levins}
\address[Soto Levins]{Department of Mathematics and Statistics \\
Texas Tech University \\
Lubbock, TX 79409 USA}
\email[Andrew J. Soto Levins]{ansotole@ttu.edu}
\urladdr{https://sites.google.com/view/andrewjsotolevins}

\subjclass[2020]{Primary 13C15; Secondary 13D07}

\keywords{depth, tensor product, maximal Cohen-Macaulay, uniform Auslander condition}

\begin{abstract}
Inspired by classical work on the depth formula for tensor products of finitely generated $R$-modules, we introduce two conditions which we call $\ldep$ and $\rdep$ and their derived variations. We show for Cohen-Macaulay local rings that derived $\ldep$ is equivalent to $\dim(R)$ being a uniform Auslander bound for $R$, and if $\dim(R)>0$ that both are equivalent to $\ldep$. We introduce an analogous condition we call the \emph{uniform Buchweitz condition} and provide a corresponding theorem for the $\rdep$ condition. As a consequence of these results, we show $\ldep$ implies $\rdep$ when $R$ is Gorenstein and that the $\ldep$ and $\rdep$ conditions behave well under modding out by regular sequences and completion, but we give a concrete example showing they need not localize. Using our methods, we extend work of Jorgensen \cite{Jo99} by calculating the value $q_R(M,N):=\sup\{i \mid \Tor^R_i(M,N) \ne 0\}$ under certain conditions. 
    
\end{abstract}

\maketitle

\section{Introduction}
Let $(R,\m,k)$ be a (commutative) Noetherian local ring and let $M$ and $N$ be finitely generated $R$-modules. Understanding the depth of $M \otimes_R N$ has been an open line of investigation for several decades, and is an invariant intrinsically motivated by geometric phenomena. In particular, when $M$ and $N$ are $\Tor$-independent, that is when $\Tor^R_i(M,N)=0$ for all $i>0$, we philosophize that the homological independence of $M$ and $N$ should lead $\depth_R(M \otimes_R N)$ to be computable in a predictable way. This intuition can made precise when both $M$ and $N$ have finite projective dimension where it is relatively easy to see that the condition $\Tor^R_{i>0}(M,N)=0$ forces $\pd_R(M \otimes_R N)=\pd_R(M)+\pd_R(N)$. Equivalently, setting $\codepth_R(-):=\depth(R)-\depth_R(-)$, we have 
\[\codepth_R(M \otimes_R N)=\codepth_R(M)+\codepth_R(N)\] by the Auslander-Buchsbaum formula, which is commonly expressed in the literature as \[\depth_R(M \otimes_R N)+\depth(R)=\depth_R(M)+\depth_R(N)\] 
and simply referred to as the ``depth formula". 

It is natural to wonder in what generality the $\Tor$-independence condition $\Tor^R_{i>0}(M,N)=0$ will force the depth formula to hold. The seminal work of Auslander in 1961 shows that it is sufficient that only one of $M$ or $N$ have finite projective dimension \cite{Au61}. This result has been improved numerous times in the ensuing decades. It was shown to hold for any pair of $\Tor$-independent modules over a complete intersection in \cite{HW94}, and this was improved by Iyengar to show it is sufficient that the module $M$ have finite complete intersection dimension \cite{iyengar}. 

Christensen-Jorgensen showed that the depth formula holds for any pair of $\Tor$-independent modules over a so-called AB-ring, which is a Gorenstein ring satisfying the so-called \emph{Uniform Auslander Condition $\uac$} (see Definition \ref{uacdef}), and they show AB-rings even satisfy a derived variation. Recent work of Kimura-Lyle-Otake-Takahashi establishes the converse in the sense that the depth formula holds for a Gorenstein local ring $R$ of positive dimension if and only if $R$ is AB, and they use this characterization to give the first examples of local rings $R$ and $\Tor$-independent $R$-modules $M$ and $N$ for which the depth formula does not hold \cite{KL23}. 

In this work, we explore the depth formula in absence of the Gorenstein hypothesis and reveal the surprising subtleties that emerge when this condition is removed. In particular, in this setting it is more natural to bifurcate the depth formula into two conditions we call $\ldep$ and $\rdep$ based on each possible inequality (see Definition \ref{depdef}). In considering these conditions and their derived variants (see Definitions \ref{deriveddepdef} and \ref{deriveddepdefformodules}), we demonstrate that it is really the $\ldep$ condition that is tightly connected to the $\uac$. 

To present our results, we set the notation $q_R(M,N):=\sup\{i \mid \Tor^R_i(M,N) \ne 0\}$ and $b_R(M,N):=\sup\{i \mid \Ext^i_R(M,N) \ne 0\}$. Concretely, we prove the following (see Theorem \ref{ldepthm}):
\begin{theorem}\label{ldepintro}
Let $R$ be a CM ring of dimension $d$. Consider the following conditions:
\begin{enumerate}
\item[$(1)$] $R$ satisfies derived $\ldep$. 
\item[$(2)$] $R$ satisfies derived $\ldep$ for modules. 
\item[$(3)$] For all finite length modules $A,B$, and setting $M:=\Omega^d_R(A)$ and $N:=\Omega^d_R(B)$, if $q_R(M,N)<\infty$, then $M \otimes^L_R N$ is an MCM complex (in the sense of \cite{IM21}).
\item[$(4)$] If $M,N$ are MCM $R$-modules with $q_R(M,N)<\infty$, then $q_R(M,N)=0$ and $M \otimes_R N$ is MCM.
\item[$(5)$] If $M$ and $N$ are $R$-modules with $b_R(M,N)<\infty$, then $b_R(M,N) \le \codepth_R(M)$. 
\item[$(6)$] $R$ satisfies $\uac$ with $b_R=d$.
\item[$(7)$] For MCM $R$-modules $M$ and $N$, if $b_R(M,N)<\infty$, then $b_R(M,N)=0$.
\item[$(8)$] $R$ satisfies $\ldep$.
    
\end{enumerate}
Then we have the following:
\begin{enumerate}[label=(\upshape{\Roman*})]
\item Conditions $(1)-(6)$ are equivalent.
\item Conditions $(1)-(6)$ imply $(7)$ and the converse holds if $R$ admits a canonical module $\w_R$. 
\item Conditions $(1)-(6)$ imply $(8)$ and the converse holds if $\dim(R)>0$. 
    
\end{enumerate}
    
\end{theorem}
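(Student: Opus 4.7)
The overall strategy is to establish part (I) via a cycle $(1) \Rightarrow (2) \Rightarrow (3) \Rightarrow (4) \Rightarrow (5) \Rightarrow (6) \Rightarrow (1)$, and then to deduce (II) and (III) by short supplementary arguments. The implication $(1) \Rightarrow (2)$ is immediate since modules are complexes. Throughout, the passage between the Tor-phrased conditions (1)--(4), (8) and the Ext-phrased conditions (5)--(7) will rely on the standard syzygy shift relating $b_R(M,N)$ to $b_R(\Omega M, N)$, together with the fact that syzygies of MCM modules are MCM and $R$ being CM lets us identify $d = \dim R = \depth R$.

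For $(2) \Rightarrow (3)$, given finite length $A, B$ with $M = \Omega^d_R(A)$, $N = \Omega^d_R(B)$, both $M$ and $N$ are MCM, so derived $\ldep$ for modules yields $\depth_R(M \otimes^L_R N) \ge d$; since this complex is concentrated in degrees $[-q_R(M,N), 0]$, this is exactly the MCM-complex condition in the sense of Iyengar--Ma. For $(3) \Rightarrow (4)$, given MCM $M, N$ with $q := q_R(M,N) < \infty$, I would reduce to the $d$-th-syzygy-of-finite-length case using standard techniques in the stable category of MCM modules (up to free summands); the MCM-complex conclusion on a complex concentrated in $[-q, 0]$ then forces $q = 0$ and $M \otimes_R N$ MCM. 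The implications $(4) \Rightarrow (5) \Rightarrow (6)$ are handled by Tor--Ext duality together with the syzygy shift: replacing $M$ by $\Omega^{\codepth_R M}_R M$ produces an MCM module with appropriately shifted Ext, reducing the Ext bound in (5) to the MCM-to-MCM Tor statement of (4), and the numerical bound $b_R = d$ in $\uac$ is then essentially a restatement of (5). The closing link $(6) \Rightarrow (1)$ must convert the cohomological Ext vanishing with bound $d$ into a derived depth inequality on $M \otimes^L_R N$, which I expect to execute via a hyperhomology spectral sequence argument.

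For part (II), the forward direction is immediate from (5) because MCM modules have codepth $0$. The converse under the canonical module hypothesis would use $\omega_R$ to perform MCM approximation, transferring the uniform Ext bound for arbitrary modules to the MCM-to-MCM statement of (7); the presence of $\omega_R$ is critical since without it the relevant MCM approximation is unavailable. For part (III), the forward direction follows since derived $\ldep$ specializes to $\ldep$. The converse when $\dim R > 0$ would proceed by modding out a regular element, using preservation of $\ldep$ under this operation (asserted in the paper's abstract) together with induction on dimension; the positive-dimension hypothesis is essential to guarantee the existence of a regular element on $R$.

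The main obstacle is the implication $(6) \Rightarrow (1)$: lifting a numerical Ext-vanishing bound to a derived depth inequality on the derived tensor product requires careful bookkeeping, and while the Cohen-Macaulay hypothesis makes $d$ the correct target bound, extracting $\depth_R(M \otimes^L_R N)$ from control of $b_R$ will demand balancing contributions across the spectral sequence degrees. A secondary obstacle is the converse $(7) \Rightarrow (5)$ in part (II), where the canonical module hypothesis is genuinely needed; without it the MCM-approximation bridge collapses. Finally, the positive-dimension converse in (III) is delicate because the module-level condition must be bootstrapped to the derived setting, and the reduction mod a regular element is effectively the only available lever.
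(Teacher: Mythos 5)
Your proposal has genuine gaps at the three places where the real work of the theorem happens. First, in $(3) \Rightarrow (4)$ you propose to ``reduce to the $d$-th-syzygy-of-finite-length case using standard techniques in the stable category of MCM modules,'' but an arbitrary MCM module is not, up to free summands, a $d$-th syzygy of a finite length module, so there is no such standard reduction. The paper's argument instead passes from $M,N$ to $\tilde{M}:=\Omega^d_R(M/\underline{x}M)$, $\tilde{N}:=\Omega^d_R(N/\underline{x}N)$ and needs nontrivial transfer results in both directions: Lemma \ref{torcutdown} to see $q_R(\tilde M,\tilde N)<\infty$, the fact that $\tilde M$ is locally free on the punctured spectrum (so the top nonvanishing Tor has finite length, hence depth $0$, and Lemma \ref{negativeqr} forces $q_R(\tilde M,\tilde N)=0$ --- note that for a general MCM pair the top Tor need not have depth $0$, so your claim that MCM-ness of the complex ``forces $q=0$'' is not automatic), and Lemma \ref{cutdownMCM} to translate back to $\Tor^R_{1\le i\le d}(M,N)=0$ and $M\otimes_RN$ MCM. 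Even then one only has vanishing in the range $1\le i\le d$; the paper must iterate the whole argument with $N$ replaced by $\Omega^j_R(N)$ to conclude $q_R(M,N)=0$. None of this is visible in your sketch. Second, the closing implication $(6)\Rightarrow(1)$, which you correctly identify as the main obstacle, is left as a hoped-for spectral sequence; the paper does not argue this way at all. It closes the loop via $(4)\Rightarrow(2)$ (Proposition \ref{replacesyz}, a depth-lemma bootstrap along minimal free resolutions) together with $(2)\Rightarrow(1)$ (Theorem \ref{derivedldepvsformodules}, a truncation-plus-Koszul argument for complexes), and handles the Ext conditions by completing to get $\w_R$ and using the local duality identity $\RHom_R(M,N)\simeq\RHom_R(M\otimes^L_R\RHom_R(N,\w_R),\w_R)$ to convert $b_R$-bounds into depth statements ($(1)\Rightarrow(5)$ and $(5)\Rightarrow(3)$ via Proposition \ref{exttorall}). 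So the pivotal conversion between Ext bounds and derived depth is local duality, not a hyperhomology spectral sequence, and without it your cycle does not close.

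Third, your plan for the converse in (III) --- mod out a regular element, invoke preservation of $\ldep$, and induct on dimension --- is both circular and baseless: the preservation of module-level $\ldep$ under killing a regular element (Corollary \ref{ldepcomplete}) is in the paper a \emph{consequence} of Theorem \ref{ldepthm}, not an available tool for proving it (only the derived version, Proposition \ref{ldepcutsdown}, is proved independently), and the induction cannot terminate, since every Artinian local ring satisfies $\ldep$ trivially while $(1)$--$(6)$ may fail there, which is exactly why the hypothesis $\dim(R)>0$ is needed. The paper's actual argument for $(8)\Rightarrow(4)$ when $d>0$ is direct: reduce to $q_R(M,N)=1$ by a syzygy shift, use $\ldep$ to make $\Omega^1_R(\tilde M)\otimes_RN$ MCM, observe the embedding $\Tor^R_1(\tilde M,N)\hookrightarrow\Omega^1_R(\tilde M)\otimes_RN$ with $\Tor^R_1(\tilde M,N)$ of finite length, so positivity of depth (here $d>0$ enters) kills it, and then transfer back with Lemma \ref{cutdownMCM}. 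Your outline of (II) (MCM approximation over $\w_R$) does match the paper's $(7)\Rightarrow(5)$ argument, but as it stands the proposal does not constitute a proof of (I) or of the converse in (III).
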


Combining Theorem \ref{ldepintro} with work of Christensen-Jorgensen \cite[Corollary 5.3]{CJ15}, we show that $\ldep$ implies $\rdep$ for Gorenstein local rings of positive dimension (see Corollary \ref{gorcase}), thus recovering and refining \cite[Theorem 3.6]{KL23}.

By contrast, the $\rdep$ possesses certain features which distinguish it from $\ldep$ and is overall much harder to control. This is surprising in a sense, since one generally expects tensor products to have bad depth properties outside of special circumstances. For instance, while Theorem \ref{ldepintro} shows that derived $\ldep$ for modules is equivalent to the relatively strong $\uac$ condition even for Artinian algebras, we show that every Artinian local ring satisfies derived $\rdep$ for modules (Example \ref{derivedrdepArtinian}); one must allow consideration of complexes rather than modules to get a meaningful version of $\rdep$ in this setting. Nonetheless, we introduce the \emph{uniform Buchweitz condition} $\ubc$ to serve as a dual notion of $\uac$ (see Definition \ref{ubcdef} for the definition) and provide the analogue of Theorem \ref{ldepintro} for $\rdep$ with the following (see Theorem \ref{rdepthm}):
\begin{theorem}\label{rdepintro}
Let $R$ be a CM ring of dimension $d$. Consider the following conditions:
\begin{enumerate}
\item[$(1)$] $R$ satisfies derived $\rdep$.
\item[$(2)$] $R$ satisfies derived $\rdep$ for modules.
\item[$(3)$] $R$ satisfies $\rdep$.
\item[$(4)$] If $M$ and $N$ are $R$-modules with $q_R(M,N)=0$ that are locally free of constant rank on the punctured spectrum of $R$, then $\codepth_R(M \otimes_R N) \ge \codepth_R(M)+\codepth_R(N)$.
\item[$(5)$] If $M$ and $N$ are $R$-modules with $q_R(M,N)<\infty$, and if $M \otimes_R^L N$ is an MCM complex, then $M$ is MCM.
\item[$(6)$] If $M$ and $N$ are $R$-modules with $N$ MCM, with $q_R(M,N)=0$, and with $M \otimes_R N$ MCM, then $M$ is MCM.
\item[$(7)$] $R$ satisfies $\ubc$.
\item[$(8)$] If $M$ and $N$ are $R$-modules with $N$ MCM and $b_R(M,N)=0$, then $M$ is MCM.
\end{enumerate}

Then $(1) \Rightarrow (2) \iff (3) \iff (4) \Rightarrow (5) \Rightarrow (6)$, and $(3) \Rightarrow (7) \Rightarrow (8)$. If $d>0$, then $(2) \Rightarrow (1)$, and if $R$ has a canonical module $\w_R$, then $(6) \iff (8)$.
\end{theorem}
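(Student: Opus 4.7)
The plan is to work through the implications roughly in the listed order, saving the dimension-dependent and canonical module promotions for the end.

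The implication $(1) \Rightarrow (2)$ is immediate, since a module is a complex concentrated in degree zero. For $(2) \iff (3)$, the assumption $q_R(M, N) = 0$ forces $M \otimes^L_R N \simeq M \otimes_R N$, so the derived and classical codepth inequalities agree on the class of $\Tor$-independent module pairs, making the two conditions equivalent once one unpacks the definition of derived $\rdep$ for modules. For $(3) \iff (4)$, the forward direction is trivial; for the reverse I would reduce a general $\Tor$-independent pair to one locally free of constant rank on the punctured spectrum, using that over a CM ring sufficiently high syzygies are MCM and locally free off the maximal ideal, together with free summand adjustments to fix the rank. The main technical point is controlling how the codepth inequality transforms under these syzygy replacements.

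For $(4) \Rightarrow (5)$, given modules $M, N$ with $\ell := q_R(M, N) < \infty$ and $M \otimes^L_R N$ an MCM complex, I would pass to a sufficiently high syzygy $M'$ of $M$ to force $q_R(M', N) = 0$ and local freeness on the punctured spectrum, then invoke (4) to obtain a codepth bound forcing $M$ (and hence, via standard syzygy/depth bookkeeping, the original $M$) to be MCM. The implication $(5) \Rightarrow (6)$ is immediate since $q_R(M, N) = 0$ with $M \otimes_R N$ MCM means $M \otimes^L_R N \simeq M \otimes_R N$ is an MCM complex, so (5) applies. For $(3) \Rightarrow (7)$, I would unfold the definition of $\ubc$ as a uniform codepth bound dual to how $\uac$ bounds Ext vanishing, and verify it directly from the codepth inequality supplied by $\rdep$. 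For $(7) \Rightarrow (8)$, apply $\ubc$ to the pair $(M, N)$ with $N$ MCM to control the Ext behavior, then combine with the standard depth estimate—using that Ext vanishing into an MCM module imposes a lower bound on the depth of the source—to deduce that $M$ is MCM.

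For $(2) \Rightarrow (1)$ when $d > 0$, the standard soft-truncation trick lets one replace an arbitrary bounded complex by a module at the cost of degree shifts, which is harmless when there is positive depth to absorb them; in dimension zero this fails, as illustrated by Example \ref{derivedrdepArtinian}. For $(6) \iff (8)$ in the presence of a canonical module, the dualizing functor $N^\dagger := \Hom_R(N, \w_R)$ bridges the two: for MCM $N$ this is MCM, and Hom-tensor adjunction $\Hom_R(M \otimes_R N, \w_R) \cong \Hom_R(M, N^\dagger)$ combined with local duality translates the Tor-based hypothesis of (6) into the Ext-based hypothesis of (8) and vice versa. I expect the main technical obstacle to be the reduction in $(4) \Rightarrow (3)$ and the syzygy manipulations in $(4) \Rightarrow (5)$, both of which must thread carefully between derived vanishing assumptions and the local-freeness-of-constant-rank hypothesis on the punctured spectrum; the remaining steps should be straightforward definitional manipulations or standard duality arguments.
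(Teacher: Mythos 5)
There is a genuine gap, and it sits at the center of the theorem. You treat $(2) \iff (3)$ as definitional, but condition $(2)$ (derived $\rdep$ for modules) quantifies over all pairs with $q_R(M,N)<\infty$, whereas $(3)$ only concerns Tor-independent pairs; your observation that $M \otimes^L_R N \simeq M \otimes_R N$ when $q_R(M,N)=0$ gives only $(2) \Rightarrow (3)$. The converse $(3) \Rightarrow (2)$, equivalently $(4) \Rightarrow (2)$, is the real content of the statement, and the paper devotes Theorem \ref{rdepvsderived} to it: an induction on $\dim(\NF(M))+\dim(\NF(N))$ using the pushforward construction of Lemma \ref{pushforwardlemma}, which strictly lowers the dimension of the nonfree locus while preserving depth and the bound on $q_R$, together with the constant-rank reductions of Lemmas \ref{constant} and \ref{reducetoconstantrank} (the latter handling $d=1$, where local freeness in codimension one does not force constant rank). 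Your proposed substitute---``over a CM ring sufficiently high syzygies are MCM and locally free off the maximal ideal''---is false unless $R$ has an isolated singularity: syzygies of a module that is not locally free on the punctured spectrum remain non-free at the same nonmaximal primes. So both your reduction for $(4) \Rightarrow (3)$ and your argument for $(4) \Rightarrow (5)$ rest on a claim that fails in general.

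The $(4) \Rightarrow (5)$ sketch has a second independent problem: even if you could arrange $q_R(M',N)=0$ for a high syzygy $M'$ and apply $(4)$ to conclude $M'$ is MCM, no ``syzygy/depth bookkeeping'' recovers that $M$ itself is MCM, since over a CM ring every module has MCM high syzygies. In the paper this implication is immediate only because $(4) \Rightarrow (2)$ has already been established: if $M \otimes^L_R N$ is an MCM complex then $\depth_R(M\otimes^L_R N)\ge d$, and the derived inequality $\depth_R(M\otimes^L_R N)\le \depth_R(M)+\depth_R(N)-d$ forces $\depth_R(M)\ge d$. Similarly, $(3) \Rightarrow (7)$ cannot be ``verified directly from the codepth inequality'': $\ubc$ is an Ext-side statement about arbitrary pairs with $b_R(M,N)<\infty$, and the paper's bridge is to pass to the completion, upgrade $(3)$ to derived $\rdep$ (which again uses Theorem \ref{rdepvsderived} and $d>0$; the case $d=0$ is trivial separately), and then apply local duality via $\RHom_R(M,N)\simeq\RHom_R(M\otimes^L_R\RHom_R(N,\w_R),\w_R)$. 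Your remaining steps ($(1)\Rightarrow(2)$, $(5)\Rightarrow(6)$, $(7)\Rightarrow(8)$, $(2)\Rightarrow(1)$ for $d>0$ via truncation, and $(6)\iff(8)$ via canonical duality as in Proposition \ref{exttorall}) are in line with the paper, but without the nonfree-locus induction the chain from $(3)$ or $(4)$ to the derived conditions---and hence to $(5)$, $(7)$ and the rest---does not go through.
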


Theorems \ref{ldepintro} and \ref{rdepintro} notably demonstrate that phenomena such as the $\uac$, which is motivated from the representation theory of Artin algebras, can be detected even in the Artinian setting via depth conditions. The key point is that while the depth of a module is always zero and so does not contain meaningful information in the Artinian setting, depth for complexes behaves as robustly as it does in higher dimension.

A consequence of Theorems \ref{ldepintro} and \ref{rdepintro} is that $\ldep$ and $\rdep$ behave well under certain standard operations in commutative algebra e.g.~modding out by regular sequences and completion (see Propositions \ref{ldepcutsdown} and \ref{cutsdown}, and Corollaries \ref{ldepcompletes}, \ref{ldepcomplete}, and \ref{rdepcompletes}). However, we give a concrete example to show they need not localize (Example \ref{notlocalize}).

Inspired by work of Jorgensen \cite{Jo99}, we also describe the explicit connection between the $\ldep$ and $\rdep$ conditions and the invariant $q_R(M,N)$ when it is finite. Our main theorem in this direction gives a direct extension of \cite[Theorem 2.2]{Jo99} as the derived $\dep$ condition is known to hold for modules of finite complete intersection dimension, and since finiteness of complete intersection dimension localizes (see Remark \ref{jorrmk}).
\begin{theorem}\label{qrtheoremintro}
Suppose $R$ is a Noetherian local ring and let $M$ and $N$ be finitely generated $R$-modules with $q_R(M,N)<\infty$. Then we have the following:
\begin{enumerate}
\item[$(1)$] If $R_{\p}$ satisfies derived $\ldep$ for all $\p \in \Spec(R)$, then 
\[q_R(M,N) \le \sup\{\depth(R_{\p})-\depth_R(M_{\p})-\depth_R(N_{\p}) \mid \p \in \Supp(M) \cap \Supp(N)\}.\]
\item[$(2)$] If $R_{\p}$ satisfies derived $\rdep$ for all $\p \in \Spec(R)$, then
\[q_R(M,N) \ge \sup\{\depth(R_{\p})-\depth_R(M_{\p})-\depth_R(N_{\p}) \mid \p \in \Supp(M) \cap \Supp(N)\}.\]
\end{enumerate}
\end{theorem}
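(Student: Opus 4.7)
The plan is to localize at a well-chosen prime and exploit the standard hyperext spectral sequence
\[
E_2^{p,t} = \Ext^p_{R_\p}\!\bigl(k(\p),\, H^t(M_\p \otimes^L_{R_\p} N_\p)\bigr) \;\Longrightarrow\; \Ext^{p+t}_{R_\p}\!\bigl(k(\p),\, M_\p \otimes^L_{R_\p} N_\p\bigr),
\]
together with the convention $\depth_R X = \inf\{n : \Ext^n_R(k,X) \ne 0\}$ for complexes with finitely generated bounded cohomology. In cohomological indexing, $H^{-i}(M_\p \otimes^L_{R_\p} N_\p) = \Tor^{R_\p}_i(M_\p, N_\p)$ is supported in $[-q_\p, 0]$, where $q_\p := q_{R_\p}(M_\p, N_\p) \le q_R(M,N) =: q$ (by exactness of localization). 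Since $p \ge 0$, each nonzero $E_2^{p,t}$ lies at total degree $p + t \ge -q_\p$, yielding the general bound $\depth_{R_\p}(M_\p \otimes^L_{R_\p} N_\p) \ge -q_\p$.

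For part $(1)$, choose $\p \in \Ass_R \Tor^R_q(M,N)$, so that $\p \in \Supp(M) \cap \Supp(N)$, $q_\p = q$, and the corner term $E_2^{0,-q} = \Hom_{R_\p}(k(\p), \Tor^{R_\p}_q(M_\p, N_\p))$ is nonzero. This corner term survives to $E_\infty$: incoming differentials on page $r \ge 2$ originate at $(-r, -q + r - 1)$, which has negative first coordinate and hence vanishing $E_2$; outgoing differentials land at $(r, -q - r + 1)$, whose second coordinate corresponds to $\Tor^{R_\p}_{q + r - 1}(M_\p, N_\p) = 0$ by the definition of $q_\p = q$. Therefore $\depth_{R_\p}(M_\p \otimes^L_{R_\p} N_\p) = -q$, and applying derived $\ldep$ at $R_\p$ gives
\[
-q \;=\; \depth_{R_\p}(M_\p \otimes^L_{R_\p} N_\p) \;\ge\; \depth_{R_\p}(M_\p) + \depth_{R_\p}(N_\p) - \depth(R_\p),
\]
so that $q \le \depth(R_\p) - \depth_{R_\p}(M_\p) - \depth_{R_\p}(N_\p)$, which is at most the supremum in the statement.

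For part $(2)$, fix any prime $\p \in \Supp(M) \cap \Supp(N)$; then $M_\p, N_\p \ne 0$ and hence $q_\p \ge 0$. Combining the general lower bound above with derived $\rdep$ at $R_\p$ gives
\[
-q \;\le\; -q_\p \;\le\; \depth_{R_\p}(M_\p \otimes^L_{R_\p} N_\p) \;\le\; \depth_{R_\p}(M_\p) + \depth_{R_\p}(N_\p) - \depth(R_\p),
\]
so $q \ge \depth(R_\p) - \depth_{R_\p}(M_\p) - \depth_{R_\p}(N_\p)$, and taking the supremum over $\p$ finishes the proof.

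The main technical step is the identification $\depth_{R_\p}(M_\p \otimes^L_{R_\p} N_\p) = -q$ in part $(1)$; the choice $\p \in \Ass_R \Tor^R_q(M,N)$ is precisely what ensures that both the vanishing of the relevant differentials and the nonvanishing of the corner term $E_2^{0,-q}$ occur simultaneously. Once this is in hand, the $\ldep$ and $\rdep$ inequalities apply at $R_\p$ essentially by definition, and the two parts of the theorem follow as complementary halves of the same spectral-sequence estimate.
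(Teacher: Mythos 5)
Your proposal is correct and follows essentially the same route as the paper: the paper's proof (Lemmas \ref{lemmaoneforjorgensensformula} and \ref{lemmatwoforjorgensensformula}) also localizes, uses the bound $\depth_{R_{\p}}(M_{\p}\otimes^L_{R_{\p}}N_{\p})\ge -q_{R_{\p}}(M_{\p},N_{\p})$ with equality when the localized top Tor has depth zero, and then applies derived $\ldep$/$\rdep$ over $R_{\p}$. The only differences are cosmetic: you re-derive the content of Lemma~\ref{negativeqr} via the hyperext spectral sequence instead of citing it, and you localize at an associated prime of $\Tor^R_{q}(M,N)$ where the paper takes a minimal prime (both choices force the localized top Tor to have depth zero).
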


We now outline the structure of our paper: In Section \ref{background} we set notation and provide necessary background. In Section \ref{prelimsection}, we provide several preliminary results of independent interest that are instrumental in the proofs of our main theorems. We also include Subsection \ref{subsection} which compares derived $\ldep$/$\rdep$ with the corresponding versions for modules.  Section \ref{ldepsection} contains the proof of Theorem \ref{ldepintro} and Section \ref{rdepsection} is devoted to the proof of Theorem \ref{rdepintro}. In Section \ref{qrsection}, we prove Theorem \ref{qrtheoremintro}. Section \ref{questionsection} discusses several open questions brought forth by our work.

\section{Background and Notation}\label{background}
Throughout, we let $(R,\m,k)$ be a Noetherian local ring. All modules or complexes of modules are assumed to be finitely generated unless otherwise stated. For an $R$-complex $C$ we denote the $i$th graded component under the homological grading by $C_i$ and under the cohomological grading $C^i$, so $C^i=C_{-i}$. If $C$ is an $R$-complex, then for any $i \in \Z$, we let $\tau_{\ge i}(C):= \cdots \rightarrow C_{i+1} \rightarrow C_i \rightarrow 0$ and $\tau^{\ge i}(C):=0 \rightarrow C^i \rightarrow C^{i+1} \rightarrow \cdots$ denote the \emph{brutal truncations of $C$}. So there is a natural surjection of chain complexes $C \twoheadrightarrow \tau_{\ge i}(C)$ and a natural injection of chain complexes $\tau^{\ge i}(C) \hookrightarrow C$ for any $i$.

\begin{defn}\label{depthdef}
Let $X= \cdots X_{i+1} \to X_i \to X_{i-1} \to \cdots$ be a complex of $R$-modules. Let $I$ be an injective resolution of $X$, that is, a complex of injective $R$-modules that is quasi-isomorphic to $X$. 
We define $\Ext^i_R(k,X):=H_{-i}(\Hom_R(k,I))$ and $H^i_{\m}(X):=H_{-i}(\Gamma_{\m}(I))$ where $\Gamma_{\m}(-)$ denotes the $\m$-power torsion functor. Then the depth of $X$ is the value 
\[\depth_R(X):=\inf\{i \mid \Ext^i_R(k,X) \ne 0\}=\inf\{i \mid H_{\m}^i(X) \ne 0\}.\]
where the last equality follows from \cite[Theorem I]{FI03}. 
    
\end{defn}

\begin{remark}
The notion of depth introduced in Definition \ref{depthdef} can be defined in numerous ways analogous to the familiar notion of depth for modules (see \cite{FI03}), and it is clear that the depth of a complex is invariant under quasi-isomorphism; in particular, the depth of a module in the familiar sense agrees with its depth viewed as a complex concentrated in degree zero. The definition we have provided is the one most salient to our needs. In light of the discussion above, we will often think of $\depth$ as being defined on objects in the derived category. 
\end{remark}

The following definition was introduced by \cite{IM21} and used to provide new proofs of several of the classical homological conjectures:
\begin{defn}\label{mcmdef}
We say an $R$-complex $X$ is maximal Cohen-Macaulay (MCM) if the following hold:
\begin{enumerate}
\item[$(1)$] $H(X)$ is finitely generated.
\item[$(2)$] The natural map $H_0(X) \to H_0(k \otimes^L_R X)$ is nonzero.
\item[$(3)$] $H^i_{\m}(X)=0$ if $i \ne \dim(R)$.
\end{enumerate}
\end{defn}

\begin{defn}\label{uacdef}
If $M$ and $N$ are $R$-modules, we set $$q_R(M,N):=\sup\{i \mid \Tor^R_i(M,N) \ne 0\}$$ and we set $$b_R(M,N):=\sup\{i \mid \Ext^i_R(M,N) \ne 0\}.$$ We set $$b_R:=\sup_{A,B}\{b_R(A,B) \mid b_R(A,B)<\infty\}$$ and call it the \emph{Auslander bound} of $R$. We say $R$ satisfies the \emph{Uniform Auslander Condition} $\uac$ if $b_R<\infty$. A Gorenstein ring satisfying $\uac$ is said to be $AB$.
\end{defn}

We introduce the following definition that serves as a dual version of $\uac$:

\begin{defn}\label{ubcdef}
We say $R$ satisfies the \emph{Uniform Buchweitz Condition} $\ubc$ if for all (nonzero) $R$-modules $M$ and $N$ with $b_R(M,N)<\infty$, then $b_R(M,N) \ge \codepth_R(M)$.
\end{defn}

\begin{remark}
It is perhaps unclear what is uniform about the $\
ubc$ condition, especially given that $\uac$ prescribes the existence of the finite bound $b_R$, which in theory may vary. The key point is that while one may try to define $\ubc$ to mean the existence of a value $a_R$ for which $b_R(M,N)<\infty$ implies $b_R(M,N) \ge a_R$, grade obstructions preclude its uniformity. Indeed, if $M$ is any module of finite projective dimension then $b_R(M,R)=\pd_R(M)=\codepth_R(M)$. The condition we have provided is thus the best one can expect to hold uniformly. 
\end{remark}

The following definitions serve as the main objects of study in this work.

\begin{defn}\label{depdef}

Let $R$ be a local ring. 

\begin{enumerate}

\item[$(1)$] We say $R$ satisfies $\ldep$ if for any finitely-generated modules $M$ and $N$ with $q_R(M,N)=0$, we have $\depth_R(M \otimes_R N)+\depth(R) \ge \depth_R(M)+\depth_R(N)$. Equivalently, we have $\codepth(M \otimes_R N) \le \codepth(M)+\codepth(N)$.

\item[$(2)$] We say $R$ satisfies $\rdep$ if for any finitely-generated modules $M$ and $N$ with $q_R(M,N)=0$, we have $\depth_R(M \otimes_R N)+\depth(R) \le \depth_R(M)+\depth_R(N)$. Equivalently, we have $\codepth(M \otimes_R N) \ge \codepth(M)+\codepth(N)$.

\item[$(3)$] We say $R$ satisfies $\dep$ if it satisfies both $\ldep$ and $\rdep$, i.e., if for any finitely-generated modules $M$ and $N$ with $q_R(M,N)=0$ we have $\codepth(M \otimes_R N)=\codepth(M)+\codepth(N)$. 
\end{enumerate}
\end{defn}

These conditions each admit a corresponding derived version:
\begin{defn}\label{deriveddepdef}

Let $R$ be a local ring. 

\begin{enumerate}

\item[$(1)$] We say $R$ satisfies \emph{derived $\ldep$} if for any complexes of $R$-modules $M$ and $N$ for which $M$, $N$ and $M \otimes^L_R N$ have bounded homology, we have $\depth_R(M \otimes^L_R N)+\depth(R) \ge \depth_R(M)+\depth_R(N)$. 

\item[$(2)$] We say $R$ satisfies \emph{derived $\rdep$} if for any complexes of $R$-modules $M$ and $N$ for which $M$, $N$ and $M \otimes^L_R N$ have bounded homology, we have $\depth_R(M \otimes^L_R N)+\depth(R) \le \depth_R(M)+\depth_R(N)$. 

\item[$(3)$] We say $R$ satisfies \emph{derived $\dep$} if it satisfies both derived $\ldep$ and derived $\rdep$, i.e. if for any complexes of $R$-modules $M$ and $N$ for which $M$, $N$ and $M \otimes^L_R N$ have bounded homology we have $\depth_R(M \otimes^L_R N)+\depth(R)=\depth_R(M)+\depth_R(N)$.
\end{enumerate}
\end{defn}

The following special case of the derived $\ldep$ and derived $\rdep$ conditions is also instrumental in our analysis.

\begin{defn}\label{deriveddepdefformodules}

Let $R$ be a local ring. 

\begin{enumerate}

\item[$(1)$] We say $R$ satisfies \emph{derived $\ldep$ for modules} if for any finitely-generated modules $M$ and $N$ with $q_R(M,N)<\infty$, we have $\depth_R(M \otimes^L_R N)+\depth(R) \ge \depth_R(M)+\depth_R(N)$. Equivalently, we have $\codepth(M \otimes^L_R N) \le \codepth(M)+\codepth(N)$.

\item[$(2)$] We say $R$ satisfies \emph{derived $\rdep$ for modules} if for any finitely-generated modules $M$ and $N$ with $q_R(M,N)<\infty$, we have $\depth_R(M \otimes^L_R N)+\depth(R) \le \depth_R(M)+\depth_R(N)$. Equivalently, we have $\codepth(M \otimes^L_R N) \ge \codepth(M)+\codepth(N)$.

\item[$(3)$] We say $R$ satisfies \emph{derived $\dep$ for modules} if it satisfies both derived $\ldep$ and derived $\rdep$, i.e., if for any finitely-generated modules $M$ and $N$ with $q_R(M,N)<\infty$ we have $\codepth(M \otimes^L_R N)=\codepth(M)+\codepth(N)$. 
\end{enumerate}
\end{defn}

\begin{remark}\label{derivedimplies}
It is easy to see that derived $\ldep$ implies derived $\ldep$ for modules which implies $\ldep$ and that derived $\rdep$ implies derived $\rdep$ for modules which implies $\rdep$. A consequence of the main results of this paper is that in nearly all situations the differing $\ldep$ conditions and the differing $\rdep$ conditions will turn out to be equivalent; see Theorems \ref{ldepthm} and \ref{rdepthm}. 
\end{remark}

A key source of examples of rings satisfying the $\ldep$ and $\rdep$ conditions is given by the following:

\begin{example}\label{tvexample}
If $R$ satisfies the trivial vanishing condition of \cite{LM20} \footnote{\noindent Trivial vanishing was essentially considered in \cite{JS04}, but only the weaker variant for $\Ext$. It is also called $\Tor$-friendly in \cite{AI22}.}, that is, if $q_R(M,N)<\infty$ implies that one of $M$ or $N$ has finite projective dimension, then $R$ satsfies derived $\dep$. Indeed, since one of $M$ or $N$ must have finite projective dimension when $q_R(M,N)<\infty$ in this situation, the claim follows from \cite[Corollary 2.2]{iyengar}. Known classes of rings satisfying trivial vanishing include Golod rings and rings with certain small numerics (see \cite[Theorem 3.1]{Jo99} and \cite[Section 4]{LM20}). 

We note also that any ring satisfying trivial vanishing must satisfy $\ubc$. Indeed, if $R$ satisfies trivial vanishing, and if $b_R(M,N)<\infty$, then by \cite[Theorem 3.2]{LM20}, we have that either $M$ has finite projective dimension or that $N$ has finite injective dimension. If $M$ has finite projective dimension then it is well-known that $\Ext^{\pd_R(M)}_R(M,N) \ne 0$, while $\pd_R(M)=\codepth_R(M)$ by the Auslander-Buchsbaum formula. If instead $\id_R(N)<\infty$ then $\Ext^{\codepth_R(M)}_R(M,N) \ne 0$ by e.g. \cite[Theorem 11.2]{LW12}, so $R$ satisfies $\ubc$.

\end{example}

We finally recall the notion of Gorenstein dimension. We write $(-)^*:=\Hom_R(-,R)$.
\begin{defn}\label{gdimdef}
An $R$-module $M$ is said to be \emph{totally reflexive} if the following conditions hold:
\begin{enumerate}
\item[$(1)$] $M$ is reflexive, i.e. the natural biduality map $M \to M^{**}$ is an isomorphism.
\item[$(2)$] $b_R(M,R)=0$.
\item[$(3)$] $b_R(M^*,R)=0$.
\end{enumerate}
\end{defn}

A question of Yoshino \cite{Yo05} originally asked whether condition $(2)$ of Definition \ref{gdimdef} was sufficient to imply the other two, but examples of Jorgensen-\c{S}ega show this is not the case \cite{JS06}. In light of this, we say $R$ satisfies the $\tr$ condition if $b_R(M,R)=0$ implies $M$ is totally reflexive for any $R$-module $M$. \footnote{The $\tr$ condition is a key focus of \cite{KL23} and is also called the $\GC$ condition in \cite{CH10}.}

\section{Preliminary Results}\label{prelimsection}
In this section we provide several results that will ultimately be instrumental for the proofs of the main results in future sections. Our focus in this work is the case where $R$ is Cohen-Macaulay (CM) and we will make this assumption throughout. We fix a maximal regular sequence $\underline{x}$ and we set $\tilde{(-)}:=\Omega^d_R(M/\underline{x}M)$ for ease of notation. When $R$ has a canonical module $\w_R$, we set $(-)^{\vee}:=\Hom_R(-,\w_R)$.

We begin by recording the following result of Iyengar which simply asserts that the familiar depth lemma holds for complexes:
\begin{prop}[{see \cite[Proposition 5.1]{iyengar}}]\label{depthlemma}
Let $0 \to X \to Y \to Z \to 0$ be a short exact sequence of $R$-complexes. Then we have the following inequalities:
\begin{enumerate}
\item[$(1)$] $\depth_R(Y) \ge \min\{\depth_R(X),\depth_R(Z)\}$.
\item[$(2)$] $\depth_R(Z) \ge \min\{\depth_R(X)-1,\depth_R(Y)\}$.
\item[$(3)$] $\depth_R(X) \ge \min\{\depth_R(Y),\depth_R(Z)+1\}$.
\end{enumerate}

\end{prop}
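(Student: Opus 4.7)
The natural plan is to reduce the statement to the familiar long exact sequence argument for modules by lifting everything to injective resolutions. First I would invoke the horseshoe lemma for injective resolutions of complexes to obtain a short exact sequence $0 \to I_X \to I_Y \to I_Z \to 0$ of bounded-below complexes of injective $R$-modules, where $I_X$, $I_Y$, $I_Z$ are injective resolutions of $X$, $Y$, $Z$ respectively, and where the sequence is degreewise split. Applying $\Hom_R(k,-)$ to this degreewise split short exact sequence preserves exactness, so we get a short exact sequence of complexes of abelian groups, and hence by Definition \ref{depthdef} a long exact sequence
\[
\cdots \to \Ext^i_R(k,X) \to \Ext^i_R(k,Y) \to \Ext^i_R(k,Z) \to \Ext^{i+1}_R(k,X) \to \cdots
\]
Now the depth of each complex is by definition the smallest $i$ with the corresponding $\Ext$ group nonvanishing, so each of the three inequalities becomes a routine diagram chase on this long exact sequence.

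Concretely, for (1), if $i < \min\{\depth_R(X),\depth_R(Z)\}$, then both $\Ext^i_R(k,X)$ and $\Ext^i_R(k,Z)$ vanish, forcing $\Ext^i_R(k,Y)=0$ by exactness; hence $\depth_R(Y) \ge \min\{\depth_R(X),\depth_R(Z)\}$. For (2), if $i < \min\{\depth_R(X)-1,\depth_R(Y)\}$, then $\Ext^i_R(k,Y)=0$ and $\Ext^{i+1}_R(k,X)=0$, so $\Ext^i_R(k,Z)=0$ by exactness at the $Z$-term. For (3), if $i < \min\{\depth_R(Y),\depth_R(Z)+1\}$, then $\Ext^i_R(k,Y)=0$ and $\Ext^{i-1}_R(k,Z)=0$, so exactness at the $X$-term forces $\Ext^i_R(k,X)=0$. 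In each case the boundary conditions are exactly what the three minima encode.

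The only subtle step is the first one: arranging a genuinely short exact sequence of injective resolutions to which one can apply $\Hom_R(k,-)$ and still obtain a long exact sequence. For complexes (as opposed to modules) this requires the Cartan--Eilenberg or total injective resolution machinery, and one must check that boundedness of homology is preserved under the constructions so that the resolutions are sufficiently well-behaved. I expect this bookkeeping to be the main technical obstacle; once in hand, the three depth inequalities are immediate from the long exact sequence as above. Alternatively, one can bypass the injective resolution issue entirely by using the identification $\depth_R(W) = \inf\{i \mid H^i_{\m}(W) \ne 0\}$ from \cite{FI03} and applying the local cohomology functor $\mathbf{R}\Gamma_{\m}(-)$ to the short exact sequence, which yields the same long exact sequence and hence the same three inequalities.
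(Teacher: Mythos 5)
Your argument is correct: the paper itself gives no proof of this proposition, simply citing Iyengar's Proposition 5.1, and your long exact sequence chase in $\Ext^i_R(k,-)$ is exactly the standard argument behind that result; all three inequalities follow from the segments of the long exact sequence precisely as you indicate. The only comment is that the ``subtle step'' you worry about can be dispatched more cheaply than via a horseshoe/Cartan--Eilenberg construction: a short exact sequence of complexes gives an exact triangle in the derived category, and applying $\RHom_R(k,-)$ (or $\mathbf{R}\Gamma_{\m}(-)$, using the equivalence from \cite{FI03} recorded in Definition \ref{depthdef}) to that triangle produces the long exact sequence directly, which is essentially the alternative you already note at the end.
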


We also note that we have the following familiar base independence property for depth of complexes:
\begin{prop}\label{baseindep}
Let $R$ be a local ring, let $S$ be a local $R$-algebra with maximal ideal $\m S$, and let $M$ be a complex of $S$-modules. Then $\depth_R(M)=\depth_{S}(M)$. In particular, we may take $S$ to be $\hat{R}$ or $R/I$ for any ideal $I$ in $R$.
\end{prop}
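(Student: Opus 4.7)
The plan is to exploit the local cohomology characterization of depth, which is independent of the ambient ring in a suitable sense. Under the hypothesis, the maximal ideal of $S$ is $\mathfrak{m}S$, so if $\mathfrak{m}=(x_1,\dots,x_n)$ then $(x_1,\dots,x_n)$ also generates the maximal ideal of $S$ up to radical.

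First I would recall that, by the discussion following Definition \ref{depthdef}, for any $R$-complex $X$ one has
\[
\depth_R(X)=\inf\{i\mid H^i_{\mathfrak{m}}(X)\ne 0\},
\]
and similarly, viewing $M$ as a complex of $S$-modules,
\[
\depth_S(M)=\inf\{i\mid H^i_{\mathfrak{m}S}(M)\ne 0\}.
\]
So the problem reduces to verifying that $H^i_{\mathfrak{m}}(M)\cong H^i_{\mathfrak{m}S}(M)$ for every $i$.

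Next I would invoke the \v{C}ech-complex description of local cohomology. If $\mathfrak{m}=(x_1,\dots,x_n)$, then the \v{C}ech complex $\check{C}(x_1,\dots,x_n;-)$ is functorial in the underlying abelian-group structure: for any complex $M$ the object $\check{C}(x_1,\dots,x_n;M)$ depends only on the elements $x_i$ and on $M$ as an abelian group, with the action of whichever ring is in sight coming for free. Computing this complex, one obtains $H^i_{\mathfrak{m}}(M)$ when $M$ is viewed over $R$ and $H^i_{\mathfrak{m}S}(M)$ when it is viewed over $S$, since $(x_1,\dots,x_n)S=\mathfrak{m}S$. The two outputs therefore agree as abelian groups (in fact as $S$-modules), and hence the infima of indices where they are nonzero coincide, giving $\depth_R(M)=\depth_S(M)$.

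Finally, for the ``in particular'' clause, it suffices to observe that both $\hat R$ and $R/I$ are local $R$-algebras whose maximal ideals are respectively $\mathfrak{m}\hat R$ and $\mathfrak{m}(R/I)=\mathfrak{m}/I$, so the general statement applies directly. The only subtlety in the argument is the identification of the \v{C}ech cohomologies, but this is standard and essentially formal, so I do not expect a serious obstacle; the main care needed is to ensure that the isomorphism $H^i_{\mathfrak{m}}(M)\cong H^i_{\mathfrak{m}S}(M)$ is valid for unbounded complexes as well, which is immediate from the totalization of the (bounded) \v{C}ech complex.
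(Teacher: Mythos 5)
Your proposal is correct, but it follows a different route from the paper's own proof. You use the local cohomology characterization of depth from Definition \ref{depthdef} and identify $H^i_{\m}(M)$ with $H^i_{\m S}(M)$ via the \v{C}ech complex on generators $x_1,\dots,x_n$ of $\m$, the key observation being that localizing the $S$-complex $M$ at $x_i\in R$ is the same as localizing at its image in $S$, since the $R$-action factors through $S$. The paper instead takes a minimal generating set of $\m$, forms the Koszul complexes $K_R$ over $R$ and $K_S$ over $S$ on the images, notes the chain-level isomorphism $K_R\otimes_R M\cong (K_R\otimes_R S)\otimes_S M\cong K_S\otimes_S M$, and quotes \cite[Theorem I]{FI03}, which says depth is detected by Koszul homology. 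The two arguments share the same mechanism (base-change a bounded complex of flat modules built on generators of $\m$, then use that $M$ is already an $S$-complex), but the Koszul version is an honest isomorphism of finite complexes and needs nothing beyond the cited theorem, whereas your version additionally requires identifying the derived $\m$-torsion functor (which the paper defines via injective resolutions) with tensoring by the \v{C}ech complex, uniformly in possibly unbounded complexes; this is standard (weak proregularity for Noetherian rings, K-injective resolutions), but it is precisely the step you dismiss as ``essentially formal'' rather than justify, and it is the step the paper's argument avoids. A minor point: the hypothesis gives that $(x_1,\dots,x_n)S$ equals the maximal ideal of $S$ on the nose, not merely up to radical, although radical-equality is all that local cohomology needs.
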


\begin{proof}
Let $y_1,\dots,y_n$ be a minimal generating set for $\m$, let $K_R$ be the Koszul complex on this generating set over $R$, and let $K_{S}$ be the Koszul complex on the images of the $y_i$ in $S$, so $K_{S} \cong K_R \otimes_R S$. Then $K_R \otimes_R M \cong K_R \otimes_R (S \otimes_{S} M) \cong (K_R \otimes_R S) \otimes_{S} M \cong K_{S} \otimes_{S} M$, and the claim follows from \cite[Theorem I]{FI03}.
\end{proof}

Going forward, we will use Proposition \ref{baseindep} when it applies tacitly and without reference.

The next Proposition extends to a key class of complexes the familiar fact for modules that $M$ is MCM if and only if $\depth_R(M) \ge \dim(R)$.

\begin{prop}\label{mcmcomplex}
Let $X$ be a complex of $R$-modules such that $\inf\{i \mid H_i(X) \ne 0\}=0$ and $H(X)$ is bounded. Then $X$ is MCM if and only if $\depth_R(X) \ge \dim(R)$. 
\end{prop}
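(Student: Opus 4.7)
The forward direction is essentially unpacking the definition. If $X$ is MCM, then by clause $(3)$ of Definition~\ref{mcmdef}, $H^i_{\m}(X)=0$ for every $i\ne d:=\dim R$, so by the defining formula $\depth_R(X)=\inf\{i\mid H^i_{\m}(X)\ne 0\}$ the depth is either $d$ (if $H^d_{\m}(X)\ne 0$) or $\infty$; in either case $\depth_R(X)\ge d$.

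The backward direction requires verifying the three clauses of Definition~\ref{mcmdef} under the hypotheses $\inf\{i\mid H_i(X)\ne 0\}=0$, $H(X)$ bounded, and $\depth_R(X)\ge d$. Clause $(1)$ is immediate from the blanket finite-generation convention of Section~\ref{background}. For clause $(2)$, I would pass to a minimal semifree resolution $P\xrightarrow{\simeq}X$ with $P_i=0$ for $i<0$ and $\partial(P)\subseteq\m P$; such a resolution exists because $X$ is bounded below with finitely generated homology. Then $H_0(k\otimes^L_R X)\cong H_0(k\otimes_R P)=k\otimes_R P_0$ (the differential dies after tensoring with $k$), and the natural map $H_0(X)=H_0(P)\to H_0(k\otimes_R P)$ is exactly the surjection $P_0/\im\partial_1\twoheadrightarrow P_0/\m P_0$. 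Since $\inf\{i\mid H_i(X)\ne 0\}=0$ forces $H_0(X)\ne 0$, hence $P_0\ne 0$, this surjection is nonzero.

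Clause $(3)$ is the technical heart. The assumption $\depth_R(X)\ge d$ yields $H^i_{\m}(X)=0$ for $i<d$, so the only remaining task is to show vanishing for $i>d$. For this I would invoke the standard convergent Grothendieck-type spectral sequence
\[
E_2^{p,q}=H^p_{\m}\!\bigl(H^q(X)\bigr)\ \Longrightarrow\ H^{p+q}_{\m}(X),
\]
where cohomological indexing is $H^q(X)=H_{-q}(X)$. By Grothendieck vanishing applied to each finitely generated module $H^q(X)$, the page $E_2^{p,q}$ vanishes whenever $p>d$. Moreover, the hypothesis $\inf\{i\mid H_i(X)\ne 0\}=0$ gives $H^q(X)=0$ for $q>0$, so $E_2^{p,q}=0$ in that half-plane as well. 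Therefore $E_2^{p,q}$ can only be nonzero in the region $0\le p\le d$ and $q\le 0$, whence the total degree $p+q$ never exceeds $d$ on nonzero entries, giving $H^n_{\m}(X)=0$ for all $n>d$. Combined with the vanishing for $i<d$, this is exactly clause $(3)$.

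The only mildly subtle point is making sure the bounded-below spectral sequence is available in the form stated; I would either cite a standard reference (e.g., the hyperderived functor spectral sequence for $\Gamma_{\m}$) or obtain it via induction on the length of $H(X)$ using the soft truncation triangles and the depth lemma (Proposition~\ref{depthlemma}) applied to $H^*_{\m}$. I expect this index-bookkeeping for clause $(3)$ to be the main obstacle, whereas clauses $(1)$ and $(2)$ and the forward direction are essentially formal.
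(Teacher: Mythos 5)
Your proof is correct, but the key step (vanishing of $H^i_{\m}(X)$ for $i>\dim(R)$) is handled by a different device than in the paper. The paper takes an injective resolution $I$ of $X$, observes that since $H_i(X)=0$ for $i<0$ the truncated complex $0 \to \ker(\partial^I_0) \to I_0 \to I_{-1} \to \cdots$ is acyclic, hence identifies $H^i_{\m}(X)$ for $i>0$ with $H^i_{\m}(\ker(\partial^I_0))$ for a single module, and then applies Grothendieck vanishing once; this avoids any spectral sequence (and note that $\ker(\partial^I_0)$ need not be finitely generated, so one uses the version of vanishing above $\dim(R)$ valid for arbitrary modules, e.g.\ via the \v{C}ech complex on a system of parameters). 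You instead run the hypercohomology spectral sequence $E_2^{p,q}=H^p_{\m}(H^q(X)) \Rightarrow H^{p+q}_{\m}(X)$, which converges here because $H(X)$ is bounded, and conclude from the vanishing of $E_2^{p,q}$ for $p>d$ and for $q>0$; your fallback via soft truncations and Proposition~\ref{depthlemma} would also work. Both arguments rest on the same two inputs (Grothendieck vanishing and $H_{<0}(X)=0$), so the difference is mainly one of packaging: the paper's truncation trick is shorter and more elementary, while your write-up has the merit of explicitly verifying clauses $(1)$ and $(2)$ of Definition~\ref{mcmdef} — in particular the minimal semifree resolution argument showing $H_0(X)\to H_0(k\otimes^L_R X)$ is nonzero — which the paper leaves implicit in the phrase ``it suffices to show $H^i_{\m}(X)=0$ for $i>\dim(R)$.''
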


\begin{proof}
The forward direction holds by definition. For the converse, it suffices to show that $H^i_{\m}(X)=0$ for $i>\dim(R)$. Let $I$ be an injective resolution of $X$. Since $H_i(X)=0$ when $i<0$, the complex 
\[0 \to \ker(\partial^I_0) \rightarrow I_0 \xrightarrow{\partial^I_0} I_{-1} \xrightarrow{\partial^I_{-1}} \cdots\]
is acyclic. Then for $i>0$, $H^i_{\m}(X)=H_{-i}(\Gamma_{\m}(I))=H^i_{\m}(\ker(\partial^I_0))$. In particular, $H^i_{\m}(X)=0$ for $i>\dim(R)$ by e.g. \cite[Theorem 3.5.6]{BH93}. 
\end{proof}

A key consequence of Proposition \ref{depthlemma} is the following:
\begin{prop}\label{replacesyz}
Suppose $M$ and $N$ are $R$-modules. Then $\codepth_R(M \otimes^L_R N) \le \codepth_R(M)+\codepth_R(N)$ if and only if $\Omega^{\codepth_R(M)}(M) \otimes^L_R \Omega^{\codepth_R(N)}(N)$ is MCM.
\end{prop}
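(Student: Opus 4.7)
The plan is to iteratively replace $M$ and $N$ with first syzygies, use the depth lemma (Proposition \ref{depthlemma}) to track $\codepth_R$ of the derived tensor product, and conclude via Proposition \ref{mcmcomplex}. Write $a := \codepth_R(M)$ and $b := \codepth_R(N)$, and fix free covers giving short exact sequences $0 \to \Omega M \to P_0 \to M \to 0$ and $0 \to \Omega N \to Q_0 \to N \to 0$. A standard application of Proposition \ref{depthlemma} yields $\codepth_R(\Omega^i M) = a - i$ for $0 \le i \le a$ and similarly for $N$, so $\Omega^a M$ and $\Omega^b N$ are MCM, and both are nonzero since $M, N \ne 0$.

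The key step is the following reduction: for $a \ge 1$, $\codepth_R(M \otimes^L_R N) \le a + b$ if and only if $\codepth_R(\Omega M \otimes^L_R N) \le a + b - 1$. To see this, apply $- \otimes^L_R N$ to the short exact sequence $0 \to \Omega M \to P_0 \to M \to 0$ to obtain a distinguished triangle $\Omega M \otimes^L_R N \to N^{\oplus r} \to M \otimes^L_R N$ whose middle term has $\codepth_R$ equal to $b$. Translating parts (2) and (3) of Proposition \ref{depthlemma} into codepth gives the two inequalities $\codepth_R(M \otimes^L_R N) \le \max(b, \codepth_R(\Omega M \otimes^L_R N) + 1)$ and $\codepth_R(\Omega M \otimes^L_R N) \le \max(b, \codepth_R(M \otimes^L_R N) - 1)$, and when $a \ge 1$ the identities $\max(b, a+b) = a+b$ and $\max(b, a+b-1) = a+b-1$ immediately yield the biconditional.

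Iterating this reduction $a$ times on the first factor and $b$ times symmetrically on the second yields the equivalence of $\codepth_R(M \otimes^L_R N) \le a + b$ with $\codepth_R(\Omega^a M \otimes^L_R \Omega^b N) \le 0$. Since $H_0(\Omega^a M \otimes^L_R \Omega^b N) = \Omega^a M \otimes_R \Omega^b N$ is nonzero over the local ring $R$, we have $\inf\{i \mid H_i(\Omega^a M \otimes^L_R \Omega^b N) \ne 0\} = 0$, so Proposition \ref{mcmcomplex} identifies the condition $\codepth \le 0$ with the MCM condition, completing the argument. The only subtle point to watch is the boundedness hypothesis of Proposition \ref{mcmcomplex}: if $q_R(M,N) = \infty$ then $H(\Omega^a M \otimes^L_R \Omega^b N)$ is unbounded and the complex fails to be MCM, but in that case the codepth inequality on the left also fails, so the biconditional still holds; otherwise the argument is a purely mechanical unwinding of the depth lemma.
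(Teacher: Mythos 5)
Your proof is correct and follows essentially the same route as the paper: tensor the syzygy short exact sequences coming from free covers against the other factor, track (co)depth through Proposition \ref{depthlemma}, and identify $\codepth\le 0$ with the MCM condition via Proposition \ref{mcmcomplex}. You are in fact slightly more explicit than the paper about the max/min bookkeeping and the degenerate case $q_R(M,N)=\infty$, which the paper's proof passes over silently.
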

\begin{proof}
Let $F^M$ and $F^N$ be a minimal free resolution of $M$ and $N$ respectively. Applying $- \otimes^L_R N$ to the short exact sequences \[0 \to \Omega^i_R(M) \to F^M_{i-1} \to \Omega^{i-1}_R(M) \to 0\]
for $1 \le i \le \codepth_R(M)$, we have short exact sequences
\[0 \to \Omega^i_R(M) \otimes^L_R N \to F^M_{i-1} \otimes^L_R N \to \Omega^{i-1}_R(M) \otimes^L_R N \to 0.\]
Similarly, we have short exact sequences
\[0 \to \Omega^{\codepth_R(M)}_R(M) \otimes^L_R \Omega^j_R(N) \to \Omega^{\codepth_R(M)}(M) \otimes^L_R F^N_{j-1} \to \Omega^{\codepth_R(M)}(M) \otimes^L_R \Omega^{j-1}_R(N) \to 0\]
for $1 \le j \le \codepth_R(N)$.
The claim now follows from Proposition \ref{depthlemma}.
\end{proof}

\begin{lem}\label{torextdual}
Suppose $M$ and $N$ are $R$-modules with $N$ and $M \otimes_R \Ext^{d-n}_R(N,\w_R)$ CM of dimension $n$. If $\Tor^R_{1 \le j \le i}(M,\Ext^{d-n}_R(N,\w_R))=0$ for some $i$, then $\Ext^{1 \le j \le i}_R(M,N)=0$.    
\end{lem}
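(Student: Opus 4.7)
Plan. Write $N' := \Ext^{d-n}_R(N, \w_R)$; by the CM hypothesis on $N$, $N'$ is itself CM of dimension $n$ with canonical isomorphism $\Ext^{d-n}_R(N', \w_R) \cong N$. The core identity driving the proof is
\[
\Ext^k_R(M, N) \cong \Ext^{k + (d-n)}_R(M \otimes^L_R N', \w_R)
\]
for all $k \ge 0$, which follows from the adjunction $\RHom_R(M \otimes^L_R N', \w_R) \simeq \RHom_R(M, \RHom_R(N', \w_R))$ together with the quasi-isomorphism $\RHom_R(N', \w_R) \simeq N[-(d-n)]$ coming from the CM structure on $N$. Thus the task reduces to showing $\Ext^t_R(M \otimes^L_R N', \w_R) = 0$ for $d-n+1 \le t \le d-n+i$.

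I would proceed by induction on $i$, the base case $i = 0$ being vacuous. For the inductive step, choose a short exact sequence $0 \to K \to F \to M \to 0$ with $F$ free. The long exact sequence of $\Tor(-, N')$ together with the hypothesis $\Tor^R_1(M, N') = 0$ produces a short exact sequence $0 \to K \otimes_R N' \to F \otimes_R N' \to M \otimes_R N' \to 0$, and the depth lemma (Proposition \ref{depthlemma}) ensures $K \otimes_R N'$ is either zero (in which case $M$ is free and we are done) or CM of dimension $n$; it also yields $\Tor^R_j(K, N') \cong \Tor^R_{j+1}(M, N') = 0$ for $1 \le j \le i-1$. The inductive hypothesis then gives $\Ext^j_R(K, N) = 0$ for $1 \le j \le i - 1$, and dimension shifting via the $\Ext$ long exact sequence of the original SES (using $\Ext^{\ge 1}_R(F, N) = 0$) gives $\Ext^j_R(M, N) = 0$ for $2 \le j \le i$, reducing matters to $\Ext^1_R(M, N) = 0$.

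For this last case, apply $\Ext^{d-n}_R(-, \w_R)$ to the short exact sequence of CM modules $0 \to K \otimes_R N' \to F \otimes_R N' \to M \otimes_R N' \to 0$; since each term is CM of dimension $n$, only the $\Ext^{d-n}$ term is non-vanishing, producing a short exact sequence $0 \to \Ext^{d-n}_R(M \otimes_R N', \w_R) \to N^r \to \Ext^{d-n}_R(K \otimes_R N', \w_R) \to 0$, with $\Ext^{d-n}_R(F \otimes_R N', \w_R) = N^r$ ($r = \rank F$) coming from the core identity. Comparing this with the $\Hom_R(-, N)$ long exact sequence of $0 \to K \to F \to M \to 0$ via the natural identifications $\Hom_R(A, N) \cong \Ext^{d-n}_R(A \otimes_R N', \w_R)$, which follow from the core identity in degree zero whenever $\Tor^R_{\ge 1}(A, N') = 0$, gives the surjectivity $N^r = \Hom_R(F, N) \twoheadrightarrow \Hom_R(K, N)$ and hence $\Ext^1_R(M, N) = 0$. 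The main obstacle is justifying the identification $\Hom_R(K, N) \cong \Ext^{d-n}_R(K \otimes_R N', \w_R)$ in the non-MCM case $d > n$, where $\Tor^R_{\ge 1}(K, N')$ may not vanish; handling this requires controlling the extra potentially nonzero terms in the hyper-$\Ext$ spectral sequence $E_2^{p,q} = \Ext^p_R(\Tor^R_q(K,N'),\w_R) \Rightarrow \Ext^{p-q-(d-n)}_R(K,N)$ using naturality of edge maps and the vanishing $\Ext^{<0}_R(K, N) = 0$, which forces cancellations on low diagonals of the abutment.
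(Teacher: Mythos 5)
Your reduction to the single vanishing $\Ext^1_R(M,N)=0$ is fine (writing $N':=\Ext^{d-n}_R(N,\w_R)$, the syzygy $K\otimes_R N'$ is indeed CM of dimension $n$ or zero, and the dimension shifts in $\Tor$ and $\Ext$ are correct), but the argument is not complete at exactly the point you flag. The final step hinges on identifying $\Ext^{d-n}_R(K\otimes_R N',\w_R)$ with $\Hom_R(K,N)$ compatibly with restriction along $K\hookrightarrow F$, and your only route to this is the derived isomorphism $\RHom_R(A\otimes^L_R N',\w_R)\simeq \RHom_R(A,N)[-(d-n)]$, which identifies $\Ext^{d-n}_R(A\otimes_R N',\w_R)$ with $\Hom_R(A,N)$ only when $A\otimes^L_R N'\simeq A\otimes_R N'$, i.e.\ when $\Tor^R_{\ge 1}(A,N')=0$. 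For $A=K$ this is not available: $\Tor^R_j(K,N')\cong\Tor^R_{j+1}(M,N')$ is uncontrolled for $j\ge i$. The proposed spectral-sequence repair is not carried out, and as sketched it is not automatic: on the total-degree-$(d-n)$ diagonal the terms $\Ext^{d-n+q}_R(\Tor^R_q(K,N'),\w_R)$ with $q\ge 1$ need not vanish, and the vanishing $\Ext^{<0}_R(K,N)=0$ only constrains strictly smaller total degrees, so it does not by itself produce the asserted cancellations or show the relevant edge map is an isomorphism. As written, the key surjectivity $\Hom_R(F,N)\twoheadrightarrow\Hom_R(K,N)$ is therefore not established.

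The gap is closable, and the fix is simpler than your derived detour -- it is what the paper does. The identification holds unconditionally at the underived level: choose a regular sequence $\underline{s}$ of length $d-n$ inside $\Ann_R(N)$, so $\underline{s}$ kills $N'$ and every $A\otimes_R N'$. The change-of-rings isomorphism $\Ext^{d-n}_R(L,\w_R)\cong\Hom_{R/\underline{s}}(L,\w_{R/\underline{s}})$ for $R/\underline{s}$-modules $L$ (cf.\ \cite[Lemma 3.1.16]{BH93}, as recalled before Proposition \ref{transposeextend}), ordinary Hom-tensor adjointness, and the biduality $\Hom_{R/\underline{s}}(N',\w_{R/\underline{s}})\cong N$ give a natural isomorphism $\Ext^{d-n}_R(A\otimes_R N',\w_R)\cong\Hom_R(A,N)$ for \emph{every} finitely generated $A$, with no Tor-vanishing hypothesis; this closes your $\Ext^1$ step. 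It also renders the induction unnecessary: as in the paper's proof, the hypothesis yields an exact sequence $F_{i+1}\otimes_R N'\to\cdots\to F_0\otimes_R N'\to M\otimes_R N'\to 0$ whose terms (and, by the depth lemma, whose kernels) are CM of dimension $n$, so applying $\Ext^{d-n}_R(-,\w_R)$ preserves exactness, and under the natural identification the dual complex is $0\to\Hom_R(M,N)\to\Hom_R(F_0,N)\to\cdots\to\Hom_R(F_{i+1},N)$, whose exactness gives $\Ext^{1\le j\le i}_R(M,N)=0$ all at once.
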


\begin{proof}
If $L$ is a CM $R$-module, we set $L^{\dagger}:=\Ext^{\codepth_R(L)}_R(L,\w_R)$. Let $F:=F_{i+1} \rightarrow F_i \rightarrow \cdots F_0 \rightarrow 0$ be part of a free resolution of $M$. Applying $- \otimes_R N^{\dagger}$, we get, since $\Tor^R_{1 \le j \le i}(M,N^{\dagger})=0$, an exact sequence $F \otimes_R N^{\dagger} \rightarrow M \otimes_R N^{\dagger} \rightarrow 0$. Since each term of this sequence is CM of dimension $n$, the sequence $0 \rightarrow (M \otimes_R N^{\dagger})^{\dagger} \rightarrow (F \otimes_R N^{\dagger})^{\dagger}$ is exact. But this sequence is isomorphic to $0 \rightarrow \Hom_R(M,N) \rightarrow \Hom_R(F,N)$ whose homologies are $\Ext^j_R(M,N)$ for $1 \le j \le i$, so $\Ext^{1 \le j \le i}_R(M,N)=0$. 
\end{proof}

For our next result, we recall that the Auslander Transpose $\Tr_R(M)$ is defined as $\coker(A^T)$ where $A$ is a minimal presentation matrix for $M$. 
If $L$ is a CM $R$-module, we set $L^{\dagger}:=\Ext^{\codepth_R(L)}_R(L,\w_R)$. See \cite[Section 3.3]{BH93} as a reference for canonical duality in this level of generality. In particular, we note that if $\underline{s}$ is a maximal regular sequence in the annihilator of $L$, the $L^{\dagger}$ can be identified with $\Hom_{R/\underline{s}}(L,\w_{R/\underline{s}})$; see e.g. \cite[Lemma 3.1.16]{BH93}. The following directly extends \cite[Theorem 2.2]{KO22} by relaxing the MCM requirements.
\begin{prop}\label{transposeextend}
Suppose $M$ and $N$ are $R$-modules and that $N$ is CM of dimension $n$. Then $M \otimes_R N$ is CM of dimension $n$ if and only if $\Tr_R(M) \otimes_R \Ext^{d-n}_R(N,\w_R)$ is CM of dimension $n$. 
\end{prop}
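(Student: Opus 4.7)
The plan is to reduce the statement to the MCM case, which is precisely \cite[Theorem 2.2]{KO22}, by quotienting out a suitable regular sequence. Since $N$ is CM of dimension $n$ over the CM ring $R$ of dimension $d$, the ideal $\Ann_R N$ has grade $d-n$; choose a maximal regular sequence $\underline{s}=s_1,\dots,s_{d-n}$ lying inside $\Ann_R N$ and set $S:=R/(\underline{s})$. Then $S$ is CM of dimension $n$, the canonical module is $\w_S\cong \w_R/\underline{s}\w_R$, and $N$ becomes an $S$-module that is MCM over $S$. Using the identification recalled before the statement (from \cite[Lemma 3.1.16]{BH93}), one has
\[N^\dagger := \Ext^{d-n}_R(N,\w_R) \cong \Hom_S(N,\w_S),\]
which is the canonical dual of $N$ over $S$ and is in particular itself MCM over $S$.

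The key observation is that both tensor products in the statement live naturally over $S$: since $N$ and $N^\dagger$ are annihilated by $\underline{s}$,
\[M\otimes_R N \cong \bar{M}\otimes_S N, \qquad \Tr_R M\otimes_R N^\dagger \cong \overline{\Tr_R M}\otimes_S N^\dagger,\]
where $\bar{(-)}$ denotes reduction modulo $\underline{s}$. Because $\dim S=n$ and depth is base-independent (Proposition \ref{baseindep}), a finitely generated $S$-module is CM of dimension $n$ as an $R$-module if and only if it is MCM as an $S$-module. Thus the proposition reduces to the claim that $\bar{M}\otimes_S N$ is MCM over $S$ if and only if $\Tr_S \bar{M}\otimes_S N^\dagger$ is MCM over $S$, which is exactly the MCM version of the theorem, applied to the $S$-modules $\bar{M}$ and $N$.

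To close the gap we must identify $\overline{\Tr_R M}\otimes_S N^\dagger$ with $\Tr_S \bar{M}\otimes_S N^\dagger$ up to behavior that preserves the MCM property. Starting from a minimal $R$-presentation $P_1\xrightarrow{A} P_0\to M\to 0$, the induced $S$-presentation $\bar{P}_1\xrightarrow{\bar{A}}\bar{P}_0\to\bar{M}\to 0$ may fail to be minimal, but the natural isomorphism $\Hom_R(P,R)\otimes_R S\cong \Hom_S(\bar{P},S)$ for free $R$-modules $P$ gives $\overline{\Tr_R M}\cong \coker(\bar{A}^T)$. Since any two presentations of $\bar{M}$ over $S$ produce transposes that differ only by free $S$-summands, $\overline{\Tr_R M}$ and $\Tr_S \bar{M}$ agree up to direct sums of copies of $S$; tensoring with $N^\dagger$ alters the answer by direct sums of copies of the MCM $S$-module $N^\dagger$, which does not affect the MCM property. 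The main obstacle in writing this out is carefully tracking this transpose-under-base-change comparison and verifying that the canonical dual computed over $R$ genuinely coincides with the one computed over $S$; once these two bookkeeping steps are in place, the MCM case supplies the desired equivalence.
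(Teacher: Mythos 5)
Your argument is a genuinely different route from the paper's. The paper never passes to a quotient ring and never uses \cite{KO22} in the proof: it works directly over $R$, splicing the dualized minimal presentation of $M$ into the four-term exact sequence $0 \to \Hom_R(M,N^{\dagger}) \to (N^{\dagger})^{\oplus m} \to (N^{\dagger})^{\oplus n} \to \Tr_R(M)\otimes_R N^{\dagger} \to 0$ and, dually, $0 \to (\Tr_R(M)\otimes_R N^{\dagger})^{\dagger} \to N^{\oplus n} \to N^{\oplus m} \to M\otimes_R N \to 0$, then uses the depth lemma and double canonical duality for CM modules of dimension $n$ to pass between them; the converse is obtained by applying the forward implication to the pair $(\Tr_R(M),N^{\dagger})$ together with $M\otimes_R N \cong (\Tr_R\Tr_R(M)\otimes_R N)\oplus(F\otimes_R N)$. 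Your reduction mechanics, by contrast, are the expected ones and are sound: the regular sequence of length $d-n$ inside $\Ann_R(N)$ exists because $R$ is CM, depth and dimension are insensitive to the base change $R \to S=R/(\underline{s})$, the identification $\Ext^{d-n}_R(N,\w_R)\cong\Hom_S(N,\w_S)$ is exactly the one the paper records from \cite[Lemma 3.1.16]{BH93}, and the transpose computed from the induced (possibly non-minimal) presentation of $\bar{M}$ agrees with $\Tr_S(\bar{M})$ up to free direct summands, which tensoring with the MCM module $N^{\dagger}$ cannot disturb. What each approach buys: yours is shorter and makes transparent that the statement is "the MCM statement over a hypersurface-type section ring," while the paper's is self-contained and proves the general statement without leaning on any external theorem.

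The one point you must pin down is the scope of \cite[Theorem 2.2]{KO22}, because your proof outsources to it precisely the case $n=d$ with $N$ MCM and $M$ an \emph{arbitrary} finitely generated module. The paper only says the proposition "directly extends \cite[Theorem 2.2]{KO22} by relaxing the MCM requirements," a phrasing consistent with that theorem assuming both modules, in particular $M$, are MCM. After your reduction, $\bar{M}=M/\underline{s}M$ is a completely general $S$-module, so if the cited theorem requires the first module to be MCM, your argument is incomplete (indeed circular) at exactly the case it quotes, and you would need to prove the arbitrary-$M$, MCM-$N$ case yourself --- for instance by the paper's splicing/duality argument, which works verbatim with $n=d$. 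If the citation does cover arbitrary $M$, then your proof is complete as written, modulo the bookkeeping you already flagged.
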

\begin{proof}
Set $(-)^{\dagger}:=\Ext^{d-n}_R(-,\w_R)$. Let $\epsilon:R^{\oplus n} \xrightarrow{A} R^{\oplus m} \rightarrow M \rightarrow 0$ be a minimal presentation of $M$. Dualizing into $R$ and applying $- \otimes_R N^{\dagger}$ there is an exact sequence of the form $\delta:(N^{\dagger})^{\oplus m} \xrightarrow{A^T} (N^{\dagger})^{\oplus n} \rightarrow \Tr_R(M) \otimes_R N^{\dagger} \rightarrow 0$, while applying $\Hom_R(-,N^{\dagger})$ to $\epsilon$ instead give an exact sequence $0 \to \Hom_R(M,N^{\dagger}) \to (N^{\dagger})^{\oplus m} \xrightarrow{A^T} (N^{\dagger})^{\oplus n}$. Splicing these, we have an exact sequence
\[(*):0 \rightarrow \Hom_R(M,N^{\dagger}) \rightarrow (N^{\dagger})^{\oplus m} \xrightarrow{A^T} (N^{\dagger})^{\oplus n} \rightarrow \Tr(M) \otimes N^{\dagger} \rightarrow 0.\]
Similarly, tensoring  $\epsilon$ with $N$ gives an exact sequence $N^{\oplus n} \xrightarrow{A} N^{\oplus m} \rightarrow M \otimes_R N \rightarrow 0$ and applying $(-)^{\dagger}$ to $\delta$ gives an exact sequence $0 \rightarrow (\Tr_R(M) \otimes_R N^{\dagger})^{\dagger} \rightarrow N^{\oplus n} \xrightarrow{A} N^{\oplus m}$. Splicing yields and exact sequence
\[(**):0 \rightarrow (\Tr_R(M) \otimes_R N^{\dagger})^{\dagger} \rightarrow N^{\oplus n} \xrightarrow{A} N^{\oplus m} \rightarrow M \otimes_R N \rightarrow 0.\]

Suppose first that $M \otimes_R N$ is CM of dimension $n$. Then as all terms of $(**)$ are CM of dimension $n$ by the depth lemma, we may apply $(-)^{\dagger}$ to obtain an exact sequence
\[0 \rightarrow (M \otimes_R N)^{\dagger} \rightarrow (N^{\dagger})^{\oplus m} \xrightarrow{A^T} (N^{\dagger})^{\oplus n} \rightarrow (\Tr_R(M) \otimes_R N^{\dagger})^{\dagger \dagger} \rightarrow 0\] whose terms are all also CM of dimension $n$. Comparing with $(*)$, it follows that $\Tr_R(M) \otimes_R N^{\dagger} \cong (\Tr_R(M) \otimes_R N^{\dagger})^{\dagger \dagger}$, and is thus CM of dimension $n$.

So we have shown that if $M \otimes_R N$ is CM of dimension $n$, then $\Tr_R(M) \otimes_R N^{\dagger}$ is CM of dimension $n$. Then if $\Tr_R(M) \otimes_R N^{\dagger}$ is CM of dimension $n$, it follows that $\Tr_R(\Tr_R(M)) \otimes_R N^{\dagger \dagger} \cong \Tr_R(\Tr_R(M)) \otimes_R N$ is CM of dimension $n$. But $M \otimes_R N \cong (\Tr_R(\Tr_R(M)) \otimes_R N) \oplus (F \otimes_R N)$ for some free $R$-module $F$ by e.g. \cite[Proposition 2.2 (4)]{DT15}, so is also CM of dimension $n$, completing the proof.
\end{proof}

Using Proposition \ref{transposeextend}, we can give a fairly broad extension of \cite[Lemma 5.3]{DE21} (cf. \cite[Lemma 3.4]{LM20}) that will be needed in later sections.

\begin{prop}\label{mcmext}
Suppose $R$ is CM with canonical module $\w_R$ and let $M$ and $N$ be $R$-modules with $N$ CM of dimension $n$. If $\Ext^{1 \le i \le n}_R(M,N)=0$, then $M \otimes_R \Ext^{d-n}_R(N,\w_R)$ is CM of dimension $n$. The converse holds if either $\Ext^{i}_R(M,N)$ has finite length for $1 \le i \le n$ or $\Tor^R_i(M,\Ext^{d-n}_R(N,\w_R))$ has finite length for $1 \le i \le n$. 
\end{prop}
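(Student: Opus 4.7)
The plan is to reduce the statement to a condition on $\Tr(M) \otimes_R N$ via Proposition \ref{transposeextend}. Setting $N^\dagger := \Ext^{d-n}_R(N, \omega_R)$, which is CM of dimension $n$ with $N^{\dagger\dagger} \cong N$, Proposition \ref{transposeextend} applied to $M$ and $N^\dagger$ gives that $M \otimes_R N^\dagger$ is CM of dimension $n$ if and only if $\Tr(M) \otimes_R N$ is.

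For the forward direction, I would take a minimal free resolution $F_\bullet \to M$ with $b_i = \rank F_i$ and work with the complex $\Hom_R(F_\bullet, N) \cong F_\bullet^{\ast} \otimes_R N$, whose differentials $\partial_i \colon N^{b_{i-1}} \to N^{b_i}$ have $i$th cohomology $\Ext^i_R(M, N)$; in particular $\Tr(M) \otimes_R N = \coker(\partial_1)$. Assuming $n \ge 2$ (the cases $n \le 1$ being direct), iterated application of the first isomorphism theorem under the vanishing $\Ext^{1 \le i \le n}_R(M, N) = 0$ identifies $\Tr(M) \otimes_R N \cong \ker(\partial_3) \subseteq N^{b_2}$. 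The key observation is that $N^{b_n}/\ker(\partial_{n+1}) \cong \im(\partial_{n+1}) \hookrightarrow N^{b_{n+1}}$, as a submodule of a module of depth $n \ge 1$, has depth at least $1$. Iterating Proposition \ref{depthlemma} applied to the short exact sequences $0 \to \ker(\partial_{j+1}) \to N^{b_j} \to \ker(\partial_{j+2}) \to 0$ (valid for $1 \le j \le n - 1$ by the Ext vanishing) then yields $\depth(\ker(\partial_3)) \ge n$, so $\Tr(M) \otimes_R N$ is CM of dimension $n$.

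For the converse under the assumption that $\Ext^i_R(M, N)$ has finite length for $1 \le i \le n$, I plan to induct on $i$. The base case uses the short exact sequence $0 \to \Ext^1_R(M, N) \to \Tr(M) \otimes_R N \to \im(\partial_2) \to 0$: since $\Tr(M) \otimes_R N$ has depth $n \ge 1$ by hypothesis, its finite-length submodule must vanish. For the inductive step with $\Ext^j_R(M, N) = 0$ for $j < i$, iterating the short exact sequences $0 \to \Tr(\Omega^{j-1}_R M) \otimes_R N \to N^{b_{j+1}} \to \Tr(\Omega^j_R M) \otimes_R N \to 0$ (valid by the Ext vanishing) propagates the bound $\depth(\Tr(\Omega^{i-1}_R M) \otimes_R N) \ge n - i + 1 \ge 1$, and the analogous short exact sequence realizes $\Ext^i_R(M, N) \cong \Ext^1_R(\Omega^{i-1}_R M, N)$ as a finite-length submodule of this depth-positive module, forcing vanishing.

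For the converse under the assumption that $\Tor^R_i(M, N^\dagger)$ has finite length for $1 \le i \le n$, my plan is to use local duality on the complex $X := M \otimes^L_R N^\dagger$. Using $\RHom_R(N^\dagger, \omega_R) \simeq N[-(d-n)]$, one obtains $H^i_\m(X) \cong \Ext^{n-i}_R(M, N)^\vee$ for $i \le n$. On the other hand, the hypercohomology spectral sequence $E_2^{p, q} = H^p_\m(H^q(X)) \Rightarrow H^{p+q}_\m(X)$ satisfies $E_2^{p, q} = 0$ for all $(p, q)$ with $p + q \in [0, n - 1]$: terms $E_2^{p, 0}$ vanish unless $p = n$ (by the CM assumption on $H^0(X)$); terms $E_2^{p, -q}$ with $1 \le q \le n$ vanish unless $p = 0$ (by finite length of $\Tor^R_q(M, N^\dagger)$); and for $q \le -n - 1$, any contribution to such a diagonal would require $p \ge n + 1$, but $\dim \Tor^R_{-q}(M, N^\dagger) \le n$ (as a subquotient of $F_{-q} \otimes_R N^\dagger$) forces $H^p_\m(\Tor^R_{-q}) = 0$ for $p > n$. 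This gives $H^i_\m(X) = 0$ for $0 \le i \le n - 1$, so $\Ext^j_R(M, N) = 0$ for $1 \le j \le n$ by the duality formula. The most delicate step will be verifying the local duality identification for the potentially unbounded-below complex $X$ and confirming convergence of the spectral sequence in the requisite range.
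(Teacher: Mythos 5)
Your proposal correctly reduces to $\Tr(M)\otimes_R N$ via Proposition \ref{transposeextend}, and all three parts of your argument go through, but the latter two take routes genuinely different from the paper's. The forward direction is essentially the paper's argument: the vanishing of $\Ext^{1\le i\le n}_R(M,N)$ makes $\Hom_R(F_\bullet,N)$ exact in degrees $1$ through $n$, and iterating the depth lemma on the resulting kernel sequences shows $\Tr(M)\otimes_R N=\coker(\partial_1)$ is CM of dimension $n$.

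For the converse under finite-length $\Ext$, your argument is cleaner than the paper's. The paper uses prime avoidance to find $y$ killing all the $\Ext$ modules and regular on $R$, $N$, and $M\otimes N^\vee$, then inducts on $n$, with a separate diagram chase handling $n=1$. You instead induct directly on $i$: the depth lemma applied to the sequences $0\to\coker(\partial_j)\to N^{b_{j+1}}\to\coker(\partial_{j+1})\to 0$ (exact when $\Ext^j_R(M,N)=0$) propagates $\depth(\Tr(\Omega^{i-1}_RM)\otimes_RN)\ge n-i+1\ge 1$, so the finite-length submodule $\Ext^i_R(M,N)=\ker\bigl(\coker(\partial_i)\to N^{b_{i+1}}\bigr)$ must vanish. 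This avoids prime avoidance and the $n=1$ special case entirely.

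For the converse under finite-length $\Tor$, the paper's argument is much shorter than yours: localize at a nonmaximal prime $\p$, where $\Tor^{R_\p}_{1\le i\le n}(M_\p,(N^\dagger)_\p)=0$ and $(M\otimes_RN^\dagger)_\p$ is still CM, so Lemma \ref{torextdual} (proved earlier in the paper by a direct CM-duality argument on a truncated free resolution) gives $\Ext^{1\le i\le n}_{R_\p}(M_\p,N_\p)=0$; hence $\Ext^i_R(M,N)$ has finite length, reducing to the previous case. Your spectral-sequence route works, but the delicacy you flag is real and deserves resolution: one should model $\RGamma_\m(X)$ by $\check{C}\otimes_RF_\bullet\otimes_RN^\dagger$ where $\check{C}$ is the Čech complex, and observe that since $\check{C}$ is concentrated in degrees $[0,d]$ the filtration by Čech degree is finite in each total degree, so the spectral sequence $E_2^{p,q}=H^p_\m(\Tor^R_{-q}(M,N^\dagger))\Rightarrow H^{p+q}_\m(X)$ converges despite $X$ being unbounded below cohomologically; the companion (row-filtration) spectral sequence, which degenerates because $N^\dagger$ is CM, then supplies the duality $H^j_\m(X)\cong\Ext^{n-j}_R(M,N)^\vee$ without invoking local duality for unbounded complexes as a black box. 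With that filled in your argument is complete, though considerably heavier machinery than the paper needs, and it re-derives in effect the content of Lemma \ref{torextdual} that the paper has already established.
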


\begin{proof}
Suppose first that $\Ext^{1 \le i \le n}_R(M,N)=0$. Let $F:=F_{n+1} \xrightarrow{\partial_n} F_n \xrightarrow{\partial_{n-1}} \cdots \xrightarrow{\partial_0} F_0 \xrightarrow{p} M \rightarrow 0$ be part of a minimal free resolution of $M$. As $\Ext^{1 \le i \le n}_R(M,N)=0$, the complex
\[0 \rightarrow \Hom_R(M,N) \xrightarrow{\Hom(p,N)} \Hom_R(F_0,N) \xrightarrow{\Hom(\partial_0,N)} \Hom_R(F_1,N) \rightarrow \cdots \rightarrow \Hom_R(F_{d+1},N)\]
is acyclic. Split this exact sequence into two exact sequences
\[0 \rightarrow \Hom_R(M,N) \xrightarrow{\Hom(p,N)} \Hom_R(F_0,N) \xrightarrow{\Hom(\partial_0,N)} \Hom_R(F_1,N) \rightarrow P \rightarrow 0\]
and \[0 \rightarrow P \rightarrow \Hom_R(F_2,N) \rightarrow \cdots \rightarrow \Hom_R(F_{n+1},N),\]
and note that $P \cong \Tr_R(M) \otimes_R N$ as in the proof of Proposition \ref{transposeextend}.
It follows from the depth lemma that $P$ is CM of dimension $n$, and then so is $M \otimes_R N^{\dagger}$ by Proposition \ref{transposeextend}.

For the converse, there is nothing to prove if $n=0$, so we let $n>0$. Suppose first that $M \otimes_R N^{\dagger}$ is CM of dimension $n$ and that $\Ext^i_R(M,N)$ has finite length for $1 \le i \le n$. We proceed by induction on $n$. By prime avoidance, we may choose $y \in \bigcap_{1 \le i \le n} \Ann_R(\Ext^i_R(M,N))$ that is a nonzerodivisor on $R$, $N$, and $M \otimes_R N^{\vee}$. 

Then applying $\Hom_R(M,-)$ to this exact sequence we get exact sequences
\[0 \rightarrow \Hom_R(M,N) \xrightarrow{y} \Hom_R(M,N) \rightarrow \Hom_R(M,N/yN) \rightarrow \Ext^1_R(M,N) \rightarrow 0\]
and
\[0 \to \Ext^i_R(M,N) \to \Ext^i_R(M,N/yN) \to \Ext^{i+1}_R(M,N) \to 0\]
for $1 \le i \le n-1$.

In particular, $\Ext^i_R(M,N/yN)$ has finite length for $1 \le i \le n-1$. Since $N/yN$ and $M \otimes_R (N/yN)^{\dagger} \cong (M \otimes_R N^{\dagger})/y(M \otimes_R N^{\dagger})$ are CM of dimension $n-1$ it follows from inductive hypothesis that $\Ext^{1 \le i \le n-1}_R(M,N/yN)=0$. If $n>1$, then we have $\Ext^{1 \le i \le n}_R(M,N)=0$, giving the claim, so it remains to show the case where $n=1$. Applying $\Hom_R(-,\w_R)$ to the short exact sequence
\[0 \rightarrow N \xrightarrow{y} N \rightarrow N/yN \rightarrow 0\]
we get a short exact sequence
\[0 \rightarrow N^{\dagger} \xrightarrow{\cdot y} N^{\dagger} \rightarrow (N/yN)^{\dagger} \rightarrow 0,\]

and applying $M \otimes_R -$ to this exact sequence gives another exact sequence
\[0 \rightarrow M \otimes_R N^{\dagger} \xrightarrow{\cdot y} M \otimes_R N^{\dagger} \rightarrow M \otimes_R (N/yN)^{\dagger} \rightarrow 0.\]
Applying $\Hom_R(-,\w_R)$ to this exact sequence we get another exact sequence fitting into a commutative diagram
\[\begin{tikzcd}
	0 & {(M \otimes_R N^{\dagger})^{\dagger}} & {(M \otimes_R N^{\dagger})^{\dagger}} & {(M \otimes_R (N/yN)^{\dagger})^{\dagger}} & 0 \\
	0 & {\Hom_R(M,N)} & {\Hom_R(M,N)} & {\Hom_R(M,N/yN)} & {\Ext^1_R(M,N)} & 0
	\arrow[from=1-1, to=1-2]
	\arrow["{\cdot y}", from=1-2, to=1-3]
	\arrow[from=1-2, to=2-2]
	\arrow[from=1-3, to=1-4]
	\arrow[from=1-3, to=2-3]
	\arrow[from=1-4, to=1-5]
	\arrow[from=1-4, to=2-4]
	\arrow[from=2-1, to=2-2]
	\arrow["{\cdot y}", from=2-2, to=2-3]
	\arrow[from=2-3, to=2-4]
	\arrow[from=2-4, to=2-5]
	\arrow[from=2-5, to=2-6]
\end{tikzcd}\]
whose vertical arrows are isomorphisms induced by Hom-tensor adjointness, noting that for $R/(y)$-modules, $(-)^{\dagger}$ may be identified with $\Hom_{R/(y)}(-,\w_{R/(y)})$. In particular, $\Ext^1_R(M,N)=0$, so the claim holds when $n=1$.

Finally, if instead we suppose that $M \otimes_R N^{\dagger}$ is CM of dimension $n$ and that $\Tor^R_{1 \le i \le n}(M,N^{\dagger})$ has finite length, then localizing at any nonmaximal prime $\p$ and appealing to Lemma \ref{torextdual} shows that $\Ext^i_R(M,N)$ has finite length for $1 \le i \le n$, reducing to the case above and completing the proof.    
\end{proof}

The following is well-known to experts, but we provide the explicit statement and a short proof for the convenience of the reader:
\begin{prop}\label{exttorall}
Suppose $R$ is CM with canonical module $\w_R$. Let $M$ and $N$ be $R$-modules with $N$ MCM.
Then following are equivalent:
\begin{enumerate}
\item[$(1)$] $M \otimes_R N$ is MCM and $q_R(M,N)=0$.
\item[$(2)$] $b_R(M,N^{\vee})=0$.
\end{enumerate}
\end{prop}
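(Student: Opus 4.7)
The plan is to base everything on derived tensor-hom adjointness. Since $N$ is MCM, $\Ext^{\ge 1}_R(N,\w_R)=0$, hence $\RHom_R(N,\w_R)\simeq N^{\vee}$ in $D(R)$. Combined with adjunction this yields the master identity
\[
\RHom_R(M,N^{\vee})\;\simeq\;\RHom_R(M\otimes^{L}_R N,\w_R),
\]
from which both implications will be extracted by tracking in which (co)homological degrees each side is supported.

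For $(1)\Rightarrow(2)$: if $q_R(M,N)=0$, then $M\otimes^{L}_R N\simeq M\otimes_R N$ is concentrated in degree $0$, and since it is MCM, $\RHom_R(M\otimes_R N,\w_R)\simeq(M\otimes_R N)^{\vee}$ is concentrated in cohomological degree $0$. Transporting through the master identity gives $\RHom_R(M,N^{\vee})\simeq\Hom_R(M,N^{\vee})$, i.e., $b_R(M,N^{\vee})=0$.

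For $(2)\Rightarrow(1)$: the hypothesis says $\RHom_R(M,N^{\vee})\simeq\Hom_R(M,N^{\vee})$ lies in cohomological degree $0$, so by the master identity $\RHom_R(M\otimes^{L}_R N,\w_R)$ does too. Apply $\RHom_R(-,\w_R)$ again; since $\w_R$ is a dualizing complex on the CM ring $R$, biduality on $D^{b}_{\text{fg}}(R)$ yields
\[
M\otimes^{L}_R N\;\simeq\;\RHom_R\!\bigl(\Hom_R(M,N^{\vee}),\w_R\bigr).
\]
The right-hand side has cohomology $\Ext^i_R(\Hom_R(M,N^{\vee}),\w_R)$, which is supported in non-positive homological degrees, while the left-hand side has $\Tor^R_i(M,N)$ in non-negative homological degrees. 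The only way these can agree is that both are concentrated in degree $0$. This simultaneously gives $q_R(M,N)=0$ and $\Ext^{\ge 1}_R((M\otimes_R N)^{\vee},\w_R)=0$, so $(M\otimes_R N)^{\vee}$ is MCM; the degree-$0$ part of the identification then reads $M\otimes_R N\cong(M\otimes_R N)^{\vee\vee}$, and since the canonical dual of an MCM module is MCM, $M\otimes_R N$ is MCM.

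I expect the main obstacle to be the degree-concentration step in $(2)\Rightarrow(1)$: it relies on the fact that derived tensor products live in non-negative homological degrees while derived homs of a module into $\w_R$ live in non-positive homological degrees, together with the biduality $\RHom_R(\RHom_R(-,\w_R),\w_R)\simeq\mathrm{id}$ on $D^{b}_{\text{fg}}(R)$. This last point is exactly where the standing hypothesis that $R$ admits a canonical module is used. An alternative, more hands-on route is to run a resolution-by-resolution argument: apply Proposition \ref{mcmext} with $N^{\vee}$ in place of $N$ to obtain $M\otimes_R N$ MCM directly from $\Ext^{1\le i\le d}_R(M,N^{\vee})=0$, and then chop the complex $\Hom_R(F,N^{\vee})$ (for $F\to M$ a free resolution) into short exact sequences of MCM modules to deduce vanishing of $\Tor^R_{\ge 1}(M,N)$ by dualizing back through $\w_R$; however, this bookkeeping is substantially more painful than the derived-category argument above.
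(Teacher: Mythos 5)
Your proof takes a genuinely different route from the paper's. The paper never enters the derived category: for $(1)\Rightarrow(2)$ it dualizes the exact complex $F_M \otimes_R N \to M\otimes_R N \to 0$ of MCM modules and reads off $\Ext$ via Hom--tensor adjointness, and for $(2)\Rightarrow(1)$ it first invokes Proposition~\ref{mcmext} to get $M\otimes_R N$ MCM, then splices the acyclic complex $0 \to (M\otimes_R N)^\vee \to (F_M\otimes_R N)^\vee$ into short exact sequences of MCM modules and dualizes each one, using the depth lemma throughout. That is essentially the ``alternative, more hands-on route'' you sketch at the end. Your master identity $\RHom_R(M,N^{\vee}) \simeq \RHom_R(M\otimes^L_R N,\w_R)$ is the same adjointness packaged in the derived category; direction $(1)\Rightarrow(2)$ of your argument is clean.

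There is a genuine gap in your $(2)\Rightarrow(1)$: you invoke biduality on $D^b_{\mathrm{fg}}(R)$, but $M\otimes^L_R N$ is not known a priori to lie in $D^b_{\mathrm{fg}}(R)$ --- finiteness of $q_R(M,N)$ is one of the conclusions you are trying to reach, so appealing to $D^b$-biduality here is circular. The gap can be filled because biduality over $\w_R$ in fact extends to $D^+_{\mathrm{fg}}(R)$, which $M\otimes^L_R N$ always belongs to: take a degreewise finite free resolution $F$ of $M$, so $F\otimes_R N$ represents $M\otimes^L_R N$, is bounded below, and has every term a finite direct sum of copies of the MCM module $N$. Since $\w_R$ has finite injective dimension and $\Ext^{>0}_R(N,\w_R)=0$, the hyper-$\Ext$ spectral sequence converges and gives $\RHom_R(F\otimes_R N,\w_R)\simeq \Hom_R(F\otimes_R N,\w_R)\cong F^*\otimes_R N^{\vee}$, a bounded-above complex of MCM modules; one more application of the same argument yields $\RHom_R(F^*\otimes_R N^{\vee},\w_R)\simeq F^{**}\otimes_R N^{\vee\vee}\cong F\otimes_R N$. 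With that justification supplied, your degree-concentration argument applies and the proof is correct. You should either cite biduality on $D^+_{\mathrm{fg}}$ explicitly (e.g.\ Hartshorne's formulation) or include this spectral-sequence verification; otherwise the step does not follow from what is stated.
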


\begin{proof}
We first show $(1) \Rightarrow (2)$. Let $F_M$ be a minimal free resolution of $M$. Since $q_R(M,N)=0$, $F_M \otimes_R N \to M \otimes_R N \to 0$ is acyclic. But as all the terms of this complex are MCM, we have that $0 \to (M \otimes_R N)^{\vee} \to (F_M \otimes_R N)^{\vee}$ is acyclic. But by Hom-tensor adjointness this complex is isomorphic to 
\[0 \to \Hom_R(M,N^{\vee}) \to \Hom_R(F_M,N^{\vee})\]
whose homologies are by definition $\Ext^i_R(M,N)$ in positive degree, so $b_R(M,N^{\vee})=0$.
For $(2) \Rightarrow (1)$, if $b_R(M,N^{\vee})=0$, then Proposition \ref{mcmext} forces $M \otimes_R N$ to be MCM, and we also have \[0 \to \Hom_R(M,N^{\vee}) \to \Hom_R(F_M,N^{\vee})\] is acylic. As in the previous part, Hom-tensor adjointness gives that $0 \to (M \otimes_R N)^{\vee} \to (F_M \otimes_R N)^{\vee}$ is acyclic. Applying $(-)^{\vee}$, then since each term of this complex is MCM, and since $N$ and $M \otimes_R N$ are as well, we get that $F_M \otimes_R N \to M \otimes_R N \to 0$ is acyclic. Thus $q_R(M,N)=0$, completing the proof. 
\end{proof}

\begin{lem}\label{cutdownMCM}

Suppose $M$ and $N$ are MCM $R$-modules. Then the following are equivalent:
\begin{enumerate}
\item[$(1)$] $M \otimes_R N$ MCM and $\Tor^R_{1 \le i \le d}(M,N)=0$.
\item[$(2)$] $\Tor^R_{1 \le i \le d}(M/\underline{x}M,N)=0$.
\item[$(3)$] $\tilde{M} \otimes_R N$ is MCM.
\end{enumerate}
If $R$ admits a canonical module $\w_R$, then these conditions are also equivalent to 
\begin{enumerate}
\item[$(4)$] $\Ext^{1 \le i \le d}_R(\tilde{M}),N^{\vee})=0$.
\end{enumerate}    
\end{lem}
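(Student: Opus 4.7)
The plan is to establish (1) $\iff$ (2) $\iff$ (3) directly by syzygy and depth arguments, and then derive (3) $\iff$ (4) from Proposition~\ref{mcmext}. For (1) $\iff$ (2), set $M_j := M/(x_1, \ldots, x_j)M$ for $0 \le j \le d$, so $M_0 = M$ and $M_d = M/\underline{x}M$; each $x_{j+1}$ is regular on $M_j$ since $M$ is MCM. Tensoring the short exact sequence $0 \to M_{j-1} \xrightarrow{x_j} M_{j-1} \to M_j \to 0$ with $N$ yields a long exact sequence in $\Tor$. For (2) $\Rightarrow$ (1), I would induct downward on $j$ from $j = d$: vanishing of $\Tor^R_i(M_j, N)$ in the range $1 \le i \le d$ makes multiplication by $x_j$ surjective on $\Tor^R_i(M_{j-1}, N)$ in this range, so Nakayama's lemma forces $\Tor^R_i(M_{j-1}, N) = 0$; examining the $i = 0$ part of the same long exact sequence shows $x_j$ is regular on $M_{j-1} \otimes_R N = (M \otimes_R N)/(x_1, \ldots, x_{j-1})(M \otimes_R N)$, so $\underline{x}$ is $M \otimes_R N$-regular. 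The converse (1) $\Rightarrow$ (2) runs the same induction upward, with the $(M \otimes_R N)$-regularity of $\underline{x}$ providing the needed injectivity at the $i = 1$ level.

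For (2) $\iff$ (3), tensoring the exact sequence
\[0 \to \tilde M \to F_{d-1} \to \cdots \to F_0 \to M/\underline{x}M \to 0\]
coming from the minimal free resolution of $M/\underline{x}M$ with $N$ produces a complex whose homologies at positions $0, 1, \ldots, d$ are exactly $\Tor^R_i(M/\underline{x}M, N)$. Hence (2) is equivalent to the exactness of this tensored sequence. Assuming (2), I would split into short exact sequences and apply Proposition~\ref{depthlemma} iteratively from the right end: $M/\underline{x}M \otimes_R N$ has depth $0$ (it is finite length, being annihilated by $\underline{x}R$), each $F_i \otimes_R N$ has depth $d$, and the depth of each successive kernel grows by one until it caps at $d$, giving $\depth_R(\tilde M \otimes_R N) \ge d$. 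Conversely, assuming (3), I would induct downward on $i$ from $i = d$ using the short exact sequences $0 \to \Omega^i \to F_{i-1} \to \Omega^{i-1} \to 0$ (for the syzygies $\Omega^i = \Omega^i(M/\underline{x}M)$): tensored with $N$, these exhibit $\Tor^R_i(M/\underline{x}M, N) \cong \Tor^R_1(\Omega^{i-1}, N)$ as a submodule of $\Omega^i \otimes_R N$. Starting from $\Omega^d \otimes_R N = \tilde M \otimes_R N$ (depth $\ge d$), the depth lemma at each stage forces $\Tor^R_i(M/\underline{x}M, N)$ to have depth $\ge 1$; since this module has finite length, it must vanish, after which the depth lemma yields $\depth_R(\Omega^{i-1} \otimes_R N) \ge i - 1$, carrying the induction forward.

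For (3) $\iff$ (4), I would apply Proposition~\ref{mcmext} with $\tilde M$ in place of $M$ and $N^{\vee}$ (MCM of dimension $d$) in place of $N$, using $(N^{\vee})^{\vee} \cong N$ for the MCM module $N$; the forward direction of Proposition~\ref{mcmext} immediately yields (4) $\Rightarrow$ (3). For the converse, the finite-length hypothesis required by Proposition~\ref{mcmext} is verified by noting that $\tilde M$, being a $d$-th syzygy of the finite length module $M/\underline{x}M$, becomes free after localization at any nonmaximal prime; hence $\Ext^i_R(\tilde M, N^{\vee})$ is supported at $\{\m\}$ and has finite length for $i \ge 1$. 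The main obstacle will be the downward induction in (3) $\Rightarrow$ (2): the argument hinges on maintaining strict positivity of the propagated depth bound on $\Omega^i \otimes_R N$ against the potential nonvanishing of finite-length $\Tor$ modules, so edge cases (such as when an intermediate image or $\tilde M \otimes_R N$ itself might be zero) must be handled carefully — though $\tilde M \otimes_R N$ has full support as a tensor of two MCM modules, which sustains the argument throughout.
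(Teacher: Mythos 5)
Your proof is correct and follows essentially the same route as the paper: the downward Nakayama induction for $(2) \Rightarrow (1)$, the depth-lemma splicing of the tensored truncated resolution for $(2) \Leftrightarrow (3)$ (including killing the finite-length $\Tor$ submodules in the downward induction), and the appeal to Proposition \ref{mcmext} for $(3) \Leftrightarrow (4)$, with the finite-length hypothesis supplied by $\tilde{M}$ being locally free on the punctured spectrum, all match the paper's arguments. The only divergence is minor: for $(1) \Rightarrow (2)$ the paper compares homologies of $F^M \otimes_R F^N \otimes_R K(\underline{x},R)$, whereas you run the cutting-down induction along $\underline{x}$ using that $\underline{x}$ is a regular sequence on the MCM module $M \otimes_R N$; both work equally well (and your claim about the homology at position $0$ of the augmented tensored complex is a harmless indexing slip, since exactness there is automatic).
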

\begin{proof}
We first show $(1) \Rightarrow (2)$. Suppose $M \otimes_R N$ MCM and $\Tor^R_{1 \le i \le d}(M,N)=0$. Let $F^M$ and $F^N$ be minimal free resolutions of $M$ and $N$ respectively. Then since $\Tor^R_{1 \le i \le d}(M,N)=0$, $F^M \otimes_R F^N$ has no homology in degrees $1 \le j \le d$. Thus truncating to these degrees gives part of a minimal free resolution of $M \otimes_R N$. Then in degrees $0 \le j \le d$, the homologies of $F^M \otimes_R F^N \otimes_R K(\underline{x},R)$ are $\Tor^R_j(M \otimes_R N,R/\underline{x}R)$, which are $0$ for $j\geq 1$ since $M \otimes_R N$ is MCM. But as $M$ is MCM, $F^M \otimes_R K(\underline{x},R)$ is a minimal free resolution of $M/\underline{x}M$ over $R$, and as $F^M \otimes_R F^N \otimes_R K(\underline{x},R) \cong (F^M \otimes_R K(\underline{x},R)) \otimes_R F^N$ has homologies $\Tor^R_j(M/\underline{x}M,N)$, the claim follows.

We now show $(2) \Rightarrow (3)$. Suppose $\Tor^R_{1 \le i \le d}(M/\underline{x}M,N)=0$. Let $F^{M/\underline{x}M}$ be the minimal free resolution of $M/\underline{x}M$ over $R$. Since $\Tor^R_{1 \le i \le d}(M/\underline{x}M,N)=0$, applying $- \otimes_R N$ to $F^{M/\underline{x}M}$ gives an exact sequence of the form
\[0 \to \tilde{M} \otimes_R N \to F_{d-1}^{M/\underline{x}M} \otimes_R N \to \cdots \to F^{M/\underline{x}M}_1 \otimes_R N \to F^{M/\underline{x}M}_0 \otimes_R N \to M/\underline{x}M \otimes_R N \to 0.\]
As $N$ is MCM, it follows from the depth lemma that $\tilde{M} \otimes_R N $ is MCM.

Next we show $(3) \Rightarrow (2)$. Suppose $\tilde{M} \otimes_R N$ is MCM. Again using $F^{M/\underline{x}M}$ for the minimal free resolution of $M/\underline{x}M$ over $R$, we have an exact sequence $0 \to \tilde{M} \to F_{d-1}^{M/\underline{x}M} \to \Omega^{d-1}_R(M/\underline{x}M) \to 0$ and applying $- \otimes_R N$ gives an exact sequence
\[0 \to \Tor^R_1(\Omega^{d-1}_R(M/\underline{x}M),N) \to \tilde{M} \otimes_R N \to F^{M/\underline{x}M}_{d-1} \otimes_R N \to \Omega^{d-1}_R(M/\underline{x}M) \otimes_R N \to 0.\]
Since $\Tor^R_1(\Omega^{d-1}_R(M/\underline{x}M),N) \cong \Tor^R_d(M/\underline{x}M,N)$ is killed by $\underline{x}$, and thus has finite length, and since $\tilde{M} \otimes_R N$ is MCM, it follows that $\Tor^R_1(\Omega^{d-1}_R(M/\underline{x}M),N)=0$ and the depth lemma then gives that $\depth_R(\Omega^{d-1}_R(M/\underline{x}M) \otimes_R N) \ge d-1$. Then we may repeat this argument successively on the lower syzygies to obtain that $\Tor^R_1(\Omega^{i-1}_R(M/\underline{x}M),N)=0$ for all $1 \le i \le d$, that is that $\Tor^R_i(M/\underline{x}M,N)=0$ for all $1 \le i \le d$, as desired.

Now we show $(2) \Rightarrow (1)$. Suppose $\Tor^R_{1 \le i \le d}(M/\underline{x}M,N)=0$. We will show inductively for all $0 \le j \le d$ that $\depth_R(M/(x_1,\dots,x_j)M \otimes_R M) \ge d-j$ and that $\Tor^R_{1 \le i \le d}(M/(x_1,\dots,x_j)M,N)=0$ for which the $j=0$ case concludes the proof. The base case where $j=d$ follows at once, so suppose for some $1 \le j \le d$, we have obtained that $\depth_R(M/(x_1,\dots,x_j)M \otimes_R N) \ge d-j$ and that $\Tor^R_{1 \le i \le d}(M/(x_1,\dots,x_j)M,N)=0$ for some $j$. Applying $- \otimes_R N$ to the short exact sequence 
\[0 \rightarrow M/(x_1,\dots,x_{j-1}) \xrightarrow{\cdot x_j} M/(x_1,\dots,x_{j-1})M \rightarrow M/(x_1,\dots,x_j)M \rightarrow 0\] we obtain from the induced the long exact sequence in $\Tor$ that the maps 
\[\Tor_i(M/(x_1,\dots,x_{j-1})M,N) \xrightarrow{\cdot x_j} \Tor^R_i(M/(x_1,\dots,x_{j-1})M,N)\] are surjective for all $1 \le i \le d$. By Nakayama's lemma, this forces $\Tor^R_{1 \le i \le d}(M/(x_1,\dots,x_{j-1})M,N)=0$. Further, as $\Tor^R_1(M/(x_1,\dots,x_{j})M,N)=0$, we get that $x_j$ is a nonzerodivisor on the module $M/(x_1,\dots,x_{j-1})M \otimes_R N$ so that $\depth_R(M/(x_1,\dots,x_{j-1})M \otimes_R N)=\depth_R(M/(x_1,\dots,x_j)M \otimes_R N)+1=d-j+1=d-(j-1)$. The claim then follows from induction, completing the proof.

Assuming $R$ admits a canonical module, then for $(3) \iff (4)$, since $\tilde{M}$ is locally free on the punctured spectrum, $\Tor^R_i(\tilde{M},N)$ has finite length for all $i>0$, and the proof is completed by Proposition \ref{mcmext}.
\end{proof}

\begin{lem}\label{torcutdown}

Suppose $N$ is an $R$-module and $M$ is an MCM $R$-module. Then
\begin{enumerate}
\item[$(1)$] $q_R(\tilde{M},N) \le q_R(M,N)$, and
\item[$(2)$] $b_R(\tilde{M},N)=b_R(M,N)$.
\end{enumerate}

\end{lem}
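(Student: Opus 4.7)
My plan is to exploit that since $M$ is MCM, $\underline{x}$ is an $M$-regular sequence, making the short exact sequences
\[
0 \to M/(x_1, \ldots, x_{k-1})M \xrightarrow{x_k} M/(x_1, \ldots, x_{k-1})M \to M/(x_1, \ldots, x_k)M \to 0
\]
exact for each $1 \le k \le d$. For part $(1)$, applying $- \otimes_R^L N$ and extracting the long exact Tor sequence yields the inductive bound $q_R(M/(x_1, \ldots, x_k)M, N) \le q_R(M/(x_1, \ldots, x_{k-1})M, N) + 1$, and hence $q_R(M/\underline{x}M, N) \le q_R(M, N) + d$. Since $\tilde{M} = \Omega^d_R(M/\underline{x}M)$, the standard dimension-shifting isomorphism $\Tor^R_i(\tilde{M}, N) \cong \Tor^R_{i+d}(M/\underline{x}M, N)$ for $i \ge 1$ immediately yields $q_R(\tilde{M}, N) \le q_R(M, N)$, proving $(1)$.

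For part $(2)$, I apply $\Hom_R(-, N)$ to the same short exact sequences. The resulting long exact Ext sequence gives the upper bound $b_R(M/(x_1, \ldots, x_k)M, N) \le b_R(M/(x_1, \ldots, x_{k-1})M, N) + 1$ by inspection. To upgrade this to equality, I use Nakayama's lemma: if $b_R(M/(x_1, \ldots, x_{k-1})M, N) = b$ is finite, the long exact sequence identifies $\Ext^{b+1}_R(M/(x_1, \ldots, x_k)M, N)$ as the cokernel of multiplication by $x_k$ on the nonzero finitely generated $R$-module $\Ext^b_R(M/(x_1, \ldots, x_{k-1})M, N)$, which is nonzero since $x_k \in \m$. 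Iterating yields $b_R(M/\underline{x}M, N) = b_R(M, N) + d$ (interpreting $\infty + d = \infty$). Combining this with $\Ext^i_R(\tilde{M}, N) \cong \Ext^{i+d}_R(M/\underline{x}M, N)$ for $i \ge 1$ gives $b_R(\tilde{M}, N) = b_R(M, N)$ in all cases where $b_R(M, N) \ge 1$ or $b_R(M, N) = \infty$.

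The edge case $b_R(M, N) = 0$ requires a brief separate argument: the positive-degree identification already shows $\Ext^i_R(\tilde{M}, N) = 0$ for $i \ge 1$, so what remains is to verify $\Hom_R(\tilde{M}, N) \ne 0$. Applying $\Hom_R(-, N)$ to the short exact sequence $0 \to \tilde{M} \to F_{d-1} \to \Omega^{d-1}_R(M/\underline{x}M) \to 0$ coming from the minimal free resolution of $M/\underline{x}M$, and using that $\Ext^1_R(\Omega^{d-1}_R(M/\underline{x}M), N) \cong \Ext^d_R(M/\underline{x}M, N)$, produces a surjection $\Hom_R(\tilde{M}, N) \twoheadrightarrow \Ext^d_R(M/\underline{x}M, N)$, whose target is nonzero by the equality $b_R(M/\underline{x}M, N) = d$ just established. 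The main obstacle is the Nakayama step in part $(2)$: one must keep track that the inductively constructed Ext groups remain finitely generated over $R$, but this is automatic since each $M/(x_1, \ldots, x_k)M$ is a finitely generated $R$-module, so $\Ext^b_R(M/(x_1, \ldots, x_{k-1})M, N)$ is finitely generated for all $b$.
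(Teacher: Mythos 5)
Your proof of part $(1)$ matches the paper's approach exactly: iterate the long exact $\Tor$ sequence along the regular sequence to obtain $q_R(M/\underline{x}M,N)\le q_R(M,N)+d$ and dimension-shift. Part $(2)$ is also structured the same way — the same short exact sequences, Nakayama, dimension shift — and your handling of the $b_R(M,N)=0$ edge case (verifying $\Hom_R(\tilde M,N)\neq 0$) is careful and correct. However, there is a genuine gap in the case $b_R(M,N)=\infty$.

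Your Nakayama step runs in only one direction: it shows that if $b_R(A,N)=b<\infty$ then $\Ext^{b+1}_R(A/xA,N)$ is the (nonzero) cokernel of $x$ on $\Ext^b_R(A,N)$, hence $b_R(A/xA,N)=b+1$. Combined with the easy upper bound this settles the finite case, but it says nothing about what happens when $b_R(A,N)=\infty$; the parenthetical ``interpreting $\infty+d=\infty$'' asserts rather than proves that $b_R(\tilde M,N)=\infty$ follows. What is missing is the converse implication: if $\Ext^{i>b+1}_R(A/xA,N)=0$ then $\Ext^{i>b}_R(A,N)=0$. This is how the paper applies Nakayama — from the long exact sequence, vanishing of $\Ext^{i+1}_R(A/xA,N)$ forces $x$ to act surjectively on $\Ext^i_R(A,N)$, so that group vanishes. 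That converse, together with the forward direction you already have, gives the clean ``if and only if'' $\Ext^{i>b}_R(A,N)=0\iff\Ext^{i>b+1}_R(A/xA,N)=0$, which handles the infinite case automatically (and also makes the $b=0$ edge case argument unnecessary). Adding this one-line converse closes the gap.
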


\begin{proof}
Set $q:=q_R(M,N)$. We proceed by induction on $j$ to show $\Tor^R_{i>q+j}(M/(x_1,\dots,x_j)M,N)=0$ for all $0 \le j \le d$ with the base case when $j=0$ holding by assumption. Suppose we have the claim for some $j$. Applying $- \otimes_R N$ to the short exact sequence
\[0 \rightarrow M/(x_1,\dots,x_j)M \xrightarrow{\cdot x_{j+1}} M/(x_1,\dots,x_j)M \rightarrow M/(x_1,\dots,x_{j+1})M \rightarrow 0\]
we observe from the induced long exact sequence in $\Tor$ that $\Tor^R_{i>q+j+1}(M/(x_1,\dots,x_{j+1}M,N)=0$. That $\Tor^R_{i>q+d}(M/\underline{x}M,N)=0$ follows from induction, and claim $(1)$ follows from dimension shifting. 

For claim $(2)$, it suffices to show for any $b  \ge 0$ that $\Ext^{i>b}_R(M,N)=0$ if and only if we have $\Ext^{i>b+d}_R(M/\underline{x}M,N)=0$. 
To see this, we show the stronger claim that $\Ext^{i>b}_R(L,N)=0$ if and only if $\Ext^{i>b+1}_R(L/xL,N)=0$ for any $R$-module $L$ and any $x$ that is regular on $L$.

Apply $\Hom_R(-,N)$ to the short exact sequence
\[0 \rightarrow L \xrightarrow{\cdot x} L \rightarrow L/xL \rightarrow 0\]
and consider the induced long exact sequence in $\Ext$. If $\Ext^{i>b}_R(L,N)=0$, then we immediately get that $\Ext^{i>b+1}_R(L/xL,N)$. Conversely, if $\Ext^{i>b+1}_R(L/xL,N)=0$, then the maps $\Ext^i_R(L,N) \xrightarrow{\cdot x} \Ext^i_R(L,N)$ are surjective for $i>b$, and then $\Ext^{i>b}_R(L,N)=0$ by Nakayama's lemma, completing the proof.
\end{proof}

\begin{lem}\label{negativeqr}
Suppose $M$ is an $R$-complex with $t:=\sup\{i \mid H_i(M) \ne 0\}<\infty$. Then $\depth_R(M) \ge -t$ with equality if and only if $\depth_R(H_t(M))=0$. 
\end{lem}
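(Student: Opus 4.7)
The plan is to compute $\Ext^i_R(k,M)$ directly from an injective resolution of $M$ that is bounded above at degree $t$, which we can arrange precisely because $H_i(M)=0$ for $i>t$. First I would replace $M$ by a quasi-isomorphic complex $M'$ with $M'_i=0$ for $i>t$ via soft truncation (replace $M_t$ by $M_t/\operatorname{im}(M_{t+1}\to M_t)$ and set the higher-degree terms to $0$). Since $\depth_R(-)$ is invariant under quasi-isomorphism, $\depth_R(M)=\depth_R(M')$. A standard Cartan--Eilenberg-type construction then yields an injective resolution $I$ of $M'$ with $I_i=0$ for all $i>t$.

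With this resolution in hand, $\Hom_R(k,I)_j=\Hom_R(k,I_j)=0$ for $j>t$, so $\Ext^i_R(k,M)=H_{-i}(\Hom_R(k,I))=0$ for every $i<-t$. By Definition \ref{depthdef} this gives the desired inequality $\depth_R(M)\geq -t$.

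For the equality clause, since $I_{t+1}=0$ we compute
\[\Ext^{-t}_R(k,M)=H_t(\Hom_R(k,I))=\ker\bigl(\Hom_R(k,I_t)\to\Hom_R(k,I_{t-1})\bigr),\]
and left-exactness of $\Hom_R(k,-)$ identifies this kernel with $\Hom_R(k,\ker(I_t\to I_{t-1}))=\Hom_R(k,H_t(I))=\Hom_R(k,H_t(M))$, where we use that $H_t(I)=\ker(I_t\to I_{t-1})$ because $I_{t+1}=0$. Consequently $\depth_R(M)=-t$ if and only if $\Hom_R(k,H_t(M))\neq 0$, equivalently $\m\in\Ass_R(H_t(M))$, i.e., $\depth_R(H_t(M))=0$. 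The only subtle step is producing an injective resolution bounded above at degree $t$, which is a standard construction for bounded-above complexes; beyond that the proof is a mechanical computation from the definition of depth for a complex.
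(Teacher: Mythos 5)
Your proof is correct and takes essentially the same approach as the paper: both arguments produce an injective resolution $I$ of $M$ with $\sup(I)=t$ (the paper cites \cite[1.2(I)]{CJ15} for this, whereas you spell out the soft-truncation-plus-Cartan--Eilenberg construction), and then the bound $\depth_R(M)\ge -t$ and the identification $\Ext^{-t}_R(k,M)\cong\Hom_R(k,H_t(M))$ follow by the same computation with $\Hom_R(k,-)$ applied to $I$.
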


\begin{proof}
Since $t<\infty$, it follows that $M$ has an injective resolution $I$ for which $\sup(I)=t$ (see \cite[1.2 (I)]{CJ15}). If $i>t$, then $H_i(\Hom_R(k,I))=0$, so $\depth_R(M) \ge -t$. But we have equality if and only if $H_t(\Hom_R(k,I)) \cong \Hom_R(k,H_t(I)) \cong \Hom_R(k,H_t(M)) \ne 0$, that is, if $\depth_R(H_t(M))=0$, establishing the claim.
\end{proof}

\subsection{Comparing the Different Versions of $\ldep$ and $\rdep$}\label{subsection}
In this subsection, we describe the relationship between the derived $\ldep$/$\rdep$ conditions and their counterparts for modules. We present them here rather than in their devoted sections since their arguments are nearly identical.

\begin{lem}\label{depthcutdown}
If $x \in \m$, and if $N$ is an $R$-complex, then $\depth_R(N \otimes_R K(x,R))=\depth_R(N)-1$ where $K(x,R)$ denotes the Koszul complex on $x$. In particular, if $x$ is regular on $R$, then $\depth_R(N \otimes^L_R R/x)=\depth_R(N)-1$.
\end{lem}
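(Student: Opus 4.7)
The plan is to exploit the short exact sequence of complexes
\[0 \to N \to N \otimes_R K(x,R) \to N[1] \to 0\]
obtained by tensoring the obvious short exact sequence $0 \to R \to K(x,R) \to R[1] \to 0$ with $N$. A direct computation from the definition of depth shows $\depth_R(N[1]) = \depth_R(N) - 1$, since shifting up by one shifts the indices of $H^*_\m$ by one. Proposition~\ref{depthlemma}(1) then immediately yields the lower bound $\depth_R(N \otimes_R K(x,R)) \ge \depth_R(N) - 1$.

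The main obstacle is the matching upper bound. For this, I would apply local cohomology to the same short exact sequence to obtain a long exact sequence
\[\cdots \to H^i_\m(N) \to H^i_\m(N \otimes_R K(x,R)) \to H^{i+1}_\m(N) \xrightarrow{x} H^{i+1}_\m(N) \to \cdots,\]
whose connecting maps are (up to sign) multiplication by $x$, reflecting the identification $N \otimes_R K(x,R) \simeq \cone(x \colon N \to N)$. Setting $d_0 := \depth_R(N)$ and using the vanishing of $H^i_\m(N)$ for $i < d_0$, the piece at cohomological degree $d_0 - 1$ reduces to
\[H^{d_0-1}_\m(N \otimes_R K(x,R)) \cong \ker\bigl(x \colon H^{d_0}_\m(N) \to H^{d_0}_\m(N)\bigr),\]
so the task becomes showing this kernel is nonzero.

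The key observation is that $H^{d_0}_\m(N)$ is $\m$-power torsion: as a subquotient of $\Gamma_\m(I)$ for an injective resolution $I$ of $N$, each of its elements is annihilated by a power of $\m$, and hence by a power of $x$. Since $H^{d_0}_\m(N) \ne 0$ by the definition of depth, picking any nonzero $\eta \in H^{d_0}_\m(N)$ and letting $k \ge 1$ be minimal with $x^k \eta = 0$ produces a nonzero element $x^{k-1}\eta$ in the kernel of multiplication by $x$. This forces $H^{d_0-1}_\m(N \otimes_R K(x,R)) \ne 0$, giving the matching upper bound.

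The ``in particular'' statement is immediate: when $x$ is a nonzerodivisor on $R$, the Koszul complex $K(x,R)$ is a free resolution of $R/xR$, so $N \otimes^L_R R/xR \simeq N \otimes_R K(x,R)$ in the derived category, and the depth formula transfers.
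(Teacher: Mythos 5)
Your proof is correct, but it runs along a slightly different track than the paper's. Both arguments start from the same exact triangle $N \xrightarrow{x} N \to N \otimes_R K(x,R) \to N[1]$ (equivalently, the degreewise split exact sequence you use), but the paper applies $\RHom_R(k,-)$ rather than $\Gamma_\m$: since $x \in \m$ annihilates $k$, multiplication by $x$ is the zero map on every $\Ext^i_R(k,N)$, so the long exact sequence collapses into short exact sequences
\[0 \to \Ext^i_R(k,N) \to \Ext^i_R(k,N \otimes_R K(x,R)) \to \Ext^{i+1}_R(k,N) \to 0,\]
from which both inequalities of the equality $\depth_R(N \otimes_R K(x,R)) = \depth_R(N)-1$ fall out simultaneously from the $\Ext$-characterization of depth in Definition~\ref{depthdef}. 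Your route works with local cohomology, where $x$ does not act as zero, so you need two extra ingredients the paper avoids: Proposition~\ref{depthlemma}(1) for the lower bound, and the $\m$-power-torsion argument (minimal $k$ with $x^k\eta=0$) to show the kernel of $x$ on $H^{d_0}_\m(N)$ is nonzero for the upper bound. That torsion step is valid because $H^{d_0}_\m(N)$ is a subquotient of $\Gamma_\m(I)$, as you say, and your identification of the connecting maps with multiplication by $x$ (up to sign) via the cone description is the standard fact; the sign ambiguity is harmless since only the kernel is used. So your argument is a legitimate, slightly longer alternative; the payoff of the paper's choice of functor is that the annihilation of $k$ by $x$ makes the splitting automatic, whereas your choice forces you to detect nonvanishing by hand. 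The ``in particular'' reduction via $K(x,R)$ being a free resolution of $R/x$ is identical in spirit to how the lemma is used in the paper.
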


\begin{proof}
Applying $\RHom_R(k,-)$ to the exact triangle
\[N \xrightarrow{x} N \rightarrow N \otimes_R K(x,R) \rightarrow N[1]\]
gives another exact triangle
\[\RHom_R(k,N) \xrightarrow{x} \RHom_R(k,N) \rightarrow \RHom_R(k,N \otimes_R K(x,R)) \rightarrow \RHom_R(k,N)[1].\]

Since $x \in \m$, the induced long exact sequence in homology breaks into short exact sequences
\[0 \rightarrow H^i(\RHom_R(k,N)) \rightarrow H^i(\RHom_R(k,N \otimes_R K(x,R))) \rightarrow H^{i+1}(\RHom_R(k,N)) \rightarrow 0,\]
and the claim follows from the definition of $\depth$.

\end{proof}

We make use of the following lemma:
\begin{lem}\label{depthcopies}
Let $F:0 \to F_a \to F_{a-1} \to \cdots \to F_b \to 0$ be a complex of free $R$-modules and let $M$ be an $R$-complex. Then $\depth_R(M \otimes^L_R F) \ge \depth_R(M)-a$. 
\end{lem}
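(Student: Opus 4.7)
The plan is to proceed by induction on the length $\ell := a - b$, peeling off the top term of $F$ at each step via a brutal truncation.

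For the base case $\ell = 0$, the complex $F$ is concentrated in homological degree $a$ with $F_a \cong R^r$ for some $r \geq 0$, so $F \cong R^r[a]$ and $M \otimes^L_R F \simeq M^r[a]$. It then suffices to verify the shift formula $\depth_R(X[a]) = \depth_R(X) - a$ for any $R$-complex $X$, which is an immediate consequence of Definition \ref{depthdef}: if $I$ is an injective resolution of $X$, then $I[a]$ is an injective resolution of $X[a]$, and unwinding the grading conventions yields $\Ext^i_R(k, X[a]) \cong \Ext^{i+a}_R(k, X)$ for every $i$.

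For the inductive step, the brutal truncation $\tau_{\geq a}(F) \cong F_a[a]$ from the preamble of Section \ref{background} gives a short exact sequence of complexes
$$0 \to F' \to F \to F_a[a] \to 0,$$
where $F' := 0 \to F_{a-1} \to \cdots \to F_b \to 0$ is a complex of free modules of length $\ell - 1$. In each homological degree at most one of $F'$ and $F_a[a]$ is nonzero, so the sequence is degreewise split and remains exact after applying $M \otimes_R -$; moreover, since $F$ and $F'$ are termwise free, the underived and derived tensor products agree. Applying Proposition \ref{depthlemma}(1) and combining the inductive hypothesis $\depth_R(M \otimes^L_R F') \geq \depth_R(M) - (a-1)$ with the base case bound $\depth_R(M \otimes^L_R F_a[a]) = \depth_R(M) - a$ yields
$$\depth_R(M \otimes^L_R F) \geq \min\{\depth_R(M) - (a-1),\; \depth_R(M) - a\} = \depth_R(M) - a,$$
completing the induction. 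There is no real obstacle here: the shift formula in the base case is a bookkeeping exercise, and the inductive step is a single application of the depth lemma to a degreewise split short exact sequence.
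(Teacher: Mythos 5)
Your proof is correct and follows essentially the same route as the paper's: induction on $a-b$, the brutal truncation short exact sequence $0 \to F' \to F \to \tau_{\ge a}(F) \to 0$, and Proposition \ref{depthlemma}. The only difference is that you spell out the shift formula $\depth_R(X[a])=\depth_R(X)-a$ in the base case, which the paper treats as clear.
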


\begin{proof}
We proceed by induction on $a-b$. The claim is clear if $a-b=0$. If $a-b>0$, then we have an exact sequence of complexes
\[0 \to \tilde{F} \to F \rightarrow \tau_{\ge a}(F) \to 0\]
where $\tilde{F}:=0 \to F_{a-1} \to \cdots \to F_b \to 0$, which in turn induces an exact sequence
\[0 \to M \otimes^L_R \tilde{F} \to M \otimes^L_R F \to M \otimes^L_R \tau_{\ge a}(F) \to 0.\]. From induction hypothesis we have $\depth_R(M \otimes^L_R \tilde{F}) \ge \depth_R(M)-a+1$ and from the base case we have $\depth_R(M \otimes^L_R \tau_{\ge a}(F))=\depth_R(F_a) \ge \depth_R(M)-a$. It follows from Proposition \ref{depthlemma} that $\depth_R(M \otimes^L_R F) \ge \depth_R(M)-a$, and the claim follows.
\end{proof}

\begin{theorem}\label{derivedldepvsformodules}
Suppose $R$ is a local ring with $t:=\depth(R)$. Then
\begin{enumerate}
\item[$(1)$] $R$ satisfies derived $\ldep$ if and only if $R$ satisfies derived $\ldep$ for modules.
\item[$(2)$] If $t>0$, then $R$ satisfies derived $\rdep$ if and only if $R$ satisfies derived $\rdep$ for modules.
\end{enumerate}
\end{theorem}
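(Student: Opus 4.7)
The forward directions of (1) and (2) are immediate from Remark \ref{derivedimplies}: any pair of finitely generated modules $M,N$ with $q_R(M,N)<\infty$ already satisfies the boundedness hypotheses of the derived conditions on complexes, so derived $\ldep$ (respectively derived $\rdep$) for complexes specializes to the corresponding condition for modules.

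For the reverse direction of (1), the plan is to reduce the complex case to the module case via truncation of semi-free resolutions. Given complexes $M,N$ with $M$, $N$, and $M \otimes^L_R N$ all bounded, take a semi-free resolution $F^M \to M$ bounded below, and for an integer $\ell > \sup\{i : H_i(M) \ne 0\}$ form the short exact sequence of complexes
\[
0 \to \tau_{<\ell}(F^M) \to F^M \to \tau_{\ge\ell}(F^M) \to 0.
\]
The third term is quasi-isomorphic to a shifted syzygy module $\Omega_\ell[\ell]$, and for $\ell$ sufficiently large $\Omega_\ell$ is MCM and Tor-independent with an analogous high syzygy $\Omega'_m$ of a semi-free resolution of $N$ (this last point follows from a hypertor spectral sequence argument using boundedness of $M \otimes^L_R N$). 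Tensoring the displayed SES with a semi-free resolution of $N$ preserves exactness, and I would then apply Proposition \ref{depthlemma} together with Lemma \ref{depthcopies} (which bounds the depth of $\tau_{<\ell}(F^M)\otimes^L_R N$ using that $\tau_{<\ell}(F^M)$ is a bounded complex of free modules in degrees $[0,\ell-1]$) and derived $\ldep$ for modules (applied to the syzygy pair $(\Omega_\ell,\Omega'_m)$) to propagate the codepth inequality. The argument takes the form of an interleaved double induction on the amplitudes of $M$ and $N$, with base case the pure module-module scenario covered by hypothesis.

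The hardest step is that Proposition \ref{depthlemma} yields only $\codepth(Y) \le \max\{\codepth(X),\codepth(Z)\}$ for a short exact sequence $0 \to X \to Y \to Z \to 0$, and this maximum may strictly exceed $\codepth(Y)$; the truncation level $\ell$ must be chosen so that the tightness of this inequality is controlled in the configurations arising from the induction. For the reverse direction of (2), the same template would apply, but the hypothesis $t>0$ is essential: it supplies a regular element $x\in \m$, and Lemma \ref{depthcutdown} then permits a depth shift of $-1$ via tensoring with $K(x,R)$, furnishing the extra flexibility needed to handle the $\rdep$ direction. Without positive depth, Example \ref{derivedrdepArtinian} shows that derived $\rdep$ for modules holds automatically in the Artinian case while derived $\rdep$ for complexes is genuinely stronger, so the equivalence would fail. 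The principal obstacle in (2) is that Proposition \ref{depthlemma} gives its inequalities in the unfavorable direction for $\rdep$, so the argument requires more intricate case analysis combined with the Koszul depth shift supplied by the regular element.
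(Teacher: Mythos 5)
Your forward directions are fine and match the paper (they are immediate, as in Remark \ref{derivedimplies}). Your reverse direction has the right skeleton --- split a free resolution by a brutal truncation, identify the quotient with a shifted module, and combine Proposition \ref{depthlemma}, Lemma \ref{depthcopies}, and the module-level hypothesis --- but the pivotal normalization that makes this bookkeeping work is missing, and you yourself flag the resulting difficulty as unresolved. Because you truncate at a level $\ell$ \emph{above} $\sup\{i \mid H_i(M)\ne 0\}$, the free piece $\tau_{<\ell}(F^M)$ contains all the degrees where the homology of $M$ lives, and Lemma \ref{depthcopies} only gives $\depth_R(\tau_{<\ell}(F^M)\otimes^L_R N)\ge \depth_R(N)-(\ell-1)$, a bound that degrades as $\ell$ grows; the minimum in Proposition \ref{depthlemma} then cannot return $\depth_R(M)+\depth_R(N)-t$, and no concrete mechanism is supplied for the promised ``double induction on amplitudes'' to repair this. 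The paper's proof avoids the problem by first shifting so that $\sup H(M)=\sup H(N)=0$ and then \emph{reducing to} $\depth_R(M)=\depth_R(N)=0$ by iteratively tensoring with Koszul complexes $K(x,R)$ on elements $x$ regular on the top homology $H_0$ (Lemmas \ref{negativeqr} and \ref{depthcutdown}); after that, one truncates at degree $0$ exactly, so the free tails $\tilde F,\tilde G$ sit in strictly negative degrees and Lemma \ref{depthcopies} gives the \emph{high} bounds $\depth_R(\tilde F\otimes^L_R N)\ge 1$, $\depth_R(C_M\otimes^L_R\tilde G)\ge 1$, while $\depth_R(C_M)=\depth_R(C_N)=0$ and $q_R(C_M,C_N)<\infty$; only then do the depth-lemma minima resolve to give both the lower bound in (1) and the upper bound in (2). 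Without the depth-zero reduction there is no general comparison between $\depth_R(\Omega_\ell)+\depth_R(\Omega'_m)-t$ and the target quantity, so as written the argument does not close.

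Two further points. First, the claims that the high syzygies are MCM and Tor-independent are both unnecessary and problematic: $R$ is not assumed Cohen-Macaulay in this theorem, so ``MCM'' is not even available, and the module hypothesis only needs $q_R<\infty$; moreover, proving Tor-independence of high syzygies would itself require first establishing $q_R(C_M,C_N)<\infty$, which is exactly what the truncation sequences are for. Second, the role of $t>0$ in (2) is misidentified: the Koszul cutting uses elements regular on $H_0(N)$ (available whenever $\depth_R(N)>0$, irrespective of $t$) and is needed in both parts; what $t>0$ actually buys is the strict inequality $\depth_R(C_M\otimes^L_R C_N)\le -t<0$, strictly below the depth ($\ge 1$) of the free-tail terms, which forces via Proposition \ref{depthlemma} the equalities $\depth_R(M\otimes^L_R N)=\depth_R(C_M\otimes^L_R N)=\depth_R(C_M\otimes^L_R C_N)$, so that the \emph{upper} bound propagates; when $t=0$ this strictness is lost, which is precisely why the Artinian case (Example \ref{derivedrdepArtinian} together with Remark \ref{significance}) separates the two conditions.
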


\begin{proof}
Derived $\ldep$ (resp. derived $\rdep$) clearly implies derived $\ldep$ for modules (resp. derived $\rdep$ for modules) and  as noted in Remark \ref{derivedimplies}. For the converses, we begin with a general setup applicable to both $(1)$ and $(2)$. Let $M$, $N$ be $R$-complexes for which $M$, $N$ and $M \otimes^L_R N$ have bounded homology. By shifting, we may suppose $\sup\{i \mid H_i(M) \ne 0\}=\sup\{i \mid H_i(N) \ne 0\}=0$. Let $F$ and $G$ be right bounded free resolutions of $M$ and $N$ respectively.

Consider the natural short exact sequence
\[\textbf{(i)}:0 \to \tilde{G} \to G \to \tau_{\ge 0}(G) \to 0.\]
Then $\tau_{\ge 0}(G) \simeq C_N$ where $C_N$ is an $R$-module, and we have a short exact sequence
\[\textbf{(ii)}: 0 \to M \otimes^L_R \tilde{G} \to M \otimes^L_R N \to M \otimes^L_R C_N \to 0.\]
It follows from the long exact sequence in homology induced by $\textbf{(ii)}$ that $M \otimes^L_R C_N$ has bounded homology. From Lemma \ref{depthcopies}, we have $\depth_R(\tilde{G}) \ge t+1$ and $\depth_R(M \otimes^L_R \tilde{G}) \ge \depth_R(M)+1$. If $\depth_R(N)>0$, then it follows from Lemma \ref{negativeqr} that $\depth_R(H_0(N))>0$. Let $x \in \m$ be a nonzerodivisor on $H_0(N)$ and let $K(x,R)$ denote the Koszul complex on $x$. The exact triangle 
\[N \xrightarrow{x} N \rightarrow N \otimes_R K(x,R) \rightarrow N[1]\]
induces the long exact sequence
\[\begin{tikzpicture}[descr/.style={fill=white,inner sep=1.5pt}]
        \matrix (m) [
            matrix of math nodes,
            row sep=1em,
            column sep=1.8em,
            text height=1.5ex, text depth=0.25ex
        ]
        { H_0(N) & H_0(N) & H_0(N \otimes_R K(x,R)) & \cdots \\
           H_1(N) & H_1(N) & H_1(N \otimes_R K(x,R)) & \mbox{} \\
          H_2(N)  & H_2(N) & H_2(N \otimes_R K(x,R)) & \mbox{} \\
          \mbox{}  & \mbox{} & \hspace{2cm} & \mbox{} \\
        };

        \path[overlay,->, font=\scriptsize,>=latex]
        (m-1-1) edge node[above]{$\cdot x$} (m-1-2)
        (m-1-2) edge (m-1-3)
        (m-1-3) edge (m-1-4);
        \path[overlay,->, font=\scriptsize,>=latex]
        (m-2-3) edge[out=365,in=185] (m-1-1)
        (m-2-2) edge (m-2-3)
        (m-2-1) edge (m-2-2)
        (m-3-3) edge[out=365,in=185] (m-2-1);
        \path[overlay,->, font=\scriptsize,>=latex]
        (m-3-2) edge (m-3-3)
        (m-3-1) edge (m-3-2);
        \path[overlay,->, font=\scriptsize,>=latex]
        (m-4-3) edge[out=365,in=185,dashed] (m-3-1);   
\end{tikzpicture}\]
Since $H_i(N)=0$ for $i>0$ and since $H_0(N) \xrightarrow{ \cdot x} H_0(N)$ is injective, it follows from Nakayama's lemma that $\sup\{i \mid H_i(N \otimes_R K(x,R)) \ne 0\}=0$. By Lemma \ref{depthcutdown}, we have $\depth_R(N \otimes_R K(x,R))=\depth_R(N)-1$ and $\depth_R(M \otimes^L_R N \otimes_R K(x,R))=\depth_R(M \otimes^L_R N)-1$.

To prove the desired claims, we may apply the argument above iteratively to suppose $\depth_R(N)=0$. Similarly, consider the exact sequence \[0 \to \tilde{F} \to F \to \tau_{\ge 0}(F) \to 0,\]
we have $\tau_{\ge 0}(F) \simeq C_M$ where $C_M$ is an $R$-module, and we may apply a similar argument to suppose that $\depth_R(M)=0$. Then it follows from Proposition \ref{depthlemma} that $\depth_R(C_M)=\depth_R(C_N)=0$.

Next, we have exact sequences
\[\textbf{(iii)}: 0 \to \tilde{F} \otimes^L_R N \to M \otimes^L_R N \to C_M \otimes^L_R N \to 0\] and
\[\textbf{(iv)}: 0 \to C_M \otimes^L_R \tilde{G} \to C_M \otimes^L_R N \to C_M \otimes^L_R C_N \to 0.\]
It follows from the long exact sequences in homology induced by these that $C_M \otimes^L_R N$ and $C_M \otimes^L_R C_N$ have bounded homology, so that $q_R(C_M,C_N)<\infty$, and we have $\depth_R(\tilde{F}) \ge t+1$, $\depth_R(\tilde{F} \otimes^L_R N) \ge 1$, and $\depth_R(C_M \otimes^L_R \tilde{G}) \ge 1$ from Lemma \ref{depthcopies}.

Now, for $(1)$, if $R$ satisfies derived $\ldep$ for modules then $\depth_R(C_M \otimes^L_R C_N) \ge -t$. Applying Proposition \ref{depthlemma} to $\textbf{(iv)}$ gives that $\depth_R(C_M \otimes^L_R N) \ge -t$, and applying it to $\textbf{(iii)}$ then gives that $\depth_R(M \otimes^L_R N) \ge -t=0+0-t$. Thus $R$ satisfies derived $\ldep$.

For $(2)$, if $R$ satisfies derived $\rdep$ for modules and $t>0$, then $\depth_R(C_M \otimes^L_R C_N) \le -t<0$. Then applying Proposition \ref{depthlemma} to $\textbf{(iv)}$, we see that $\depth_R(C_M \otimes^L_R N)=\depth_R(C_M \otimes^L_R C_N) \le -t$. Similarly, applying Lemma \ref{depthlemma} to $\textbf{(iii)}$ gives that $\depth_R(M \otimes^L_R N)=\depth_R(C_M \otimes^L_R N) \le -t=0+0-t$. Thus $R$ satisfies derived $\rdep$, completing the proof.
\end{proof}

\section{The $\ldep$ and Derived $\ldep$ conditions}\label{ldepsection}

Our primary goal in this section is to prove Theorem \ref{ldepintro}. Before proceeding, we need to understand how the derived $\ldep$ condition behaves with respect to modding out regular sequences.

\begin{prop}\label{ldepcutsdown}
Suppose $R$ is a local ring and that $x \in \m$ is a nonzerodivisor. Then $R$ satisfies derived $\ldep$ if and only if $R/x$ satisfies derived $\ldep$.
\end{prop}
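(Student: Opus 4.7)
The plan is to translate the derived $\ldep$ inequality back and forth between $R$ and $\bar{R} := R/x$ using two ingredients: the depth shift $\depth_R(Y \otimes^L_R \bar{R}) = \depth_R(Y) - 1$ for bounded $R$-complexes $Y$ (Lemma \ref{depthcutdown}, since $\bar{R} \simeq K(x,R)$), and the base-change invariance $\depth_R = \depth_{\bar{R}}$ on $\bar{R}$-complexes (Proposition \ref{baseindep}). The key derived-category identity underlying both directions is that $\bar{R} \otimes^L_R \bar{R} \simeq \bar{R} \oplus \bar{R}[1]$, coming from the Koszul resolution of $\bar{R}$; consequently, for any $\bar{R}$-complex $Y$,
\[ Y \otimes^L_R \bar{R} \;\simeq\; Y \otimes^L_{\bar{R}} (\bar{R} \otimes^L_R \bar{R}) \;\simeq\; Y \oplus Y[1]. \]
Combined with $\depth_R(Y[1]) = \depth_R(Y) - 1$, this again gives a depth drop of exactly $1$, now computed intrinsically in the derived category. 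Note also that $\depth(\bar{R}) = \depth(R) - 1$ since $x$ is regular on $R$.

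For the forward direction, suppose $R$ satisfies derived $\ldep$, and let $\bar{M}, \bar{N}$ be $\bar{R}$-complexes with bounded homology such that $\bar{M} \otimes^L_{\bar{R}} \bar{N}$ has bounded homology. View them as $R$-complexes. Associativity of derived tensor products and the Koszul identity above give
\[ \bar{M} \otimes^L_R \bar{N} \;\simeq\; (\bar{M} \otimes^L_{\bar{R}} \bar{N}) \oplus (\bar{M} \otimes^L_{\bar{R}} \bar{N})[1], \]
which has bounded homology and satisfies $\depth_R(\bar{M} \otimes^L_R \bar{N}) = \depth_{\bar{R}}(\bar{M} \otimes^L_{\bar{R}} \bar{N}) - 1$. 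Applying derived $\ldep$ over $R$ to the pair $(\bar{M}, \bar{N})$ and using Proposition \ref{baseindep} to rewrite the right-hand side in terms of $\depth_{\bar{R}}$, this becomes
\[ \depth_{\bar{R}}(\bar{M} \otimes^L_{\bar{R}} \bar{N}) - 1 + \depth(R) \;\ge\; \depth_{\bar{R}}(\bar{M}) + \depth_{\bar{R}}(\bar{N}), \]
which is precisely derived $\ldep$ over $\bar{R}$ after substituting $\depth(\bar{R}) = \depth(R) - 1$.

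For the converse, suppose $\bar{R}$ satisfies derived $\ldep$, and let $M, N$ be $R$-complexes with bounded homology such that $M \otimes^L_R N$ has bounded homology. Set $\bar{M} := M \otimes^L_R \bar{R}$ and $\bar{N} := N \otimes^L_R \bar{R}$. The exact triangles $M \xrightarrow{x} M \to \bar{M} \to M[1]$ (and its analogue for $N$) show that $\bar{M}$ and $\bar{N}$ have bounded homology over $\bar{R}$, and $\bar{M} \otimes^L_{\bar{R}} \bar{N} \simeq (M \otimes^L_R N) \otimes^L_R \bar{R}$ is likewise bounded. Three applications of Lemma \ref{depthcutdown} give
\[ \depth_R(\bar{M}) = \depth_R(M) - 1, \quad \depth_R(\bar{N}) = \depth_R(N) - 1, \quad \depth_R(\bar{M} \otimes^L_{\bar{R}} \bar{N}) = \depth_R(M \otimes^L_R N) - 1, \]
and Proposition \ref{baseindep} identifies each with the corresponding $\depth_{\bar{R}}$. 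Plugging these into derived $\ldep$ over $\bar{R}$ and cancelling the matching $-1$'s on both sides recovers derived $\ldep$ over $R$.

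The proof is essentially bookkeeping once the identity $\bar{R} \otimes^L_R \bar{R} \simeq \bar{R} \oplus \bar{R}[1]$ and the depth shift from Lemma \ref{depthcutdown} are in hand; the only point requiring mild care is verifying that the bounded-homology hypotheses on derived tensor products transfer between $R$ and $\bar{R}$, which is handled by the Koszul triangles above.
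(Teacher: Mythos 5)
Your converse direction (assuming $R/x$ satisfies derived $\ldep$, deducing it for $R$) is essentially identical to the paper's: pass to $\bar M := M \otimes^L_R R/x$ and $\bar N := N \otimes^L_R R/x$ via the multiplication-by-$x$ triangles, apply derived $\ldep$ over $R/x$, and cancel the shifts via Lemma \ref{depthcutdown} and Proposition \ref{baseindep}. The forward direction is where you diverge, and your argument is both correct and cleaner. The paper constructs, via Lemma \ref{ses} (lifting a free $R/x$-resolution of $N$ to $R$ using the operator $t^e$ of \cite{BJ20}), a short exact sequence of complexes $0 \to (M \otimes^L_{R/x} N)[-1] \to M \otimes^L_R N \to M \otimes^L_{R/x} N \to 0$, then applies Proposition \ref{depthlemma} to extract the depth equality. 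You instead note that since $x$ kills $\bar R = R/x$, the Koszul complex $\bar R \otimes_R K(x,R)$ has zero differential, giving $\bar R \otimes^L_R \bar R \simeq \bar R \oplus \bar R[1]$, and hence by associativity $\bar M \otimes^L_R \bar N \simeq (\bar M \otimes^L_{\bar R} \bar N) \oplus (\bar M \otimes^L_{\bar R} \bar N)[1]$ for $\bar R$-complexes $\bar M$, $\bar N$. This exhibits the paper's short exact sequence as split in the derived category, so the boundedness transfer and the depth shift $\depth_R(\bar M \otimes^L_R \bar N) = \depth_{\bar R}(\bar M \otimes^L_{\bar R} \bar N) - 1$ fall out immediately from $\depth(Y \oplus Y[1]) = \depth(Y) - 1$, bypassing both Lemma \ref{ses} and the depth lemma. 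Both proofs are valid; yours trades the lifting machinery for elementary Koszul bookkeeping, which is a genuine simplification here (though Lemma \ref{ses} is still used elsewhere in the paper, e.g. for Proposition \ref{cutsdown}).
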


We note in particular that Proposition \ref{ldepcutsdown} does not require $R$ to be CM. Before proceeding with the proof of Proposition \ref{ldepcutsdown}, we need the following Lemma which allows for a change of rings argument in a higher level of generality than can be easily obtained from the standard spectral sequence argument. This lemma will also be used for a similar analysis on the derived $\rdep$ condition in the next section:
\begin{lem}\label{ses}
Let $R$ be a local ring and let $x \in \m$ be a nonzerodivisor. Suppose $M$ and $N$ are $R/x$-complexes with bounded below homology. Then there is an exact sequence
\[0 \to (M \otimes^L_{R/x} N)[-1] \to M \otimes^L_R N \to M \otimes^L_{R/x} N \to 0.\]
\end{lem}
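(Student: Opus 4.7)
The plan is to realize the claimed short exact sequence at the level of chain complexes, not merely as a distinguished triangle in the derived category. The main ingredient is the Koszul complex $K := K(x, R) = \bigl(R \xrightarrow{x} R\bigr)$, concentrated in degrees $0$ and $1$; since $x$ is a nonzerodivisor on $R$, the augmentation $K \xrightarrow{\simeq} R/x$ is a free $R$-resolution. The decisive observation is that every component of the $R/x$-complex $M$ is annihilated by $x$, so the Koszul-direction differentials in the double complex underlying $M \otimes_R K$ vanish identically. After totalization, $M \otimes_R K$ thus splits on the nose as a direct sum of $R/x$-complexes $M \oplus M[-1]$, with $M$ in the lower Koszul degree and the shifted summand $M[-1]$ in the upper.

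From this splitting one reads off the tautological short exact sequence of $R/x$-complexes
\[
0 \to M[-1] \to M \otimes_R K \to M \to 0,
\]
where the inclusion is into the shifted summand and the projection is onto the unshifted one. Crucially, this is a genuine short exact sequence of complexes---not merely a distinguished triangle---because the vanishing of the Koszul differentials on $M \otimes_R K$ makes $M[-1]$ a literal subcomplex.

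Next, I would fix a bounded-below, degreewise $R/x$-projective resolution $P \xrightarrow{\simeq} N$ (which exists since $N$ has bounded-below homology). The functor $(-) \otimes_{R/x} P$ is exact on bounded-below $R/x$-complexes---each $P_j$ is $R/x$-flat, and bounded-belowness ensures each totalization degree involves only finitely many nonzero summands---so applying it to the sequence above produces a short exact sequence of $R$-complexes
\[
0 \to M[-1] \otimes_{R/x} P \to (M \otimes_R K) \otimes_{R/x} P \to M \otimes_{R/x} P \to 0.
\]

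Finally, I would identify each term in $D(R)$. The outer two terms represent $(M \otimes^L_{R/x} N)[-1]$ and $M \otimes^L_{R/x} N$ by the very definition of the derived tensor product via $P$, while the middle term is quasi-isomorphic to $M \otimes^L_R N$ via the base-change identification $M \otimes^L_R N \simeq (M \otimes^L_R R/x) \otimes^L_{R/x} N$ together with the quasi-isomorphism $M \otimes_R K \xrightarrow{\simeq} M \otimes^L_R R/x$ coming from $K \xrightarrow{\simeq} R/x$. The only real obstacle here is careful bookkeeping with the associativity of derived tensor products and the shift convention; the essential point---that $x$ annihilates $M$ and therefore causes $M \otimes_R K$ to split---is what produces the SES on the nose rather than only up to quasi-isomorphism.
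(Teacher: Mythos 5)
Your splitting $M\otimes_R K \cong M \oplus M[-1]$ is correct as a statement about complexes, but the final identification of the middle term fails, and this is a genuine gap rather than bookkeeping. The quasi-isomorphism $M\otimes_R K \simeq M\otimes^L_R R/x$ holds only in $D(R)$: the comparison with $F\otimes_R R/x$ (where $F$ is an $R$-flat resolution of $M$) passes through $F\otimes_R K$, which is not a complex of $R/x$-modules, so the comparison is not $R/x$-linear. The base-change identity $(M\otimes^L_R R/x)\otimes^L_{R/x} N \simeq M\otimes^L_R N$ uses the object $M\otimes^L_R R/x \in D(R/x)$, i.e.\ the representative $F\otimes_R R/x$; you are not entitled to substitute the $R/x$-complex $M\otimes_R K$ for it inside $-\otimes^L_{R/x}N$, because that functor only respects $R/x$-linear quasi-isomorphisms. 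Concretely, take $R=k[\![t]\!]$, $x=t^2$, $M=N=k$. Then $M\otimes_R K\cong k\oplus k[-1]$ with zero differential, so your middle term $(M\otimes_R K)\otimes_{R/x}P$ has $n$-th homology $\Tor^{R/x}_n(k,k)\oplus\Tor^{R/x}_{n-1}(k,k)$, which is nonzero in every degree $n\ge 0$, whereas $M\otimes^L_R N=k\otimes^L_R k$ has homology only in degrees $0$ and $1$. Equivalently, your split sequence would force $\Tor^R_n(M,N)\cong\Tor^{R/x}_n(M,N)\oplus\Tor^{R/x}_{n-1}(M,N)$ for all $n$, which fails here for $n\ge 2$.

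The obstruction is conceptual: the sequence in the lemma encodes the classical change-of-rings long exact sequence for a nonzerodivisor, whose connecting map is (a reduction of) the degree $-2$ Eisenbud-type operator, and this map is generically nonzero --- in the example above it is an isomorphism $\Tor^{R/x}_n(k,k)\to\Tor^{R/x}_{n-2}(k,k)$ for $n\ge 3$. Hence no argument producing a sequence that is split at the level of complexes (so split in $D(R/x)$) can have middle term $M\otimes^L_R N$ in general. This is exactly why the paper's proof works on the $N$ side: it lifts a free $R/x$-resolution of $N$ to $R$ and uses the operator $t^e$ of \cite{BJ20}, so that $R/x\otimes_R F^R_N$ is the cone of $\bar{t^e}$, a genuinely nonsplit extension of $F^{R/x}_N$ by its shift; your Koszul-side splitting erases precisely that twisting. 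To repair your approach you would need to resolve $M$ (or $N$) over $R$ and track the resulting twisted differential, which in substance reproduces the paper's argument.
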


\begin{proof}
Let $F^{R/x}_N$ be a bounded below complex of finitely generated free $R/x$-modules that is quasi-isomorphic to $N$. Let $(F,\partial^F)$ be a lift of $F^{R/x}_N$ to $R$, so $F$ is a graded $R$-module whose components are finitely generated free modules over $R$, and $d$ is a degree $-1$ graded $R$-endomorphism of $F$. It follows from \cite[Section 3]{BJ20} that there is a degree $-2$ endomorphism $t^e$ of $F$ so that we have a complex
\[F^R_N:= \cdots \rightarrow F_n \oplus F_{n+1} \xrightarrow{\begin{pmatrix} \partial_n^F & (-1)^{n+1}t^e \\ (-1)^nx & \partial^F_{n+1} \end{pmatrix}} F_{n-1} \oplus F_n \xrightarrow{\begin{pmatrix} \partial_{n-1}^F & (-1)^{n}t^e \\ (-1)^{n-1}x & \partial^F_{n} \end{pmatrix}} F_{n-2} \oplus F_{n-1} \rightarrow \cdots\]
that is quasi-isomorphic to $N$, and moreover that $\bar{t^e}:F^{R/x}_N \to F^{R/x}_N$ is a chain map. It follows that $R/x \otimes_R F^R_N \cong \cone(\bar{t^e})$ and there is a thus a short exact sequence of $R/x$-complexes
\[0 \to F^{R/x}_N[-1] \to R/x \otimes_R F^R_N \to F^{R/x}_N \to 0.\]
The claim follows from applying $M \otimes^L_{R/x} -$ and noting that $M \otimes^L_{R/x} (R/x \otimes_R F_N) \simeq M \otimes^L_{R} N$.  
\end{proof}

\begin{proof}[Proof of Proposition \ref{ldepcutsdown}]
First suppose $R/x$ satisfies derived $\ldep$ and let $M$ and $N$ be $R$-complexes such that $M,N$, and $M \otimes^L_R N$ have bounded homology. From the exact sequences
\[0 \rightarrow M \xrightarrow{\cdot x} M \rightarrow M \otimes^L_R R/x \rightarrow 0\]
and 
\[0 \rightarrow N \xrightarrow{\cdot x} N \rightarrow N \otimes^L_R R/x \rightarrow 0\]
we see that $M \otimes^L_R R/x$ and $N \otimes^L_R R/x$ have bounded homology. Moreover, we have $(M \otimes^L_R R/x) \otimes^L_{R/x} (N \otimes^L_R R/x) \simeq (M \otimes^L_R N) \otimes^L_R R/x$ which also has bounded homology from the short exact sequence
\[0 \rightarrow M \otimes^L_R N \xrightarrow{\cdot x} M \otimes^L_R N \rightarrow (M \otimes^L_R N) \otimes^L_R R/x \rightarrow 0.\]
Since $R/x$ satisfies derived $\ldep$, we have
\[\depth_{R/x}((M \otimes^L_R N) \otimes^L_R R/x)=\depth_{R/x}((M \otimes^L_R R/x) \otimes^L_{R/x} (N \otimes^L_R R/x))\]
\[\ge \depth_{R/x}(M \otimes^L_R R/x)+\depth_{R/x}(N \otimes^L_R R/x)-\depth(R/x)\]
\[=\depth_{R/x}(M \otimes^L_R R/x)+\depth_{R/x}(N \otimes^L_R R/x)-\depth(R)+1.\]
But then it follows from Lemma \ref{depthcutdown} that
\[\depth_R(M \otimes^L_R N) \ge \depth_R(M)+\depth_R(N)-\depth(R),\]
so $R$ satisfies derived $\ldep$.

 For the converse, suppose $R$ satisfies derived $\ldep$, and let $M$ and $N$ be $R/x$-complexes for which $M$, $N$, and $M \otimes^L_{R/x} N$ have bounded homology. From Lemma \ref{ses} we have a short exact sequence
\[(*):0 \to (M \otimes^L_{R/x} N)[-1] \to M \otimes^L_R N \to M \otimes^L_{R/x} N \to 0.\]
It follows the exact sequence (*) that $M \otimes^L_R N$ has bounded homology, and since $R$ satisfies derived $\ldep$, we have 
\[\depth_R(M \otimes^L_R N) \ge \depth_R(M)+\depth_R(N)-\depth(R).\]
But applying Proposition \ref{depthlemma} to (*), we see that $\depth_R(M \otimes^L_R N)=\depth_{R/x}(M \otimes^L_{R/x} N)-1$, so
\[\depth_R(M \otimes^L_{R/x} N) \ge \depth_{R}(M)+\depth_{R}(N)-\depth(R)+1=\depth_{R/x}(M)+\depth_{R/x}(N)-\depth(R/x),\]
and it follows that $R/x$ satisfies derived $\ldep$.
\end{proof}

Combining Proposition \ref{ldepcutsdown} with \cite[Lemma 2.10]{KL23} we have as an immediate consequence that derived $\ldep$ behaves well under completion.

\begin{cor}\label{ldepcompletes}
If $R$ is a local ring, then $R$ satisfies derived $\ldep$ if and only if $\hat{R}$ satisfies derived $\ldep$. 
\end{cor}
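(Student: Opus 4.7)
The plan is to chain together two bi-implications, producing the equivalence between derived $\ldep$ for $R$ and derived $\ldep$ for $\hat{R}$ from the two inputs advertised in the cue. Proposition \ref{ldepcutsdown} states that derived $\ldep$ is preserved in both directions when modding out by a single nonzerodivisor in the maximal ideal, and hence, by iteration, when modding out by any regular sequence. The missing piece is a mechanism to connect $R$ to $\hat{R}$ via such a sequence, which is precisely the role of \cite[Lemma 2.10]{KL23}: it provides an auxiliary local ring $T$ equipped with a local homomorphism from $R$ such that $\hat{R} \cong T/(\underline{y})$ for some $T$-regular sequence $\underline{y} \subset \mathfrak{m}_T$, and such that derived $\ldep$ transfers between $R$ and $T$ in both directions (for instance by realizing $T$ itself as a power series extension of $R$, where the equivalence with $R$ reduces to yet another instance of Proposition \ref{ldepcutsdown} applied to the extending variables).

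With these two inputs in hand, the argument is almost purely formal. First invoke \cite[Lemma 2.10]{KL23} to identify derived $\ldep$ for $R$ with derived $\ldep$ for $T$. Then peel off the elements of $\underline{y} = y_1, \dots, y_n$ one at a time: since $\underline{y}$ is $T$-regular, each $y_{i+1}$ remains a nonzerodivisor on $T/(y_1, \dots, y_i)$, so Proposition \ref{ldepcutsdown} applies at every stage and gives successively that $T$ satisfies derived $\ldep$ if and only if $T/(y_1)$ does, if and only if $T/(y_1, y_2)$ does, and so on up to $T/(\underline{y}) = \hat{R}$. Concatenating these biconditionals with the equivalence between $R$ and $T$ produces the corollary.

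The only step requiring genuine thought is the choice and verification of the intermediate ring $T$ from \cite[Lemma 2.10]{KL23}; once its existence and the preservation of derived $\ldep$ under $R \leadsto T$ are granted, the rest is a mechanical concatenation of biconditionals, which is why the paper advertises the corollary as ``immediate.'' No new input about modules or complexes over $\hat{R}$ is needed beyond what Proposition \ref{ldepcutsdown} already supplies.
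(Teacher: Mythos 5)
Your proposal is correct and matches the paper's intended argument: the paper's ``immediate consequence'' is exactly the chain you describe, using \cite[Lemma 2.10]{KL23} to write $\hat{R}$ as a quotient of a power series extension of $R$ by a regular sequence, with Proposition \ref{ldepcutsdown} applied repeatedly both to adjoin the variables and to peel off the regular sequence.
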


We are now ready to prove the main theorem of this section:

\begin{theorem}\label{ldepthm}
Let $R$ be a CM ring of dimension $d$. Consider the following conditions:
\begin{enumerate}
\item[$(1)$] $R$ satisfies derived $\ldep$. 
\item[$(2)$] $R$ satisfies derived $\ldep$ for modules. 
\item[$(3)$] For all finite length modules $A,B$, and setting $M:=\Omega^d_R(A)$ and $N:=\Omega^d_R(B)$, if $q_R(M,N)<\infty$, then $M \otimes^L_R N$ is MCM.
\item[$(4)$] If $M,N$ are MCM $R$-modules with $q_R(M,N)<\infty$, then $q_R(M,N)=0$ and $M \otimes_R N$ is MCM.
\item[$(5)$] If $M$ and $N$ are $R$-modules with $b_R(M,N)<\infty$, then $b_R(M,N) \le \codepth_R(M)$. 
\item[$(6)$] $R$ satisfies $\uac$ with $b_R=d$.
\item[$(7)$] For MCM $R$-modules $M$ and $N$, if $b_R(M,N)<\infty$, then $b_R(M,N)=0$.
\item[$(8)$] $R$ satisfies $\ldep$.
    
\end{enumerate}
Then we have the following:
\begin{enumerate}[label=(\upshape{\Roman*})]
\item Conditions $(1)-(6)$ are equivalent.
\item Conditions $(1)-(6)$ imply $(7)$ and the converse holds if $R$ admits a canonical module $\w_R$. 
\item Conditions $(1)-(6)$ imply $(8)$ and the converse holds if $\dim(R)>0$. 
    
\end{enumerate}
\end{theorem}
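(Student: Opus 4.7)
My plan is to prove Part (I) via the cycle
\[
(1) \Leftrightarrow (2) \Rightarrow (3) \Rightarrow (4) \Rightarrow (5) \Rightarrow (6) \Rightarrow (4) \Rightarrow (2),
\]
and then handle Parts (II) and (III) separately using that (7) and (8) are weaker variants of (4) and (1) respectively. The equivalence $(1) \Leftrightarrow (2)$ is exactly Theorem \ref{derivedldepvsformodules}(1). The implication $(2) \Rightarrow (3)$ is immediate: with $A, B$ of finite length, $M := \Omega^d_R(A)$ and $N := \Omega^d_R(B)$ are MCM, so derived $\ldep$ for modules gives $\depth_R(M \otimes^L_R N) \ge d$, and Proposition \ref{mcmcomplex} upgrades this to MCM-ness. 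The implication $(4) \Rightarrow (2)$ follows from Proposition \ref{replacesyz}: for modules $M, N$ with $q_R(M, N) < \infty$ the desired depth inequality is equivalent to $\Omega^{\codepth_R(M)}_R(M) \otimes^L_R \Omega^{\codepth_R(N)}_R(N)$ being MCM, and these are MCM modules with finite $q_R$ (via Lemma \ref{torcutdown}), so (4) applies.

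For $(3) \Rightarrow (4)$: given MCM $M, N$ with $q_R(M, N) < \infty$, two applications of Lemma \ref{torcutdown}(1) yield $q_R(\tilde M, \tilde N) < \infty$. Since $M/\ux M$ and $N/\ux N$ have finite length, hypothesis (3) produces $\tilde M \otimes^L_R \tilde N$ MCM, and Lemma \ref{cutdownMCM} then pulls this back to $q_R(M, N) = 0$ together with $M \otimes_R N$ MCM. For $(4) \Rightarrow (5)$: given $b_R(M, N) < \infty$, pass to the $r$-th syzygy $M' := \Omega^r_R(M)$ with $r := \codepth_R(M)$, so $M'$ is MCM and it suffices to show $b_R(M', N) = 0$; after completing so that a canonical module $\w_R$ exists and using Propositions \ref{exttorall} and \ref{mcmext} to translate Ext-vanishing on the MCM pair $(M', N)$ into a Tor-vanishing statement on associated MCM modules, (4) gives the desired vanishing. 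For $(5) \Rightarrow (6)$: (5) directly gives $b_R \le d$, so $R$ satisfies $\uac$, while $b_R(R/\ux, R) = d$ (via the Koszul complex) yields $b_R \ge d$, pinning $b_R = d$.

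The main obstacle is $(6) \Rightarrow (4)$. After completion (Corollary \ref{ldepcompletes}), I may assume $R$ admits a canonical module $\w_R$. Given MCM $M, N$ with $q_R(M, N) < \infty$, the MCM-ness of $N$ gives $N \simeq \RHom_R(N^\vee, \w_R)$, so by adjunction $\RHom_R(M, N) \simeq \RHom_R(M \otimes^L_R N^\vee, \w_R)$; this yields a hyperExt spectral sequence with $E_2$-page $\Ext^p_R(\Tor^R_q(M, N^\vee), \w_R)$ abutting to $\Ext^*_R(M, N)$. Using the uniform bound $b_R = d$, the vanishing of $\Ext^p_R(-, \w_R)$ for $p > d$, and careful depth control of the intermediate $\Tor^R_q(M, N^\vee)$ (controlled since $M$ and $N^\vee$ are MCM with finite $q_R$), the spectral sequence forces $q_R(M, N^\vee) = 0$; Proposition \ref{exttorall} then delivers $q_R(M, N) = 0$ and $M \otimes_R N$ MCM, establishing (4) and closing the cycle.

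For Parts (II) and (III): $(4) \Rightarrow (7)$ and $(1) \Rightarrow (8)$ are tautological. For $(7) \Rightarrow (4)$ when $R$ has $\w_R$: given MCM $M, N$ with $q_R(M, N) < \infty$, the spectral sequence above (together with the CM hypothesis on $R$) produces $b_R(M, N^\vee) < \infty$; since $N^\vee$ is MCM, (7) forces $b_R(M, N^\vee) = 0$, and Proposition \ref{exttorall} returns (4). For $(8) \Rightarrow (2)$ when $\dim R > 0$: given modules $M, N$ with $q_R(M, N) < \infty$, take $M' := \Omega^s_R(M)$ with $s \ge q_R(M, N)$ so that $q_R(M', N) = 0$; (8) gives $\codepth_R(M' \otimes_R N) \le \codepth_R(M') + \codepth_R(N)$, and iteratively applying Proposition \ref{depthlemma} to the short exact sequences $0 \to \Omega^{i+1}_R(M) \otimes_R N \to F_i \otimes_R N \to \Omega^i_R(M) \otimes_R N \to 0$ (valid for $i \ge s$ where the relevant higher Tors vanish), together with Lemma \ref{depthcopies} to control the intermediate levels, propagates the bound down to $M \otimes^L_R N$; Theorem \ref{derivedldepvsformodules} then upgrades derived $\ldep$ for modules to derived $\ldep$.
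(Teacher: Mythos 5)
Your high-level architecture is close to the paper's, and several steps are correct ($(1)\Leftrightarrow(2)$, $(2)\Rightarrow(3)$, $(4)\Rightarrow(2)$, $(5)\Rightarrow(6)$), but there are multiple genuine gaps in the middle and at the edges.

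\textbf{The main gap is $(6)\Rightarrow(4)$.} You justify passing to the completion via Corollary \ref{ldepcompletes}, but that corollary concerns \emph{derived $\ldep$}, which is precisely what you have not yet established from $(6)$ — this is circular. It is not obvious that $\uac$ with $b_R=d$ ascends to $\hat R$. The paper sidesteps this by phrasing the pivot as condition $(3)$, which manifestly ascends and descends (since $\Omega^d_R(A)\otimes_R \hat R\cong\Omega^d_{\hat R}(A)$). Beyond that, your spectral-sequence sketch never establishes $q_R(M,N^\vee)<\infty$ from $q_R(M,N)<\infty$, and even granting that, it is unclear how ``depth control'' collapses the spectral sequence to force $q_R(M,N^\vee)=0$; the final invocation of Proposition \ref{exttorall} is also misaligned (that proposition links $b_R(M,N^\vee)=0$ to the conclusion, not $q_R(M,N^\vee)=0$). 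The paper instead proves $(6)\Rightarrow(5)$ by a clean syzygy-shift argument using Lemma \ref{torcutdown}(2), and then $(5)\Rightarrow(3)$ using the adjunction $\RHom_R(M\otimes^L_R N,\w_R)\simeq\RHom_R(M,N^\vee)$ together with $b_R\le d$; no spectral sequence is needed.

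\textbf{$(4)\Rightarrow(5)$ is broken as stated.} Condition $(5)$ allows $N$ to be an arbitrary module, so ``the MCM pair $(M',N)$'' is not an MCM pair, and Propositions \ref{exttorall}/\ref{mcmext} require $N$ (or its dual) to be MCM. The paper avoids this by deducing $(5)$ from $(1)$ via local duality: $\RHom_R(M,N)\simeq\RHom_R(M\otimes^L_R\RHom_R(N,\w_R),\w_R)$, then derived $\ldep$ applied to the complex $\RHom_R(N,\w_R)$, whose depth is $d$ by local duality. If you want a module-theoretic route, you would need to pass through MCM approximations (as the paper does for $(7)\Rightarrow(5)$), which you don't mention.

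\textbf{$(3)\Rightarrow(4)$ is missing two steps.} First, Lemma \ref{cutdownMCM} requires a genuine tensor product, so you must first argue $q_R(\tilde M,\tilde N)=0$ from $\tilde M\otimes^L_R\tilde N$ MCM — the paper uses Lemma \ref{negativeqr} plus finite length of the top Tor (since $\tilde M$ is locally free on the punctured spectrum) to rule out $q_R(\tilde M,\tilde N)>0$. Second, Lemma \ref{cutdownMCM} only delivers $\Tor^R_{1\le i\le d}(M,N)=0$, not $q_R(M,N)=0$; the paper closes the gap by rerunning the argument with $N$ replaced by $\Omega^j_R(N)$ for all $j$.

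\textbf{$(4)\Rightarrow(7)$ is not tautological.} Condition $(4)$ concerns $q_R$ and $(7)$ concerns $b_R$; passing between them requires Proposition \ref{exttorall} (hence a canonical module, hence passing to the completion) and a dimension shift.

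\textbf{$(8)\Rightarrow(2)$ does not propagate the way you describe.} You take $s\ge q_R(M,N)$, apply $\ldep$ to $(\Omega^s_R(M),N)$, and try to push the bound back down. But when $s>\codepth_R(M)$ the syzygy module is already MCM with codepth $0$, and unwinding $s$ steps via the depth lemma only gives $\codepth_R(M\otimes^L_R N)\le\codepth_R(N)+s$, which is weaker than the target $\codepth_R(M)+\codepth_R(N)$. The paper's converse in Part (III) instead proves $\ldep\Rightarrow(4)$ when $d>0$ by reducing to $q_R(M,N)=1$, producing an inclusion $\Tor^R_1(\tilde M,N)\hookrightarrow\Omega^1_R(\tilde M)\otimes_R N$ where the left side has finite length and the right side has positive depth — that tension is essential and absent from your sketch.
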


\begin{proof}

We first note that $(1) \iff (2)$ is the content of Theorem \ref{derivedldepvsformodules}, while $(2) \Rightarrow (3)$ is clear from Proposition \ref{mcmcomplex}, so we turn our attention to $(3) \Rightarrow (4)$. Assuming the condition of $(3)$, suppose $M$ and $N$ are MCM $R$-modules with $q_R(M,N)<\infty$. Note the claim is clear from Lemma \ref{negativeqr} if $d=0$, so we may suppose $d>0$.  As in the previous section, we let $\underline{x}$ be a maximal regular sequence in $R$ and set $\tilde{M}:=\Omega^d_R(M/\underline{x}M)$ and $\tilde{N}:=\Omega^d_R(N/\underline{x}N)$. By Lemma \ref{torcutdown}, we have that $q_R(\tilde{M},\tilde{N})<\infty$. Then the condition of $(3)$ forces $\tilde{M} \otimes^L_R \tilde{N}$ to be MCM. If $q_R(\tilde{M},\tilde{N})>0$, then since $\tilde{M}$ is locally free on the punctured spectrum, $\depth_R(\Tor^R_{q_R(\tilde{M},\tilde{N})}(\tilde{M},\tilde{N}))=0$, and Lemma \ref{negativeqr} would force $\depth_R(\tilde{M} \otimes^L_R \tilde{N})=-q_R(\tilde{M},\tilde{N})<0$, contradicting that $\tilde{M} \otimes^L_R \tilde{N}$ is MCM. It follows that $q_R(\tilde{M},\tilde{N})=0$, which gives as well that $\tilde{M} \otimes_R \tilde{N}$ is MCM. Then Lemma \ref{cutdownMCM} gives that $\Tor^R_{1 \le i \le d}(M,N)=0$ and that $M \otimes_R N$ is MCM. But we may repeat the argument above replacing $N$ by $\Omega^j_R(N)$ for any $j \ge 0$, which shows that $\Tor^R_{1 \le i \le d}(M,\Omega^j_R(N)) \cong \Tor^R_{1+j \le i \le d+j}(M,N)=0$ for all $j$. Thus $q_R(M,N)=0$ as desired.

We now show $(4) \Rightarrow (2)$. Suppose the condition of $(4)$ and suppose $M$ and $N$ are $R$-modules with $q_R(M,N)<\infty$. Then $q_R(\Omega^{\codepth_R(M)}_R(M),\Omega^{\codepth_R(N)}_R(N))<\infty$, so the condition of $(4)$ forces $q_R(\Omega^{\codepth_R(M)}_R(M),\Omega^{\codepth_R(N)}_R(N))=0$ and that $\Omega^{\codepth_R(M)}_R(M) \otimes_R \Omega^{\codepth_R(N)}_R(N)$ is MCM. In particular, this means $\Omega^{\codepth_R(M)}_R(M) \otimes^L_R \Omega^{\codepth_R(N)}_R(N)$ is MCM and the claim follows from Proposition \ref{replacesyz}.
We have thus established the equivalence of $(1)-(4)$.

We now show $(4) \Rightarrow (7)$. We note that the condition of $(3)$ obviously ascends to and descends from the completion since $\Omega^d_R(A) \otimes_R \hat{R} \cong \Omega^d_{\hat{R}}(A)$ for any finite length module $A$, while the condition of $(6)$ obviously descends from the completion. So we may suppose $R$ is complete and so in particular that $R$ admits a canonical module $\w_R$. Suppose $b_R(M,N)<\infty$. Then $b_R(\Omega^{b_R(M,N)}_R(M),N)=0$ and Proposition \ref{exttorall} implies that $q_R(\Omega^{b_R(M,N)}_R(M),N^{\vee})=0$. It follows that $q_R(M,N^{\vee})<\infty$ and then the condition of $(3)$ forces $q_R(M,N^{\vee})=0$ and that $M \otimes_R N$ is MCM. But then appealing to Proposition \ref{exttorall} again gives that $b_R(M,N)=0$. 

Next we show $(7) \Rightarrow (4)$ under the assumption that $R$ admits a canonical module. Suppose $q_R(M,N)<\infty$. Then $q_R(\Omega^{q_R(M,N)}_R(M),N)=0$. Applying $- \otimes_R N$ to part of a minimal free resolution $F$ of $\Omega^{q_R(M,N)}_R(M)$, we get an exact sequence of the form
\[0 \to \Omega^{q_R(M,N)+d}_R(M) \otimes_R N \to F_d \otimes_R N \to \cdots \to F_0 \otimes_R N \to \Omega^{q_R(M,N)}_R(M) \otimes_R N \to 0.\]
It follows from the depth lemma that $\Omega^{q_R(M,N)+d}_R(M) \otimes_R N$ is MCM. Then Proposition \ref{exttorall} implies that $b_R(\Omega^{q_R(M,N)+d}_R(M),N)=0$, and it follows that $b_R(M,N^{\vee})<\infty$. Then the condition of $(5)$ forces $b_R(M,N^{\vee})=0$, and Proposition \ref{exttorall} forces $q_R(M,N)=0$ and that $M \otimes_R N$ is MCM.

Next we show that $(5)$ is equivalent $(1)-(4)$. To see this, since $(3)$ ascends to and descends from the completion, so too do conditions $(1)$, $(2)$, and $(4)$, while it is known from \cite[Remark 5.7]{CH10} that $(5)$ ascends to and descends from the completion as well. We may thus suppose $R$ is complete so that it admits a canonical module. We first show $(1) \Rightarrow (5)$. Suppose $M$ and $N$ are $R$-modules with $b_R(M,N)<\infty$. We have 
\[\RHom_R(M,N) \simeq \RHom_R(M,\RHom_R(\RHom_R(N,\w_R),\w_R)) \simeq  \RHom_R(M \otimes^L_R \RHom_R(N,\w_R),\w_R).\]
Since $N$ and $\RHom_R(M,N)$ have bounded homology, so do $\RHom_R(N,\w_R)$ and $M \otimes^L_R \RHom_R(N,\w_R)$.  Applying local duality \cite[3.4.1]{IM21}, we see that $\depth_R(\RHom_R(N,\w_R))=d$, and from the derived $\ldep$ condition of $(1)$, we have $\depth_R(M \otimes^L_R \RHom_R(N,\w_R)) \ge \depth_R(M)+\depth_R(\RHom_R(N,\w_R))-d=\depth_R(M)$. Applying local duality again, we get $b_R(M,N) \le \codepth_R(M)$.

We now show $(5) \Rightarrow (3)$. Suppose $A$ and $B$ are finite length $R$-modules, set $M:=\Omega^d_R(A)$ and $N:=\Omega^d_R(B)$, and suppose $q_R(M,N)<\infty$. Then as $N$ is MCM, we have $\RHom_R(M \otimes^L_R N,\w_R) \simeq \RHom_R(M,\RHom_R(N,\w_R)) \simeq \RHom_R(M,N^{\vee})$, which has bounded homology since $M \otimes^L_R N$ does, i.e., $b_R(M,N^{\vee})<\infty$. But then $b_R(M,N^{\vee})=b_R(A,N^{\vee})+d$, and as $b_R=d$, we must have $b_R(M,N^{\vee})=0$. It follows from Proposition \ref{exttorall}, that $q_R(M,N)=0$ and $M \otimes_R N$ is MCM, so $M \otimes^L_R N$ is MCM as desired. 

Note that $(5) \Rightarrow (6)$ is clear, since $\codepth_R(M) \le d$ for any $R$-module $M$. Conversely, suppose $b_R=d$, and take $M$,$N$ to be $R$-modules with $b_R(M,N)<\infty$. If $t:=b_R(M,N)>\codepth_R(M)$, then letting $L=\Omega^{\codepth_R(M)}_R(M)$, we have $0<b_R(L,N)<\infty$ and that $L$ is MCM. Then from Lemma \ref{torcutdown}, we have $b_R(L,N)=b_R(\tilde{L},N)=b_R(L/\underline{x}L,N)+d>d$, contradicting that $b_R=d$, so we now have that $(1)-(6)$ are equivalent. We also note that $(5)$ clearly implies $(7)$, so we have established items $(I)$ and $(II)$. 

We now show $(7) \Rightarrow (5)$ under the assumption that $R$ admits a canonical module $\w_R$. Suppose $M$ and $N$ are $R$-modules with $\codepth_R(M)<b_R(M,N)<\infty$. Then as $b_R(\Omega^{\codepth_R(M)}_R(M),N)=b_R(M,N)-\codepth_R(M)$, it suffices to show the claim when $M$ is MCM. Since $R$ has a canonical module, we may take an MCM approximation of $N$ (see \cite[Proposition 11.3]{LW12}), that is, a short exact sequence
\[0 \to Y \to L \to N \to 0\]
with $\id_R(Y)<\infty$ and $L$ MCM. 
Since $M$ is MCM and since $\id_R(Y)<\infty$, it follows (see e.g \cite[Definition 11.8]{LW12}) that $b_R(M,Y)=0$. Then applying $\Hom_R(M,-)$ to this exact sequence and considering the long exact sequence in $\Ext$ shows that $\Ext^i_R(M,L) \cong \Ext^i_R(M,N)$ for all $i>0$. In particular, $0<b_R(M,L)<\infty$, contradicting the condition of $(7)$.

Note that $(1) \Rightarrow (7)$ follows from Remark \ref{derivedimplies}.

Then to conclude the proof, we show $(7) \Rightarrow (4)$ when $d>0$. Indeed, if $M$ and $N$ are MCM $R$-modules with $q_R(M,N)<\infty$, then it suffices to show that $q_R(M,N)=0$, and if this is not the case then we may replace $M$ by $\Omega^{q_R(M,N)-1}_R(M)$ to suppose that $q_R(M,N)=1$. It follows from Lemma \ref{torcutdown} that $q_R(\tilde{M},N) \le 1$. In particular, $q_R(\Omega^1_R(\tilde{M}),N)=0$, and as $R$ satisfies $\ldep$, it follows that $\Omega^1_R(\tilde{M}) \otimes_R N$ is MCM. Applying $- \otimes_R N$ to the short exact sequence 
\[0 \to \Omega^1_R(\tilde{M}) \to R^{\oplus \mu_R(\tilde{M})} \to \tilde{M} \to 0\]
we observe there is an inclusion $\Tor^R_1(\tilde{M},N) \hookrightarrow \Omega_R^1(\tilde{M}) \otimes_R N$. But $\Tor^R_1(\tilde{M},N)$ has finite length since $\tilde{M}$ is locally free on the punctured spectrum of $R$, while $\Omega^1_R(M) \otimes_R N$ is MCM, and thus has positive depth since $d>0$. It can only be that $\Tor^R_1(\tilde{M},N)=0$ so that $q_R(\tilde{M},N)=0$. Since $R$ satisfies $\ldep$, it follows that $\tilde{M} \otimes_R N$ is MCM, and the claim follows from Proposition \ref{cutdownMCM}. 
\end{proof}

\begin{cor}\label{ldepcomplete}
Suppose $R$ is CM with $\dim(R):=d$ and let $x \in R$ be a regular element. Then the following hold:
\begin{enumerate}
\item[$(1)$] $R$ satisfies derived $\ldep$ for modules if and only if $R/(x)$ satisfies derived $\ldep$ for module. 
\item[$(2)$] If $R$ satisfies $\ldep$, then $R/(x)$ satisfies $\ldep$. The converse holds if $d>1$.
\item[$(3)$] $R$ satisfies derived $\ldep$ for modules if and only if $\hat{R}$ does.
\item[$(4)$] $R$ satisfies $\ldep$ if and only if $\hat{R}$ does.
\end{enumerate}

\end{cor}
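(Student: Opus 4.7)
The plan is to reduce each part to our previously-established statements on derived $\ldep$, namely Proposition \ref{ldepcutsdown} and Corollary \ref{ldepcompletes}, using Theorem \ref{ldepthm} as the bridge between $\ldep$ and its derived variant. Throughout, $R/(x)$ and $\hat R$ are both CM whenever $R$ is, so Theorem \ref{ldepthm} is available on these rings as well.

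For parts (1) and (3), I would first invoke the equivalence of derived $\ldep$ and derived $\ldep$ for modules (Theorem \ref{ldepthm}(I)) on the relevant pairs of rings ($R$ and $R/(x)$ for (1), $R$ and $\hat R$ for (3)). Proposition \ref{ldepcutsdown} and Corollary \ref{ldepcompletes} then yield the desired biconditionals immediately. No obstacle beyond translation.

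For part (2), since $x\in\m$ is a nonzerodivisor, we automatically have $d\ge 1$. Theorem \ref{ldepthm}(III) then tells us that $\ldep$ and derived $\ldep$ coincide on $R$, so Proposition \ref{ldepcutsdown} transfers $\ldep$ on $R$ to derived $\ldep$ on $R/(x)$, whence $\ldep$ on $R/(x)$ by Remark \ref{derivedimplies}. For the converse under $d>1$ we have $\dim(R/(x))=d-1>0$, so Theorem \ref{ldepthm}(III) upgrades $\ldep$ on $R/(x)$ to derived $\ldep$ on $R/(x)$; Proposition \ref{ldepcutsdown} then transfers this back to derived $\ldep$ on $R$, which forces $\ldep$ on $R$ by Remark \ref{derivedimplies}. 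The hypothesis $d>1$ in the converse is essential, since when $d=1$ the ring $R/(x)$ is Artinian and satisfies $\ldep$ trivially (every module has depth zero), so no information about $R$ can be extracted.

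Finally, for part (4), if $d=0$ then both $R$ and $\hat R$ are Artinian and satisfy $\ldep$ trivially. If $d>0$, we chain equivalences: $R$ satisfies $\ldep$ iff $R$ satisfies derived $\ldep$ (Theorem \ref{ldepthm}(III)) iff $\hat R$ satisfies derived $\ldep$ (Corollary \ref{ldepcompletes}) iff $\hat R$ satisfies $\ldep$ (Theorem \ref{ldepthm}(III) for $\hat R$, noting $\dim(\hat R)=d>0$). The main obstacle anywhere in the corollary is simply keeping careful track of the dimension hypothesis so that Theorem \ref{ldepthm}(III) can be invoked; all the substantive work has already been carried out in Theorem \ref{ldepthm}, Proposition \ref{ldepcutsdown}, and Corollary \ref{ldepcompletes}.
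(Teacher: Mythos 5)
Your proposal is correct and follows essentially the same route as the paper: the paper's proof is exactly to combine Theorem \ref{ldepthm} (to pass between $\ldep$, derived $\ldep$ for modules, and derived $\ldep$, minding the dimension hypotheses) with Proposition \ref{ldepcutsdown} and Corollary \ref{ldepcompletes}. Your added remarks on the $d>1$ hypothesis and the Artinian case are accurate and consistent with the paper's framework.
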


\begin{proof}

The claims follow immediately from combining Theorem \ref{ldepthm} with Corollary \ref{ldepcompletes}.
\end{proof}

\begin{cor}\label{ldepimpliestr}
Suppose $R$ CM with $\dim(R)>0$. If $R$ satisfies $\ldep$, then $R$ satisfies $\tr$.
\end{cor}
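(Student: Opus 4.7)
By Corollary~\ref{ldepcomplete} and the standard fact that the $\tr$ condition descends from the completion, I may assume $R$ is complete, so that a canonical module $\w_R$ is available; under our hypotheses all seven conditions of Theorem~\ref{ldepthm} are then in force. Given $M$ with $b_R(M,R)=0$, I first reduce to the MCM case: if $\pd_R M<\infty$ then $\pd_R M = b_R(M,R)=0$ so $M$ is free; otherwise $N:=\Omega_R^d(M)$ is MCM with $b_R(N,R)=0$, and proving $N$ is totally reflexive forces $G\text{-dim}_R(M)\le d<\infty$, which combined with $b_R(M,R)=0$ and the Auslander--Bridger formula forces $G\text{-dim}_R(M)=0$.

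Assume now $M$ is MCM with $b_R(M,R)=0$. Dualizing the minimal free resolution $F_\bullet \to M$ into $R$ produces the exact sequence
\[
0\to M^*\to F_0^*\to F_1^*\to F_2^*\to\cdots,
\]
and iterating Proposition~\ref{depthlemma} along the tail of this infinite exact sequence of MCM modules (whose deeply nested syzygies inherit depth $d$) shows both $M^*$ and $\Tr_R(M)=\coker(F_0^*\to F_1^*)$ are MCM. I then pivot on the following reduction: using the Auslander fundamental sequence
\[
0\to\Ext^1_R(\Tr M,R)\to M\to M^{**}\to\Ext^2_R(\Tr M,R)\to 0
\]
together with the dimension shift $\Ext^i_R(M^*,R)\cong\Ext^{i+2}_R(\Tr M,R)$ for $i\ge 1$ (coming from $\Omega_R^2(\Tr M)\simeq M^*$ stably), total reflexivity of $M$ is equivalent to $b_R(\Tr M,R)=0$. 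Since $\Tr M$ is MCM, condition~(7) of Theorem~\ref{ldepthm} reduces this further to the mere finiteness statement $b_R(\Tr M,R)<\infty$.

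To pin down this finiteness, I plan to combine Proposition~\ref{transposeextend} with Proposition~\ref{exttorall}. Taking $N=R$ in Proposition~\ref{transposeextend} immediately upgrades $M$ MCM to $\Tr M \otimes_R \w_R$ MCM; and Proposition~\ref{exttorall} applied to the MCM module $\Tr M$ with $N'=\w_R$ (so $N'{}^\vee=R$) re-encodes $b_R(\Tr M,R)=0$ as the conjunction $\Tr M\otimes_R\w_R$ MCM (which we already have) together with $q_R(\Tr M,\w_R)=0$. By condition~(4), applied to the MCM pair $(\Tr M,\w_R)$, this last vanishing in turn follows from $q_R(\Tr M,\w_R)<\infty$.

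The hardest step, and my expected main obstacle, is establishing this last finiteness $q_R(\Tr M,\w_R)<\infty$. My plan is an extra preliminary reduction using Lemma~\ref{torcutdown}: replace $M$ by $\tilde M=\Omega_R^d(M/\underline x M)$, which keeps $b_R(\tilde M,R)=b_R(M,R)=0$ but makes $\tilde M$, and hence $\Tr\tilde M$, locally free on the punctured spectrum, so that $\Tor^R_i(\Tr \tilde M,\w_R)$ has finite length for every $i\ge 1$. Then the MCM-ness of $\Tr\tilde M\otimes\w_R$ combined with Lemma~\ref{torextdual}, Proposition~\ref{mcmext}, and the uniform bound $b_R=d$ from condition~(6) should close the argument — Proposition~\ref{mcmext} handles indices $1\le i\le d$ directly, while the uniform bound and dimension-shifting via $\Omega_R^2(\Tr\tilde M)\simeq\tilde M^*$ propagate the vanishing to all $i$.
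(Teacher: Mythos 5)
Your reductions are mostly sound and genuinely different from the paper's argument: the paper disposes of this corollary in two lines, using Theorem \ref{ldepthm} to get $\uac$ with $b_R=d$ and then quoting \cite[Theorem C]{CH10} for the implication $\uac\Rightarrow\tr$. You instead try to reprove that implication internally, and several of your steps are correct and nicely done — the reduction to the complete, MCM case, the fact that $M^*$ and $\Tr M$ are MCM when $b_R(M,R)=0$, the equivalence ``$M$ totally reflexive $\iff b_R(\Tr M,R)=0$'' via the Auslander sequence and $\Omega^2\Tr M\approx M^*$, and the translation through Propositions \ref{transposeextend} and \ref{exttorall} (with $N=R$, resp.\ $N=\w_R$) reducing everything to the single statement $q_R(\Tr M,\w_R)<\infty$, which condition $(4)$/$(7)$ would then upgrade to $0$.

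The problem is that this last finiteness is exactly where the real content lies, and your plan does not establish it. After passing to $\tilde M$, the tools you list (Lemma \ref{torextdual}, Proposition \ref{mcmext}, local freeness on the punctured spectrum) yield only $\Ext^i_R(\Tr\tilde M,R)=0$ for $1\le i\le d$, i.e.\ a window of vanishing of length $d$; the uniform bound $b_R=d$ gives no leverage here, because $\uac$ only constrains pairs already known to have \emph{eventually} vanishing Ext, and there is no ``gap theorem'' in this paper (nor, outside the Gorenstein/AB setting where Tate cohomology is available, in the literature) promoting a finite window of vanishing to vanishing for all $i\gg0$. Dimension shifting via $\Omega^2\Tr\tilde M\approx\tilde M^*$ just rewrites the same window as $\Ext^j_R(\tilde M^*,R)=0$ for $1\le j\le d-2$ and does not propagate anything. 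In other words, the step you flag as your ``expected main obstacle'' is precisely the content of \cite[Theorem C]{CH10}, whose proof is a genuine construction and not a consequence of the propositions quoted here; as written your argument has a gap at that point. A secondary unaddressed issue: replacing $M$ by $\tilde M=\Omega^d_R(M/\underline{x}M)$ changes the module, and finiteness of $q_R(\Tr\tilde M,\w_R)$ says nothing directly about $q_R(\Tr M,\w_R)$; to make that reduction legitimate you would need to argue that total reflexivity of $\tilde M$ descends to $M$ (true, via G-dimension change of rings along the $M$-regular sequence $\underline{x}$, but it requires a separate argument you have not supplied).
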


\begin{proof}
If $R$ satisfies $\ldep$, then it follows from Theorem \ref{ldepthm} that $R$ satisfies $\uac$. That $R$ satisfies $\tr$ follows from \cite[Theorem C]{CH10}. 
\end{proof}

\begin{cor}\label{gorcase}
Suppose $R$ is Gorenstein. Then $\ldep$ implies $\rdep$. 
\end{cor}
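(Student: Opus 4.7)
The plan is to split on the dimension of $R$. If $\dim(R)=0$, then every finitely generated $R$-module $M$ has $\depth_R(M)=0$, so $\codepth_R(M)=0$ identically, and consequently both $\ldep$ and $\rdep$ are vacuously satisfied; there is nothing to prove in this case.

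So assume $d:=\dim(R)>0$. The first step is to use the hypothesis that $R$ satisfies $\ldep$ together with Theorem \ref{ldepthm}(III) (which is the part asserting that $\ldep$ implies $(1)$--$(6)$ once $d>0$) to conclude that $R$ satisfies $\uac$ with $b_R=d$. Since $R$ is Gorenstein and satisfies $\uac$, $R$ is by definition an AB-ring.

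The second step is to invoke \cite[Corollary 5.3]{CJ15}, which asserts that over an AB-ring the derived depth formula holds for every pair of $R$-complexes with bounded homology whose derived tensor product also has bounded homology; in particular one has the inequality
\[
\depth_R(M\otimes_R^L N)+\depth(R)\le \depth_R(M)+\depth_R(N)
\]
for any such pair. Specializing to finitely generated modules $M$ and $N$ with $q_R(M,N)=0$, we get $M\otimes_R^L N\simeq M\otimes_R N$ and the inequality above is exactly the defining inequality for $\rdep$. Therefore $R$ satisfies $\rdep$, completing the proof.

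The only nontrivial input is the passage from $\ldep$ to the AB condition, which is supplied by Theorem \ref{ldepthm}; the rest is a direct quotation of the Christensen--Jorgensen result. Thus no step is genuinely an obstacle here, and the corollary is a clean combination of the main theorem of the section with known machinery for AB-rings.
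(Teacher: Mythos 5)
Your proof is correct and follows essentially the same route as the paper: dispose of the case $\dim(R)=0$ trivially, use Theorem \ref{ldepthm} in positive dimension to upgrade $\ldep$ to $\uac$ (hence the AB property for Gorenstein $R$), and then quote the Christensen--Jorgensen result \cite[Corollary 5.3]{CJ15} that AB-rings satisfy the (derived) depth formula, which in particular yields $\rdep$. No gaps; this matches the paper's argument.
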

\begin{proof}
There is nothing to prove if $\dim(R)=0$, so we may suppose $\dim(R)>0$. Then by Theorem \ref{ldepthm}, $R$ is an AB-ring. But $\dep$ is known to hold for AB-rings from \cite[Corollary 5.3 (b)]{CJ15}, giving the claim.
\end{proof}

\begin{example}
Let $k$ be a field not algebraic over a finite field and let $\alpha \in k$ be an element of infinite multiplicative order. Consider the ring $A:=k[x_1,x_2,x_3,x_4,x_5]/I_{\alpha}$ where 
\[I_{\alpha}:=(\alpha x_1x_3+x_2x_3,x_1x_4+x_2x_4,x_3^2+\alpha x_1x_5-x_2x_5,x_4^2+x_1x_5-x_2x_5,x_1^2,x_2^2,x_3x_4,x_3x_5,x_4x_5,x_5^2).\]
Then
\begin{enumerate}
\item[$(1)$] $A$ does not satisfy derived $\ldep$.
\item[$(2)$] $A[\![t]\!]$ does not satisfy $\ldep$.
\item[$(3)$] There is a local domain $B$ of dimension $1$ for which $\hat{B} \cong A[\![t]\!]$, and thus $B$ does not satisfy $\ldep$.
\end{enumerate}

\begin{proof}
It follows from \cite{JS04} that $A$ does not satisfy $\uac$, and therefore by Theorem \ref{ldepthm} $A$ does not satisfy derived $\ldep$. It follows from Corollary \ref{ldepcomplete} that $A[\![t]\!]$ does not satisfy derived $\ldep$ and thus does not satisfy $\ldep$ by Theorem \ref{ldepthm}. It follows from \cite[Theorem 1]{Le86} that there is a local domain $B$ for which $\hat{B} \cong A[\![t]\!]$, and $B$ does not satisfy $\ldep$ by Theorem \ref{ldepcomplete}.
\end{proof}
\end{example}

\section{The $\rdep$ and derived $\rdep$ conditions}\label{rdepsection}

The goal of this section is to explore the counterpart to the work from Section \ref{ldepsection} for the $\rdep$ condition. As a beginning note to illustrate the distinction between the $\ldep$ and $\rdep$ conditions, we observe the following:

\begin{example}\label{derivedrdepArtinian}
If $R$ is Artinian, then $R$ satisfies derived $\rdep$ for modules.
\end{example}

\begin{proof}

Suppose $M,N$ are $R$-modules with $q_R(M,N)<\infty$. Since $R$ is Artinian, it follows from Lemma \ref{negativeqr} that we have $\depth_R(M \otimes^L_R N)=-q_R(M,N) \le 0=\depth_R(M)+\depth_R(N)-\depth(R)$, so $R$ satisfies derived $\rdep$. 
\end{proof}

In particular, Example \ref{derivedrdepArtinian} shows that one cannot hope for derived $\rdep$ for modules to behave well under modding out a regular sequence in general, however, as we will ultimately show that derived $\rdep$ for modules is equivalent to derived $\rdep$ for CM rings of positive dimension, the following will show the Artinian case provides the only obstruction:

\begin{prop}\label{cutsdown}
Suppose $R$ is local ring and that $x \in \m$ is a nonzerodivisor. Then $R$ satisfies derived $\rdep$ if and only if $R/x$ satisfies derived $\rdep$.
\end{prop}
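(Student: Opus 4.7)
The plan is to mirror the proof of Proposition \ref{ldepcutsdown} with every inequality reversed; the two directions are completely symmetric between $\ldep$ and $\rdep$ because the only ingredients invoked are Proposition \ref{depthlemma} (which supplies both upper and lower depth bounds from a short exact sequence), Lemma \ref{depthcutdown}, Proposition \ref{baseindep}, and Lemma \ref{ses}.

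For the forward direction (assume $R/x$ satisfies derived $\rdep$ and deduce it for $R$), I would take $R$-complexes $M,N$ with $M$, $N$, and $M\otimes^L_R N$ all having bounded homology. First I would verify that $M\otimes^L_R R/x$, $N\otimes^L_R R/x$, and $(M\otimes^L_R N)\otimes^L_R R/x$ each have bounded homology over $R/x$, using the short exact sequences induced by multiplication by $x$. Associativity of the derived tensor product then gives
\[
(M\otimes^L_R R/x)\otimes^L_{R/x}(N\otimes^L_R R/x)\simeq (M\otimes^L_R N)\otimes^L_R R/x,
\]
so derived $\rdep$ over $R/x$ yields an upper bound on $\depth_{R/x}$ of this object in terms of the depths of $M\otimes^L_R R/x$ and $N\otimes^L_R R/x$ and $\depth(R/x)$. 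Converting each side to depths over $R$ via Proposition \ref{baseindep}, decrementing by $1$ through Lemma \ref{depthcutdown}, and using $\depth(R/x)=\depth(R)-1$, the three offsets cancel and produce the required bound $\depth_R(M\otimes^L_R N) \le \depth_R(M)+\depth_R(N)-\depth(R)$.

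For the converse (assume $R$ satisfies derived $\rdep$ and deduce it for $R/x$), I would apply Lemma \ref{ses} to any $R/x$-complexes $M,N$ with bounded homology to obtain the short exact sequence
\[
0\to (M\otimes^L_{R/x} N)[-1]\to M\otimes^L_R N\to M\otimes^L_{R/x} N\to 0,
\]
which in particular forces $M\otimes^L_R N$ to have bounded homology. I would then apply Proposition \ref{depthlemma} to this sequence to extract the identity $\depth_R(M\otimes^L_R N)=\depth_{R/x}(M\otimes^L_{R/x} N)-1$; this is exactly the depth relation already implicit in the proof of Proposition \ref{ldepcutsdown}. Finally, derived $\rdep$ over $R$ supplies an upper bound on $\depth_R(M\otimes^L_R N)$, and rearranging (using base independence and $\depth(R/x)=\depth(R)-1$) gives $\depth_{R/x}(M\otimes^L_{R/x} N)\le \depth_{R/x}(M)+\depth_{R/x}(N)-\depth(R/x)$, as desired.

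If there is any obstacle, it is simply keeping the $[-1]$ shift in Lemma \ref{ses} straight when extracting depths via Proposition \ref{depthlemma}; however, since that proposition delivers both $\ge$ and $\le$ bounds, the depth identity we need is forced in both the $\ldep$ and $\rdep$ cases, and the two arguments really do run in parallel.
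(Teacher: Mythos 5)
Your proposal is correct and follows essentially the same route as the paper, which itself proves Proposition \ref{cutsdown} by repeating the argument of Proposition \ref{ldepcutsdown} mutatis mutandis: the forward direction via base change along $R \to R/x$, associativity of $\otimes^L$, and Lemma \ref{depthcutdown}, and the converse via the short exact sequence of Lemma \ref{ses} together with Proposition \ref{depthlemma} to force $\depth_R(M \otimes^L_R N)=\depth_{R/x}(M \otimes^L_{R/x} N)-1$. No gaps to report.
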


\begin{proof}
The proof is analogous to that of Proposition \ref{ldepcutsdown}, mutatis mutandis. First we suppose $R/x$ satisfies derived $\rdep$ and let $M$ and $N$ be $R$-complexes such that $M,N$, and $M \otimes^L_R N$ have bounded homology. From the exact sequences
\[0 \rightarrow M \xrightarrow{\cdot x} M \rightarrow M \otimes^L_R R/x \rightarrow 0\]
and 
\[0 \rightarrow N \xrightarrow{\cdot x} N \rightarrow N \otimes^L_R R/x \rightarrow 0\]
we see that $M \otimes^L_R R/x$ and $N \otimes^L_R R/x$ have bounded homology. Moreover, we have $(M \otimes^L_R R/x) \otimes^L_{R/x} (N \otimes^L_R R/x) \simeq (M \otimes^L_R N) \otimes^L_R R/x$ which also has bounded homology from the short exact sequence
\[0 \rightarrow M \otimes^L_R N \xrightarrow{\cdot x} M \otimes^L_R N \rightarrow (M \otimes^L_R N) \otimes^L_R R/x \rightarrow 0.\]
Since $R/x$ satisfies derived $\rdep$, we have
\[\depth_{R/x}((M \otimes^L_R N) \otimes^L_R R/x)=\depth_{R/x}((M \otimes^L_R R/x) \otimes^L_{R/x} (N \otimes^L_R R/x))\]
\[\le \depth_{R/x}(M \otimes^L_R R/x)+\depth_{R/x}(N \otimes^L_R N)-\depth(R/x)\]
\[=\depth_{R/x}(M \otimes^L_R R/x)+\depth_{R/x}(N \otimes^L_R N)-\depth(R)+1.\]
But then it follows from Lemma \ref{depthcutdown} that
\[\depth_R(M \otimes^L_R N) \le \depth_R(M)+\depth_R(N)-\depth(R),\]
so $R$ satisfies derived $\rdep$.

 For the converse, suppose $R$ satisfies derived $\rdep$, and let $M$ and $N$ be $R/x$-complexes for which $M$, $N$, and $M \otimes^L_{R/x} N$ have bounded homology. From Lemma \ref{ses} we have a short exact sequence
\[(*):0 \to (M \otimes^L_{R/x} N)[-1] \to M \otimes^L_R N \to M \otimes^L_{R/x} N \to 0.\]
It follows the exact sequence (*) that $M \otimes^L_R N$ has bounded homology, and since $R$ satisfies derived $\rdep$, we have 
\[\depth_R(M \otimes^L_R N) \le \depth_R(M)+\depth_R(N)-\depth(R).\]
But applying Proposition \ref{depthlemma} to (*), we see that $\depth_R(M \otimes^L_R N)=\depth_{R/x}(M \otimes^L_{R/x} N)-1$, so
\[\depth_R(M \otimes^L_{R/x} N) \le \depth_{R}(M)+\depth_{R}(N)-\depth(R)+1=\depth_{R/x}(M)+\depth_{R/x}(N)-\depth(R/x),\]
and it follows that $R/x$ satisfies derived $\rdep$.
\end{proof}

As with Proposition \ref{ldepcutsdown}, we note that Proposition \ref{cutsdown} does not require $R$ to be CM, and combining Proposition \ref{cutsdown} with \cite[Lemma 2.10]{KL23} we have as an immediate consequence that derived $\rdep$ behaves well under completion.

\begin{cor}\label{rdepcompletes}
If $R$ is a local ring, then $R$ satisfies derived $\rdep$ if and only if $\hat{R}$ satisfies derived $\rdep$. 
\end{cor}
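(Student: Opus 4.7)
Since this corollary is flagged as an ``immediate consequence,'' the plan is to chain Proposition \ref{cutsdown} with \cite[Lemma 2.10]{KL23}. Proposition \ref{cutsdown} shows that for any local ring $R$ and any nonzerodivisor $x \in \m$, the ring $R$ satisfies derived $\rdep$ if and only if $R/(x)$ does. A straightforward iteration extends this to modding out by any $R$-regular sequence contained in the maximal ideal: $R$ satisfies derived $\rdep$ if and only if $R/(x_1,\dots,x_n)$ does.

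The remaining ingredient is \cite[Lemma 2.10]{KL23}, which connects $R$ with $\hat{R}$ through a regular-sequence construction. Concretely, it exhibits an auxiliary local ring (for instance, built from a power-series extension of $R$) together with a regular sequence so that $\hat{R}$ is obtained from it, and likewise $R$ is obtained from it, by killing regular sequences. Feeding this construction into Proposition \ref{cutsdown} one element at a time propagates the derived $\rdep$ condition along the chain, giving the desired biconditional between $R$ and $\hat{R}$.

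I do not anticipate any real obstacle: both ingredients are already in hand, and the combination is exactly what the text advertises. The only compatibility one should verify is that at each step of the chain we are working with a nonzerodivisor in the maximal ideal of the current ring so that Proposition \ref{cutsdown} applies unchanged; this is built into the construction of \cite[Lemma 2.10]{KL23}, so no additional argument is needed.
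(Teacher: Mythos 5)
Your proposal matches the paper's own argument: the paper also obtains this corollary by combining Proposition \ref{cutsdown} (cutting down by a nonzerodivisor, iterated as needed) with \cite[Lemma 2.10]{KL23}, which supplies exactly the regular-sequence bridge between $R$ and $\hat{R}$ that you describe. No gaps; this is essentially the same proof.
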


To prove Theorem \ref{rdepintro}, which is the main theorem of this section, we need some additional preparatory results. For the first of these, we recall that the \emph{nonfree locus} $\NF(M)$ of $M$ is given as 
\[\NF(M):=\{\p \in \Spec(R) \mid M_{\p} \mbox{ is not free over } R_{\p}\} .\]
It is easy to see that $\NF(M)=V(\Ann_R(\Ext^1_R(M,\Omega^1_R(M))))$ and so in particular is a closed subset of $\Spec(R)$. We recall also that an $R$-module $M$ has constant rank $r$ if $M_{\p} \cong R_{\p}^{\oplus r}$ for all $\p \in \Ass(R)$, equivalently, if there is an exact sequence of the form $0 \to R^{\oplus r} \to M \to C \to 0$ where $\dim(C)<\dim(R)$.  

The construction and several claims in the next Lemma are owed to \cite[Proposition 4.2]{Ta09}:

\begin{lem}\label{pushforwardlemma}
Let $M$ be a finitely generated $R$-module and let $x \in \m$ be a nonzerodivisor on $R$. Let $F$ be a free cover of $M$ and consider the pushforward diagram:
\[\begin{tikzcd}[cramped]
	0 & {\Omega^1_R(M)} & F & M & 0 \\
	0 & {\Omega^1_R(M)} & L & M & 0.
	\arrow[from=1-1, to=1-2]
	\arrow[from=1-2, to=1-3]
	\arrow["{\cdot x}"', from=1-2, to=2-2]
	\arrow[from=1-3, to=1-4]
	\arrow[from=1-3, to=2-3]
	\arrow[from=1-4, to=1-5]
	\arrow[equals, from=1-4, to=2-4]
	\arrow[from=2-1, to=2-2]
	\arrow[from=2-2, to=2-3]
	\arrow[from=2-3, to=2-4]
	\arrow[from=2-4, to=2-5]
\end{tikzcd}\]
Then we have the following:
\begin{enumerate}
\item[$(1)$] $\depth_R(L)=\depth_R(M)$.
\item[$(2)$] For any finitely generated $R$-module $N$, $q_R(L,N) \le q_R(M,N)$.
\item[$(3)$] If $x$ is not contained in any minimal prime of $\NF(M)$, then $\dim(\NF(L))<\dim(\NF(M))$.
\item[$(4)$] If $x$ is a nonzerodivisor on $M$ and is not contained in any minimal prime of $\NF(M)$, then $\dim(\NF(\Omega^1_R(M/xM)))<\dim(\NF(M))$.
\item[$(5)$] $\rank(L)=\mu_R(M)$.
\end{enumerate}
\end{lem}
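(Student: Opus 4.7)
The pushout construction yields, beyond the two drawn rows, two more short exact sequences that organize the proof. Call them $(a)\colon 0 \to \Omega^1_R(M) \to L \to M \to 0$ (the bottom row) and $(b)\colon 0 \to F \to L \to \Omega^1_R(M)/x\Omega^1_R(M) \to 0$. The sequence $(b)$ arises from the snake lemma applied to the pushout square: the vertical map $x\colon \Omega^1_R(M) \to \Omega^1_R(M)$ is injective (since $x$ is regular on $\Omega^1_R(M) \subseteq F$) with cokernel $\Omega^1_R(M)/x\Omega^1_R(M)$, while the rightmost vertical map is the identity on $M$, so $F \hookrightarrow L$ has cokernel $\Omega^1_R(M)/x\Omega^1_R(M)$.

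For \emph{(1)}, Proposition~\ref{depthlemma} applied to $(a)$, combined with the standard bound $\depth_R(\Omega^1_R(M)) \ge \min\{\depth_R(M)+1,\, \depth R\}$, immediately gives $\depth_R(L) \ge \depth_R(M)$. For the upper bound, when $M$ is non-MCM I would apply $\RHom_R(k,-)$ to $(b)$: since $\Ext^i_R(k,F) = 0$ for $i < \depth R$, the long exact sequence forces $\Ext^i_R(k,L) \cong \Ext^i_R(k, \Omega^1_R(M)/x\Omega^1_R(M))$ in a sufficient range, and regularity of $x$ on $\Omega^1_R(M)$ gives $\depth_R(\Omega^1_R(M)/x\Omega^1_R(M)) = \depth_R(\Omega^1_R(M)) - 1 = \depth_R(M)$. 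When $M$ is MCM, $(a)$ alone forces $\depth_R(L) \ge \depth R$, hence equality. For \emph{(2)}, I would apply $-\otimes^L_R N$ to $(a)$ and use the dimension shift $\Tor^R_i(\Omega^1_R(M), N) \cong \Tor^R_{i+1}(M,N)$ for $i \ge 1$; for $i > q_R(M,N)$ both terms flanking $\Tor^R_i(L,N)$ in the long exact sequence vanish, so $\Tor^R_i(L,N)=0$.

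For \emph{(3)} and \emph{(4)}, the strategy is to establish $\NF(L) \subseteq \NF(M) \cap V(x)$ and $\NF(\Omega^1_R(M/xM)) \subseteq \NF(M) \cap V(x)$ by localizing. If $x \notin \p$, then $x$ is a unit at $\p$: localizing the pushout diagram makes $x\colon \Omega^1_R(M)_\p \to \Omega^1_R(M)_\p$ an isomorphism, so $L_\p \cong F_\p$ is free, while $(M/xM)_\p = 0$ makes $\Omega^1_R(M/xM)_\p$ free. If $\p \notin \NF(M)$, then $\Omega^1_R(M)_\p$ is free (as a summand of $F_\p$), so $(a)$ localized at $\p$ splits and $L_\p$ is free; and for (4) the regularity of $x$ on $M$ provides a length-one free resolution of $(M/xM)_\p$, showing $\Omega^1_R(M/xM)_\p$ is stably free. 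The hypothesis that $x$ avoids every minimal prime of $\NF(M)$, combined with prime avoidance, yields $\dim(\NF(M) \cap V(x)) < \dim \NF(M)$. For \emph{(5)}, at every minimal prime $\p$ of $R$ the element $x$ is a unit, so $L_\p \cong F_\p$ by the same localization, giving $\rank(L) = \rank(F) = \mu_R(M)$.

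The main obstacle I anticipate is the sharp equality in \emph{(1)}: the depth lemma alone yields only the lower bound from $(a)$, and the upper bound requires combining $(b)$ with the Ext-vanishing for $F$ in the non-MCM range, while the MCM boundary case must be handled through $(a)$. Getting the two arguments to match up cleanly at $\depth_R(M) = \depth R - 1$ demands some care.
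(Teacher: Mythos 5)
Parts (2)--(5) of your proposal are correct and essentially the paper's own arguments: (2) is the long exact sequence in $\Tor$ applied to the bottom row, (3) and (4) are the same localization analysis (the paper phrases the conclusion as ``no minimal prime of $\NF(M)$ lies in $\NF(L)$'' rather than via $\NF(M)\cap V(x)$, but the dimension count is identical), and for (5) the paper instead uses the snake-lemma sequence $0\to F\to L\to \Omega^1_R(M)/x\Omega^1_R(M)\to 0$ together with additivity of rank, while you localize directly at the primes where the nonzerodivisor $x$ is a unit; both are fine. The genuine problem is part (1), and it is exactly the point you flag and do not resolve: the case $\depth_R(M)=\depth(R)-1$. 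With $t:=\depth_R(M)<\depth(R)$, your sequence $(b)$ yields the exact sequence $0=\Ext^t_R(k,F)\to \Ext^t_R(k,L)\to \Ext^t_R(k,\Omega^1_R(M)/x\Omega^1_R(M))\to \Ext^{t+1}_R(k,F)$. When $t+1<\depth(R)$ the last term vanishes and you get $\Ext^t_R(k,L)\neq 0$; but when $t+1=\depth(R)$ you only get an injection into a nonzero module, and nothing in your argument rules out that the connecting map into $\Ext^{t+1}_R(k,F)$ is injective, which would leave $\Ext^t_R(k,L)=0$. Your MCM case (handled through $(a)$, and tacitly using $\depth_R(L)\le\dim(R)=\depth(R)$, fine under the standing CM hypothesis) does not cover this intermediate case, and depth-lemma bookkeeping on $(a)$ and $(b)$ alone is consistent with $\depth_R(L)=\depth(R)$ there. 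This is not a negligible corner: the paper applies the lemma precisely with $\depth_R(M)=0$ over rings of dimension one (Lemma \ref{reducetoconstantrank} and case (iv) of Theorem \ref{rdepvsderived}), which is exactly the unresolved boundary case.

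The paper closes (1) uniformly by a different device, which you may want to adopt: compare the two rows of the pushforward diagram. Applying $\Hom_R(k,-)$ to both rows, naturality of the connecting homomorphism gives that the connecting map $\Ext^t_R(k,M)\to\Ext^{t+1}_R(k,\Omega^1_R(M))$ for the bottom row equals the composite of the connecting map for the top row with multiplication by $x$; since $x\in\m$ annihilates the $k$-vector space $\Ext^{t+1}_R(k,\Omega^1_R(M))$, that connecting map is zero, hence $\Ext^t_R(k,L)\to\Ext^t_R(k,M)\neq 0$ is surjective and $\depth_R(L)\le t$ with no case distinction. If you prefer to keep your route through $(b)$, you must supply an argument that the map $\Ext^t_R(k,\Omega^1_R(M)/x\Omega^1_R(M))\to\Ext^{t+1}_R(k,F)$ has nonzero kernel when $t=\depth(R)-1$; the naturality argument above is the natural way to produce it, at which point $(b)$ is no longer needed for (1) at all.
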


\begin{proof}
For (1), let $t:=\depth_R(M)$. It follows from the depth lemma that $\depth_R(L) \ge t$. Applying $\Hom_R(k,-)$ and consider long exact sequences in $\Ext$, we have the following diagram with exact rows:
\[\begin{tikzcd}[cramped]
	{\Ext^t_R(k,F)} & {\Ext^t_R(k,M)} & {\Ext^{t+1}_R(k,\Omega^1_R(M))} \\
	{\Ext^t_R(k,L)} & {\Ext^t_R(k,M)} & {\Ext^{t+1}_R(k,\Omega^1_R(M))}
	\arrow[from=1-1, to=1-2]
	\arrow[from=1-1, to=2-1]
	\arrow[from=1-2, to=1-3]
	\arrow[equals, from=1-2, to=2-2]
	\arrow["{\cdot x}"', from=1-3, to=2-3]
	\arrow["f", from=2-1, to=2-2]
	\arrow[from=2-2, to=2-3]
\end{tikzcd}\]
Since multiplication by $x$ is the zero map on $\Ext^{t+1}_R(k,\Omega^1_R(M))$, it follows from commutativity of the diagram and exactness of the second row that $f$ is surjective. But $\Ext^t_R(k,M) \ne 0$, and so $\Ext^t_R(k,L) \ne 0$ as well, which gives that $\depth_R(L)=t$.

Claim $(2)$ follows at once from applying $- \otimes_R N$ to the exact sequence $0 \to \Omega^1_R(M) \to L\to M \to 0$ and considering the long exact sequence in $\Tor$.

For claim $(3)$, if $x$ is not contained in any minimal prime $\p$ of $\NF(M)$, then $x$ is a unit upon localizing at $\p$, and then the five lemma forces $F_{\p} \cong L_{\p}$. So no minimal prime of $\NF(M)$ is contained in $\NF(L)$. But if $M_{\p}$ is $R_{\p}$-free, then so is $\Omega^1_R(M)_{\p}$ and the sequence $0 \to \Omega^1_R(M)_{\p} \to L_{\p} \to M_{\p} \to 0$ splits, so $L_{\p}$ is $R_{\p}$-free as well. Thus $\NF(L) \subseteq \NF(M)$, giving the claim.  

For claim $(4)$, if $M_{\p}$ is $R_{\p}$-free, then $M_{\p}/xM_{\p}$ has projective dimension at most $1$ over $R_{\p}$, and since $\Omega^1_R(M/xM)_{\p}$ is an $R_{\p}$-syzygy of $M_{\p}/xM_{\p}$, it follows that it is $R_{\p}$-free. So $\NF(\Omega^1_R(M/xM)) \subseteq \NF(M)$, but if $\p$ is a minimal prime of $\NF(M)$, then since $x \notin \p$, we have $(M/xM)_{\p}=0$ and then $\Omega^1_R(M/xM)_{\p}$ is $R_{\p}$-free, so the claim follows.

Finally, for claim $(5)$, it follows from the snake lemma that there is an exact sequence $0 \to F \to L \to \Omega^1_R(M)/x\Omega^1_R(M) \to 0$. But $\dim(\Omega^1_R(M)/x\Omega^1_R(M))<\dim(R)$ and so has rank $0$. The claim follows from additivity of $\rank$. 
\end{proof}

We recall the following lemma that is well-known to experts. We give a short proof due to lack of a suitable reference:
\begin{lem}\label{constant}
Suppose $R$ satisfies Serre's condition $(S_2)$, e.g. $R$ is CM. If $M$ is an $R$-module that is locally free in codimension $1$, then $M$ has constant rank.
\end{lem}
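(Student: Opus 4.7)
The plan is first to show that the rank of $M_\p$ over $R_\p$ is constant as $\p$ ranges over $\Min(R)$, and then to use this to construct the desired short exact sequence. Since $(S_2)$ implies $(S_1)$, we have $\Ass(R)=\Min(R)$, and by hypothesis $M_\p$ is free over the Artinian local ring $R_\p$ for every $\p\in\Min(R)$.

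For the constancy of rank, I plan to invoke the classical theorem (due essentially to Hartshorne) that a local Noetherian ring satisfying $(S_2)$ is equidimensional and has $\Spec(R)$ connected in codimension one: for any two minimal primes $\p,\q$ of $R$ there is a chain $\p=\p_0,\p_1,\ldots,\p_n=\q$ of minimal primes such that each consecutive pair $\p_i,\p_{i+1}$ is contained in some prime $\mathfrak{r}_i$ of height one. Since $M$ is free at each $\mathfrak{r}_i$ of some rank $s_i$, further localizing to $\p_i$ and $\p_{i+1}$ forces $\rank_{R_{\p_i}}(M_{\p_i})=s_i=\rank_{R_{\p_{i+1}}}(M_{\p_{i+1}})$. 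Chaining these equalities yields a common value $r$.

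Next, I would construct an injection $\phi\colon R^r\hookrightarrow M$ by choosing $m_1,\ldots,m_r\in M$ whose images in $M_\p/\p M_\p$ form a $k(\p)$-basis for each $\p\in\Min(R)$; such elements exist via a Chinese-remainder/prime-avoidance argument exploiting the pairwise incomparability of the minimal primes. The induced map $\phi$ is then an isomorphism at each $\p\in\Min(R)$: it is surjective by Nakayama, and a surjection between free $R_\p$-modules of the same finite rank is an isomorphism. Since $\Ass(\ker\phi)\subseteq\Ass(R^r)=\Min(R)$ while $(\ker\phi)_\p=0$ at each such $\p$, we obtain $\ker\phi=0$. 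The cokernel $C$ satisfies $C_\p=0$ for every $\p\in\Min(R)$; by equidimensionality, any prime $\q$ with $\dim R/\q=\dim R$ must be a minimal prime, so $\dim C<\dim R$, producing the required sequence.

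The main obstacle is the appeal to connectedness in codimension one for $(S_2)$ local rings, which is the classical nontrivial input; the remainder of the argument is a standard combination of prime avoidance and Nakayama's lemma once the rank is known to be constant on $\Min(R)$.
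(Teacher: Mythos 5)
Your proposal is correct, and its core is the same as the paper's: the constancy of the rank over the minimal (= associated) primes is obtained exactly as in the paper, via connectedness in codimension one of $\Spec(R)$ for $(S_2)$ local rings, i.e.\ the connectedness of the Hochster--Huneke graph cited from \cite{HH94}. The second half of your argument (choosing $m_1,\dots,m_r$ by a CRT/prime-avoidance argument to get $0 \to R^{\oplus r} \to M \to C \to 0$ with $\dim(C)<\dim(R)$) is additional work not in the paper's proof: the paper builds that equivalence into its definition of constant rank and stops once the rank is constant on $\Ass(R)$; your verification of it is correct, though note that the appeal to equidimensionality is unnecessary, since any prime in $\Supp(C)$ strictly contains a minimal prime and hence automatically satisfies $\dim(R/\q)<\dim(R)$.
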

\begin{proof}
Since $R$ satisfies $(S_2)$, $\Ass(R)=\Min(R)$. Let $\p,\q \in \Ass(R)$. We claim $M_{\p}$ and $M_{\q}$ have the same rank as free modules over $R_{\p}$ and $R_{\q}$ respectively. To see this, since $R$ satisfies $(S_2)$ the Hochster-Huneke graph of $R$ is connected (see \cite{HH94}). This means that there is a sequence of associated primes $\p=\p_1,\dots,\p_r=\q$ for which $\hit(\p_i+\p_{i+1}) \le 1$ for all $i$. Then if $\mathfrak{s}$ is a minimal prime of $\p_i+\p_{i+1}$, then $M_{\mathfrak{s}}$ is already a free $R_{\mathfrak{s}}$-module, so localizing further shows that $\rank_{R_{\p_i}}(M_{\p_i})=\rank_{R_{\p_{i+1}}}(M_{\p_{i+1}})$. As this holds for all $i$, we have in particular that $\rank_{R_{\p}}(M_{\p})=\rank_{R_{\q}}(M_{\q})$, giving the claim.  
\end{proof}

\begin{lem}\label{reducetoconstantrank}
Let $R$ be CM with $\dim(R)=d$. If $\rdep$ holds for any pair of modules that are locally free of constant rank on the punctured spectrum of $R$, then $\rdep$ holds for any pair of modules that are locally free on the punctured spectrum of $R$.
\end{lem}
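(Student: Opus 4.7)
The plan is to split the argument according to $d := \dim R$. For $d \ge 2$, every module locally free on the punctured spectrum is in particular locally free in codimension one (since primes of height at most one are strictly below $\m$), so by Lemma \ref{constant} such a module automatically has constant rank, and the hypothesis immediately yields the conclusion. For $d = 0$, the punctured spectrum is empty; moreover, by Example \ref{derivedrdepArtinian}, every Artinian local ring satisfies $\rdep$ for all pairs of modules, so the conclusion is automatic regardless of the hypothesis.

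The substantive case is $d = 1$. Given $M, N$ locally free on the punctured spectrum with $q_R(M,N) = 0$, fix a nonzerodivisor $x \in \m$ and form the pushforwards $L$ of $M$ and $L'$ of $N$ as in Lemma \ref{pushforwardlemma}, fitting into
\[0 \to \Omega^1_R(M) \to L \to M \to 0, \qquad 0 \to \Omega^1_R(N) \to L' \to N \to 0.\]
By that lemma, $\depth L = \depth M$, $\depth L' = \depth N$, and $\NF(L) \subseteq \NF(M) \subseteq \{\m\}$ (with the analogous statement for $L'$), so $L$ and $L'$ remain locally free on the punctured spectrum. Two applications of Lemma \ref{pushforwardlemma}(2) give $q_R(L, L') \le q_R(L, N) \le q_R(M, N) = 0$. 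The key dimension-one phenomenon is that $\Omega^1_R(M)/x\Omega^1_R(M)$ and $\Omega^1_R(N)/x\Omega^1_R(N)$ are of finite length, so the exact sequences $0 \to F \to L \to \Omega^1_R(M)/x\Omega^1_R(M) \to 0$ (and its analogue) show that $L$ and $L'$ have constant rank $\mu_R(M)$ and $\mu_R(N)$ at every minimal prime. The hypothesis applied to the pair $(L, L')$ therefore yields
\[\depth(L \otimes_R L') \le \depth M + \depth N - 1.\]

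To finish, I would transfer this upper bound to $\depth(M \otimes_R N)$ via the short exact sequences
\[0 \to \Omega^1_R(M) \otimes_R L' \to L \otimes_R L' \to M \otimes_R L' \to 0,\]
\[0 \to M \otimes_R \Omega^1_R(N) \to M \otimes_R L' \to M \otimes_R N \to 0,\]
both of which are exact because $q_R(M, L') \le q_R(M, N) = 0$. Since $\Omega^1_R(M)$ and $\Omega^1_R(N)$ are MCM in dimension one, the syzygy tensor products $\Omega^1_R(M) \otimes_R L'$ and $M \otimes_R \Omega^1_R(N)$ have controlled depth, and repeated use of the depth lemma across these sequences should deliver $\depth(M \otimes_R N) \le \depth(L \otimes_R L')$, finishing the proof.

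The main obstacle is this last transfer step, because the depth lemma natively provides only lower bounds whereas we need an upper bound on $\depth(M \otimes_R N)$. I would resolve this by contradiction: assuming $\depth(M \otimes_R N) > \depth M + \depth N - 1$ and performing a case analysis on $(\depth M, \depth N) \in \{0,1\}^2$, the depth lemma forces the depths of all intermediate tensor products $M \otimes_R \Omega^1_R(N)$, $\Omega^1_R(M) \otimes_R L'$, and $M \otimes_R L'$ to be large, contradicting the upper bound $\depth(L \otimes_R L') \le \depth M + \depth N - 1$ coming from the hypothesis.
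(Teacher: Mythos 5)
Your reductions for $d \neq 1$ are exactly right (Lemma \ref{constant} for $d \ge 2$, triviality for $d=0$), and your setup in dimension one is sound as far as it goes: the pushforwards $L$, $L'$ are locally free of constant rank on the punctured spectrum, $\depth L = \depth M$, $\depth L' = \depth N$, and $q_R(L,L')=0$, so the hypothesis does give $\depth_R(L \otimes_R L') \le \depth_R M + \depth_R N - 1$. In particular your framework disposes of the subcase $\depth M = \depth N = 0$ (the bound $\le -1$ is already absurd, which is in effect the paper's argument that $N$ must be MCM) and the subcase where both are MCM is trivial. The problem is the remaining mixed subcase, say $\depth M = 0$, $\depth N = 1$, which is precisely where the real content lies, and there your ``transfer'' step is a genuine gap. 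To run your contradiction you assume $\depth_R(M \otimes_R N) \ge 1$ and need to force $\depth_R(L \otimes_R L') \ge 1$ (or some upper bound on $\depth_R(M\otimes_R N)$) through the sequences $0 \to \Omega^1_R(M)\otimes_R L' \to L\otimes_R L' \to M\otimes_R L' \to 0$ and $0 \to M\otimes_R\Omega^1_R(N) \to M\otimes_R L' \to M\otimes_R N \to 0$. But Proposition \ref{depthlemma} only propagates \emph{lower} bounds, and every route through these sequences requires a lower bound on $\depth_R(\Omega^1_R(M)\otimes_R L')$ or $\depth_R(M\otimes_R\Omega^1_R(N))$. No such bound is available: these are Tor-independent tensor products, and asserting $\depth \ge 1$ for them is exactly an $\ldep$-type depth-formula statement, which is not among your hypotheses and is the kind of inequality this paper shows can fail. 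The hypothesis of the lemma only ever produces \emph{upper} bounds on depths of tensor products, so applying it to further pairs (e.g.\ $(\Omega^1_R(M), L')$) does not help either. As written, the case analysis you sketch cannot close.

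The paper's proof avoids any such transfer. Having shown $N$ must be MCM, it assumes toward a contradiction that $M \otimes_R N$ is MCM and then replaces $N$ by the finite-length module $N/xN$ rather than by its pushforward: Lemma \ref{depthcutdown} gives $\depth_R(M \otimes^L_R N/xN) = 0$, and since $\Tor^R_1(M,N/xN)$ has finite length ($M$ is locally free on the punctured spectrum), Lemma \ref{negativeqr} forces $q_R(M,N/xN)=0$. The pair $(L_M, N/xN)$ is then Tor-independent, locally free of constant rank on the punctured spectrum (the second of rank $0$), and the hypothesis yields $\depth_R(L_M \otimes_R N/xN) \le 0 + 0 - 1$, an outright absurdity. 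If you want to salvage your outline, you should import this move — cutting $N$ down by $x$ and using the derived-category bookkeeping of Lemmas \ref{depthcutdown} and \ref{negativeqr} — in place of the comparison between $M\otimes_R N$ and $L\otimes_R L'$.
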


\begin{proof}
Let $M$ and $N$ be $R$-modules with $q_R(M,N)=0$ that are locally free on the punctured spectrum of $R$. As every Artinian ring trivially satisfies $\rdep$, we may suppose $d \ne 0$. If $d>1$, then it follows from Lemma \ref{constant} that $M$ and $N$ have constant rank already, and there is nothing to prove. So we may suppose that $d=1$. Now, if $M$ and $N$ are both MCM, then $\depth_R(M \otimes_R N) \le \depth_R(M)+\depth_R(N)-d=1$, and we're done, so without loss of generality we may suppose $\depth_R(M)=0$. 

Let $x \in \m$ be a nonzerodivisor on $R$, let $F^M$ be a free cover of $M$, and consider the pushforward diagram:
\[\begin{tikzcd}[cramped]
	0 & {\Omega^1_R(M)} & F^M & M & 0 \\
	0 & {\Omega^1_R(M)} & L_M & M & 0.
	\arrow[from=1-1, to=1-2]
	\arrow[from=1-2, to=1-3]
	\arrow["{\cdot x}"', from=1-2, to=2-2]
	\arrow[from=1-3, to=1-4]
	\arrow[from=1-3, to=2-3]
	\arrow[from=1-4, to=1-5]
	\arrow[equals, from=1-4, to=2-4]
	\arrow[from=2-1, to=2-2]
	\arrow[from=2-2, to=2-3]
	\arrow[from=2-3, to=2-4]
	\arrow[from=2-4, to=2-5]
\end{tikzcd}\]
By Lemma \ref{pushforwardlemma}, $L_M$ has constant rank and we have $\depth_R(L_M)=\depth_R(M)=0$.

We claim that $N$ must be MCM. To see this suppose $\depth_R(N)=0$ and consider the pushforward diagram:
\[\begin{tikzcd}[cramped]
	0 & {\Omega^1_R(N)} & F^N & N & 0 \\
	0 & {\Omega^1_R(N)} & L_N & N & 0
	\arrow[from=1-1, to=1-2]
	\arrow[from=1-2, to=1-3]
	\arrow["{\cdot x}"', from=1-2, to=2-2]
	\arrow[from=1-3, to=1-4]
	\arrow[from=1-3, to=2-3]
	\arrow[from=1-4, to=1-5]
	\arrow[equals, from=1-4, to=2-4]
	\arrow[from=2-1, to=2-2]
	\arrow[from=2-2, to=2-3]
	\arrow[from=2-3, to=2-4]
	\arrow[from=2-4, to=2-5]
\end{tikzcd}\]
so that as above we have $\depth_R(L_N)=\depth_R(N)=0$.

By Lemma \ref{pushforwardlemma}, we have $q_R(L_M,N)=0$ and then that $q(L_M,L_N)=0$. Then we have from hypothesis that $\depth_R(L_M \otimes_R L_N) \le 0+0-1=-1$, which cannot be. So $N$ must be MCM.

Now, if $M \otimes_R N$ is MCM, then as $q_R(M,N)=0$, so is $M \otimes^L_R N$, and then Lemma \ref{depthcutdown} gives that $\depth_R(M \otimes^L_R N/xN)=0$. But then Lemma \ref{negativeqr} forces $q_R(M,N/xN)=0$, and then we have $q_R(L_M,N/xN)=0$ from Lemma \ref{pushforwardlemma} (2). As $L_M$ and $N/xN$ have constant rank, we have from hypothesis that $\depth_R(L_M \otimes_R N/xN) \le \depth_R(L_M)+\depth_R(N/xN)-1=0+0-1$, a contradiction. So $\depth_R(M \otimes_R N)=0 \le \depth_R(M)+\depth_R(N)-d$, completing the proof.
\end{proof}

The next Theorem forms a key part of the main theorem of this section:

\begin{theorem}\label{rdepvsderived}
Let $R$ be CM with $\dim(R)=d$. Suppose $\rdep$ holds for any pair of $R$-modules that are locally free of constant rank on the punctured spectrum of $R$. Then $R$ satisfies derived $\rdep$ for modules. 
\end{theorem}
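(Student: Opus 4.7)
The plan is to combine Lemma \ref{reducetoconstantrank} with iterated pushforwards from Lemma \ref{pushforwardlemma} to reduce the derived $\rdep$ inequality for modules to the hypothesis. First, by Lemma \ref{reducetoconstantrank}, I may strengthen the hypothesis to: $\rdep$ holds for any pair of $R$-modules locally free on the punctured spectrum of $R$. When $d=0$, derived $\rdep$ for modules is automatic by Example \ref{derivedrdepArtinian}, so I may assume $d\ge 1$. Given modules $M,N$ with $q_R(M,N)<\infty$, I would then induct on $\dim\NF(M)+\dim\NF(N)$ to establish $\depth_R(M\otimes^L_R N)\le \depth_R(M)+\depth_R(N)-d$.

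For the base case (both $M$ and $N$ locally free on the punctured spectrum), every positive $\Tor^R_i(M,N)$ has finite length. Setting $q:=q_R(M,N)$, the hypothesis handles $q=0$ directly. For $q\ge 1$, Lemma \ref{negativeqr} applied to $\Tor^R_q(M,N)$ of depth $0$ yields $\depth_R(M\otimes^L_R N)=-q$. Applying the hypothesis to the pair $(\Omega^q_R(M),N)$ -- still locally free on the punctured spectrum and with $q_R=0$ -- while using that the inclusion $\Tor^R_q(M,N)\hookrightarrow \Omega^q_R(M)\otimes_R N$ forces $\depth_R(\Omega^q_R(M)\otimes_R N)=0$, together with $\depth_R(\Omega^q_R(M))=\min\{\depth_R(M)+q,d\}$, a short case split delivers $\depth_R(M)+\depth_R(N)\ge d-q$.

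For the inductive step, assume without loss of generality that $\dim\NF(M)\ge 1$, and choose by prime avoidance a nonzerodivisor $x\in\m$ on $R$ that avoids the minimal primes of $\NF(M)$. The pushforward $L_M$ from Lemma \ref{pushforwardlemma} satisfies $\depth_R(L_M)=\depth_R(M)$, $q_R(L_M,N)\le q_R(M,N)<\infty$, and $\dim\NF(L_M)<\dim\NF(M)$; by the inductive hypothesis, $\depth_R(L_M\otimes^L_R N)\le \depth_R(M)+\depth_R(N)-d$. Applying Proposition \ref{depthlemma} to the exact triangle obtained from $0\to \Omega^1_R(M)\to L_M\to M\to 0$ by derived-tensoring with $N$, and comparing with the parallel triangle coming from $0\to \Omega^1_R(M)\to F_0\to M\to 0$, one transfers the bound to $\depth_R(M\otimes^L_R N)\le \depth_R(L_M\otimes^L_R N)$.

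The principal obstacle is the depth-lemma bookkeeping in this last step. The easy case is when $\depth_R(\Omega^1_R(M)\otimes^L_R N)\ge \depth_R(M\otimes^L_R N)$, where Proposition \ref{depthlemma}(1) applied to the pushforward triangle yields the needed inequality immediately. In the complementary case, Proposition \ref{depthlemma}(3) applied to the free-cover triangle forces $\depth_R(M\otimes^L_R N)>\depth_R(N)$, which combined with $\depth_R(M)\le d$ and the inductive bound contradicts the inductive hypothesis unless $M$ is MCM; the remaining MCM subcase is then resolved by continuing the pushforward construction (each iteration strictly decreasing $\dim\NF$) and tracking the depth relations through the successive exact triangles until the base case applies.
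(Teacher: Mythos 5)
Your base case is correct and close to the paper's (the injection $\Tor^R_q(M,N)\hookrightarrow\Omega^q_R(M)\otimes_R N$ is a legitimate shortcut to $\depth_R(\Omega^q_R(M)\otimes_R N)=0$, and the case split on $\depth_R(\Omega^q_R(M))=\min\{\depth_R(M)+q,d\}$ works). The first part of your inductive step is also sound: in the easy case $\depth_R(\Omega^1_R(M)\otimes^L_R N)\ge\depth_R(M\otimes^L_R N)$, Proposition \ref{depthlemma}(1) applied to $0\to\Omega^1_R(M)\otimes^L_R N\to L_M\otimes^L_R N\to M\otimes^L_R N\to 0$ transfers the inductive bound; and in the complementary case the two triangles force $\depth_R(\Omega^1_R(M)\otimes^L_R N)=\depth_R(L_M\otimes^L_R N)=\depth_R(N)<\depth_R(M\otimes^L_R N)$, which together with the inductive bound on $(L_M,N)$ does force $M$ to be MCM. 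Up to that point your organization (pushforward in all cases, rather than the paper's split by $\depth_R(M)$) is a genuine, workable variant.

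The gap is the remaining MCM subcase: ``continuing the pushforward construction and tracking the depth relations'' is not an argument, and as described it cannot close the case. In the problematic configuration the four relevant depths, say $\depth_R(\Omega^1_R(M)\otimes^L_R N)=\depth_R(L_M\otimes^L_R N)=\depth_R(N)=t$ and $\depth_R(M\otimes^L_R N)=t+1$, satisfy every inequality of Proposition \ref{depthlemma} for both the pushforward triangle and the free-cover triangle, as well as the inductive bound $\depth_R(L_M\otimes^L_R N)\le\depth_R(M)+\depth_R(N)-d$ when $M$ is MCM; so no contradiction is available from this bookkeeping. Iterating the pushforward only produces triangles involving $L_M$, $\Omega^1_R(L_M)$, $L_{L_M}$, etc., i.e., new upper bounds on $\depth_R(L\otimes^L_R N)$ for modules sitting below $M$; none of these ever yields a new upper bound on $\depth_R(M\otimes^L_R N)$, so the deadlock is never broken. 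What is actually needed is a different device, which the paper uses: since $M$ is MCM, a maximal $R$-regular sequence $\underline{x}$ is $M$-regular, $M/\underline{x}M$ is locally free on the punctured spectrum, so the already-established bound applies to the pair $(M/\underline{x}M,N)$, giving $\depth_R(M/\underline{x}M\otimes^L_R N)\le\depth_R(N)-d$; one then transfers back via $M/\underline{x}M\otimes^L_R N\simeq (M\otimes^L_R N)\otimes_R K(\underline{x},R)$ and Lemma \ref{depthcutdown}, which drops depth by exactly one at each step, to get $\depth_R(M\otimes^L_R N)\le\depth_R(N)=\depth_R(M)+\depth_R(N)-d$. Your proposal contains no substitute for this regular-sequence/Koszul reduction, so the MCM subcase is a genuine gap.
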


\begin{proof}
Suppose $M$ and $N$ are $R$-modules with $q:=q_R(M,N)<\infty$. We proceed by induction on $\dim(\NF(M))+\dim(\NF(N))$. For the base case, if $\dim(\NF(M))+\dim(\NF(N))=0$, then both $M$ and $N$ are locally free on the punctured spectrum of $R$. 

If $q=0$, then claim follows from Lemma \ref{reducetoconstantrank}, so we may suppose $q>0$. Then $\Tor^R_q(M,N))=0$ has finite length, so Lemma \ref{negativeqr} gives that $\depth_R(M \otimes^L_R N)=-q$. Letting $F_{\bullet}$ be a minimal free resolution of $M$, we may apply Proposition \ref{depthlemma} to the exact sequences
\[0 \to \Omega^i_R(M) \otimes^L_R N \to F^M_i \otimes^L_R N \to \Omega^{i-1}_R(M) \otimes^L_R N \to 0,\]
to see that $\depth_R(\Omega^q_R(M) \otimes^L_R N)=0$. As $q_R(\Omega^q_R(M),N)=0$, it follows from Lemma \ref{reducetoconstantrank} that 
\[0=\depth_R(\Omega^q_R(M) \otimes_R N) \le \depth_R(\Omega^q_R(M))+\depth_R(N)-d \le \depth_R(M)+q+\depth_R(N)-d.\]
So
\[\depth_R(M \otimes^L_R N)=-q \le \depth_R(M)+\depth_R(N)-d,\]
and the base case is established. 

Now suppose $\dim(\NF(M))+\dim(\NF(N))>0$, in particular without loss of generality that $\dim(\NF(M))>0$. We consider several cases:

(i) Suppose $M$ is MCM and let $\underline{x}$ be a maximal regular sequence. Then as $M/\underline{x}M$ is locally free on the punctured spectrum, we have from the base case that $\depth_R(M/\underline{x}M \otimes^L_R N) \le \depth_R(N)-d$. But then from Lemma \ref{depthcutdown}, we have
\[\depth_R(M \otimes^L_R N)=\depth_R(M/\underline{x}M \otimes^L_R N)+d \le \depth_R(N)=\depth_R(M)+\depth_R(N)-d.\]
So the claim is established when $M$ is MCM.

(ii) Suppose $0<\depth_R(M)<d$. By prime avoidance, we may choose $x \in \m$ to be a nonzerodivisor on $M$ that lies outside all the minimal primes of $\NF(M)$. Then from Lemma \ref{pushforwardlemma}, we have $\dim(\NF(\Omega^1_R(M/xM)))<\dim(\NF(M))$, and we may apply inductive hypothesis to obtain
\[\depth_R(\Omega^1_R(M/xM) \otimes^L_R N) \le \depth_R(\Omega^1_R(M/xM))+\depth_R(N)-d=\depth_R(M)+\depth_R(N)-d.\]
In particular, $\depth_R(\Omega^1_R(M/xM) \otimes^L_R N)<\depth_R(N)$ since $\depth_R(M)<d$. Let $F^{M/xM}$ be a free cover of $M/xM$ over $R$ and note that $\depth_R(F^{M/xM} \otimes^L_R N)=\depth_R(F^{M/xM} \otimes_R N)=\depth_R(N)$. Then applying Proposition \ref{depthlemma} to the exact sequence 
\[0 \to \Omega^1_R(M/xM) \otimes^L_R N \to F^{M/xM} \otimes^L_R N \to M/xM \otimes^L_R N \to 0\]
we get $\depth_R(M/xM \otimes^L_R N)=\depth_R(\Omega^1_R(M/xM) \otimes^L_R N)-1 \le \depth_R(M)+\depth_R(N)-d-1$, so $\depth_R(M \otimes^L_R N) \le \depth_R(M)+\depth_R(N)-d$ by Lemma \ref{depthcutdown}, and we have the claim when $0<\depth_R(M)<d$. 

(iii) Suppose $\depth_R(M)=0$ and that $\depth(\Omega^1_R(M) \otimes^L_R N)<\depth_R(N)$. Since $\depth_R(\Omega^1_R(M))>0$, we may appeal to case (ii) to get that $\depth(\Omega^1_R(M) \otimes^L_R N) \le \depth_R(\Omega^1_R(M))+\depth_R(N)-d=\depth_R(N)-d+1$. Since $\depth_R(\Omega^1_R(M) \otimes^L_R N)<\depth_R(N)$, we may apply Proposition \ref{depthlemma} to the exact sequence
\[0 \to \Omega^1_R(M) \otimes^L_R N \to F^M_0 \otimes^L_R N \to M \otimes^L_R N \to 0\]
to see that 
\[\depth_R(M \otimes^L_R N)=\depth_R(\Omega^1_R(M) \otimes^L_R N)-1 \le \depth_R(N)-d=\depth_R(M)+\depth_R(N)-d.\]

(iv) Finally, suppose $\depth_R(M)=0$ and that $\depth_R(\Omega^1_R(M) \otimes^L_R N) \ge \depth_R(N)$. Let $x$ be a nonzerodivisor on $R$ that is not contained in any minimal prime of $\NF(M)$, and consider the pushforward diagram
\[\begin{tikzcd}[cramped]
	0 & {\Omega^1_R(M)} & F & M & 0 \\
	0 & {\Omega^1_R(M)} & L & M & 0.
	\arrow[from=1-1, to=1-2]
	\arrow[from=1-2, to=1-3]
	\arrow["{\cdot x}"', from=1-2, to=2-2]
	\arrow[from=1-3, to=1-4]
	\arrow[from=1-3, to=2-3]
	\arrow[from=1-4, to=1-5]
	\arrow[equals, from=1-4, to=2-4]
	\arrow[from=2-1, to=2-2]
	\arrow[from=2-2, to=2-3]
	\arrow[from=2-3, to=2-4]
	\arrow[from=2-4, to=2-5]
\end{tikzcd}\]
By Lemma \ref{pushforwardlemma}, we have $\depth_R(L)=0$ and that $\dim(\NF(L))<\dim(\NF(M))$, so we may apply inductive hypothesis to obtain that $\depth_R(L \otimes^L_R N) \le \depth_R(N)-d$. Note that $d>0$ since $\dim(\NF(M))>0$, so in particular $\depth_R(L \otimes^L_R N)<\depth_R(N)$. Then we may apply Proposition \ref{depthlemma} to the exact sequence
\[0 \to \Omega^1_R(M) \otimes^L_R N \to L \otimes^L_R N \to M \otimes^L_R N \to 0\]
to see that 
\[\depth_R(M \otimes^L_R N) \le \depth_R(L \otimes^L_R N) \le \depth_R(N)-d=\depth_R(M)+\depth_R(N)-d,\] completing the proof.
\end{proof}

We are now ready to prove the main theorem of this section:
\begin{theorem}\label{rdepthm}
Let $R$ be a CM ring of dimension $d$. Consider the following conditions:
\begin{enumerate}
\item[$(1)$] $R$ satisfies derived $\rdep$.
\item[$(2)$] $R$ satisfies derived $\rdep$ for modules.
\item[$(3)$] $R$ satisfies $\rdep$.
\item[$(4)$] If $M$ and $N$ are $R$-modules with $q_R(M,N)=0$ that are locally free of constant rank on the punctured spectrum of $R$, then $\codepth_R(M \otimes_R N) \ge \codepth_R(M)+\codepth_R(N)$.
\item[$(5)$] If $M$ and $N$ are $R$-modules with $q_R(M,N)<\infty$, and if $M \otimes_R^L N$ is MCM, then $M$ is MCM.
\item[$(6)$] If $M$ and $N$ are $R$-modules with $N$ MCM, with $q_R(M,N)=0$, and with $M \otimes_R N$ MCM, then $M$ is MCM.
\item[$(7)$] $R$ satisfies $\ubc$.
\item[$(8)$] If $M$ and $N$ are $R$-modules with $N$ MCM and $b_R(M,N)=0$, then $M$ is MCM.
\end{enumerate}

Then $(1) \Rightarrow (2) \iff (3) \iff (4) \Rightarrow (5) \Rightarrow (6)$, and $(3) \Rightarrow (7) \Rightarrow (8)$. If $d>0$, then $(2) \Rightarrow (1)$, and if $R$ has a canonical module $\w_R$, then $(6) \iff (8)$.
\end{theorem}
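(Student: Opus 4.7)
The plan is to establish the full collection of implications as a diagram chase through the eight conditions, drawing primarily on the preliminary results Theorems \ref{rdepvsderived} and \ref{derivedldepvsformodules} and Proposition \ref{exttorall}, together with elementary manipulations of depth, bounded homology, and local duality. I organize the proof into three parts: the cycle $(2) \iff (3) \iff (4)$ with its link to $(1)$; the chain $(4) \Rightarrow (5) \Rightarrow (6)$; and the chain $(3) \Rightarrow (7) \Rightarrow (8)$ together with the equivalence $(6) \iff (8)$ when $R$ has a canonical module.

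The direct implications are mostly bookkeeping. The restriction $(1) \Rightarrow (2)$ is Remark \ref{derivedimplies}, while $(2) \Rightarrow (3)$ follows because $q_R(M,N) = 0$ forces $M \otimes^L_R N \simeq M \otimes_R N$; then $(3) \Rightarrow (4)$ is a further restriction. The return arrow $(4) \Rightarrow (2)$ is exactly Theorem \ref{rdepvsderived}, and $(2) \Rightarrow (1)$ under $d > 0$ is Theorem \ref{derivedldepvsformodules}. For $(4) \Rightarrow (5)$, one uses that $(4) \Rightarrow (2)$ gives derived $\rdep$ for modules; if $M \otimes^L_R N$ is MCM then Proposition \ref{mcmcomplex} (noting that the derived tensor of two modules has no homology in negative degrees) gives $\depth_R(M \otimes^L_R N) \ge d$, and combining with derived $\rdep$ forces $\depth_R(M) = \depth_R(N) = d$. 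The implication $(5) \Rightarrow (6)$ is immediate since $q_R(M,N) = 0$ again gives $M \otimes^L_R N \simeq M \otimes_R N$. The direct $(7) \Rightarrow (8)$ follows from applying $\ubc$ with $b_R(M,N) = 0$, which forces $\codepth_R(M) \le 0$. Finally, $(6) \iff (8)$ under existence of $\w_R$ is a direct application of Proposition \ref{exttorall}, replacing $N$ by $N^\vee$ to translate between the $\Tor$ and $\Ext$ formulations.

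The main obstacle is $(3) \Rightarrow (7)$. The plan is first to reduce to the complete case: $(3)$ implies derived $\rdep$ via Theorem \ref{derivedldepvsformodules} when $d > 0$ (the Artinian case being trivial for both $(3)$ and $(7)$), derived $\rdep$ is preserved under completion by Corollary \ref{rdepcompletes}, and $\ubc$ descends under faithful flatness. So I may assume $R$ is complete with canonical module $\w_R$ and that $R$ satisfies derived $\rdep$. Given $M,N$ with $b_R(M,N) < \infty$, set $X := M \otimes^L_R \RHom_R(N, \w_R)$. Since $\w_R$ is a dualizing complex and $N$ is a module, biduality together with tensor--Hom adjointness gives $\RHom_R(M,N) \simeq \RHom_R(X, \w_R)$, so $b_R(M,N) < \infty$ forces $X$ to have bounded homology and derived $\rdep$ applies. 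Local duality \cite[3.4.1]{IM21} identifies $\sup\{i : H^i(\RHom_R(-, \w_R)) \ne 0\}$ with $d - \depth_R(-)$, yielding $b_R(M,N) = d - \depth_R(X)$ and $\depth_R(\RHom_R(N, \w_R)) = d$ (since $N$ is concentrated in degree zero). Applying derived $\rdep$ to $X$ then gives $\depth_R(X) \le \depth_R(M) + d - d = \depth_R(M)$, so $b_R(M,N) \ge \codepth_R(M)$ as required.
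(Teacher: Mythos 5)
Your proposal is correct and follows essentially the same route as the paper: the cycle $(2)\iff(3)\iff(4)$ via Theorem \ref{rdepvsderived} and Theorem \ref{derivedldepvsformodules}, the reduction of $(3)\Rightarrow(7)$ to the complete case followed by the local-duality computation with $X=M\otimes^L_R\RHom_R(N,\w_R)$, and Proposition \ref{exttorall} for $(6)\iff(8)$. The only difference is that you spell out $(4)\Rightarrow(5)\Rightarrow(6)$, which the paper dismisses as immediate, and your added details there are accurate.
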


\begin{proof}

$(1) \Rightarrow (2) \Rightarrow (3) \Rightarrow (4)$ are immediate while $(4) \Rightarrow (2)$ is the content of Theorem \ref{rdepvsderived}, so $(2)-(4)$ are equivalent. That $(2) \Rightarrow (1)$ when $d>0$ is the content of Theorem \ref{derivedldepvsformodules}.

It is then immediate that $(4) \Rightarrow (5) \Rightarrow (6)$, so we focus our attention first on $(3) \Rightarrow (7)$.

The claim is automatic if $d=0$, so we may suppose $d>0$, and since we have seen that $(3) \Rightarrow (1)$ in this setting, it suffices to show $(1) \Rightarrow (7)$. Moreover, we have from Corollary \ref{rdepcompletes} that $(1)$ ascends to the completion, while $\ubc$ clearly descends from the completion, so we may suppose $R$ is compete and in particular that $R$ admits a canonical module $\w_R$. Suppose $M$ and $N$ are $R$-modules with $b_R(M,N)<\infty$. Then we have 
\[\RHom_R(M,N) \cong \RHom_R(M,\RHom_R(\RHom_R(N,\w_R),\w_R)) \cong \RHom_R(M \otimes^L_R \RHom_R(N,\w_R),\w_R),\]
and in particular, we see that $\RHom_R(N,\w_R)$ and $M \otimes^L_R \RHom_R(N,\w_R)$ have bounded homology. Since we assume $R$ satisfies derived $\rdep$, we have that $\depth_R(M \otimes^L_R \RHom_R(N,\w_R)) \le \depth_R(M)+\depth_R(\RHom_R(N,\w_R))-d$.

But by local duality (\cite[3.4.1]{IM21}), we have $\depth_R(\RHom_R(N,\w_R))=d-\sup\{i \mid H^i(N) \ne 0\}=d$, so $\depth_R(M \otimes^L_R \RHom_R(N,\w_R)) \le \depth_R(M)$. But then applying local duality again, we see that 
$b_R(M,N)=\sup\{H^i(\RHom_R(M,N)) \ne 0\}=d-\depth_R(M \otimes^L_R \RHom_R(N,\w_R)) \ge \codepth_R(M)$, so $R$ satisfies $\ubc$.

For $(7) \Rightarrow (8)$, if $b_R(M,N)=0$ and $N$ is MCM, the $\ubc$ condition forces $\codepth_R(M)=0$, so $M$ is MCM. 

We now show $(6) \Rightarrow (8)$ assuming $R$ admits a canonical module. Suppose $M$ and $N$ are $R$-modules with $N$ MCM and with $b_R(M,N)=0$. Then by Proposition \ref{exttorall} we have that $M \otimes_R N^{\vee}$ is MCM and that $q_R(M,N^{\vee})=0$, so the condition of $(6)$ forces $M$ to be MCM.

Finally, we show $(8) \Rightarrow (6)$ assuming $R$ admits a canonical module. Suppose $M$ and $N$ are $R$-modules with $M \otimes_R N$ and $N$ MCM and with $q_R(M,N)=0$. Then by Proposition \ref{exttorall}, we have that $b_R(M,N^{\vee})=0$, and then $M$ is MCM by the condition of $(8)$.   
\end{proof}

Combining Theorems \ref{ldepthm} and \ref{rdepthm} yields the following immediate consequence:
\begin{cor}\label{depcor}
Suppose $R$ is CM with $\dim(R)>0$. If $R$ satisfies $\dep$, then for any $R$-modules $M$ and $N$ with $b_R(M,N)<\infty$, we have $b_R(M,N)=\codepth_R(M)$. The converse holds if $R$ is Gorenstein. 
\end{cor}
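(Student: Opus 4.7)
The plan is to deduce both directions directly from Theorems \ref{ldepthm} and \ref{rdepthm} together with Corollary \ref{gorcase}; essentially no new computation is needed, only a careful bookkeeping of which equivalent conditions each hypothesis supplies.

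For the forward direction, assume $R$ satisfies $\dep$, so in particular both $\ldep$ and $\rdep$ hold. Let $M,N$ be $R$-modules with $b_R(M,N)<\infty$. Since $R$ is CM with $\dim(R)>0$, part (III) of Theorem \ref{ldepthm} tells us that $\ldep$ is equivalent to condition (5) of that theorem, which gives the inequality
\[
b_R(M,N)\le \codepth_R(M).
\]
On the other hand, Theorem \ref{rdepthm} gives the implication $(3)\Rightarrow(7)$, so $\rdep$ forces $R$ to satisfy $\ubc$; by Definition \ref{ubcdef} this yields
\[
b_R(M,N)\ge \codepth_R(M).
\]
Combining these two inequalities gives the desired equality $b_R(M,N)=\codepth_R(M)$.

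For the converse, assume $R$ is Gorenstein of positive dimension and that $b_R(M,N)=\codepth_R(M)$ whenever $b_R(M,N)<\infty$. In particular $b_R(M,N)\le\codepth_R(M)$ for all such pairs, which is exactly condition (5) of Theorem \ref{ldepthm}. By the equivalence of (1)--(6) in that theorem (part (I)), this condition is equivalent to $R$ satisfying $\ldep$ (or its derived variants). Since $R$ is Gorenstein with $\dim(R)>0$, Corollary \ref{gorcase} then promotes $\ldep$ to $\rdep$, and consequently $R$ satisfies $\dep$.

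There is no genuine obstacle here; the only point to be careful about is that in the converse we only need the $b_R(M,N)\le\codepth_R(M)$ half of the hypothesized equality, together with the Gorenstein assumption to invoke Corollary \ref{gorcase}. The hypothesis that $R$ is Gorenstein is used precisely to cross from $\ldep$ to $\rdep$, since in general $\ldep$ does not imply $\rdep$; without this hypothesis one would only recover $\ldep$ (plus $\ubc$-like information implicitly carried by the other inequality) rather than full $\dep$.
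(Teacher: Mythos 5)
Your proposal is correct and follows essentially the same route as the paper: the forward direction combines condition $(5)$ of Theorem \ref{ldepthm} (available from $\ldep$ since $\dim(R)>0$) with the $\ubc$ conclusion $(3)\Rightarrow(7)$ of Theorem \ref{rdepthm}, and the converse passes from the inequality $b_R(M,N)\le\codepth_R(M)$ to condition $(5)$, hence to $\ldep$, and then uses Corollary \ref{gorcase} under the Gorenstein hypothesis to obtain $\rdep$ and thus $\dep$. No gaps; your remark that only the ``$\le$'' half of the equality is needed in the converse matches the paper's argument.
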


\begin{proof}
If $R$ is CM with $\dim(R)>0$ and $R$ satisfies $\dep$, then $R$ satisfies condition $(5)$ of Theorem \ref{ldepthm} and satisfies $\ubc$ from Theorem \ref{rdepthm} which combine to give the first claim. For the converse when $R$ is Gorenstein, then condition that $b_R(M,N)=\codepth_R(M)$ when $b_R(M,N)<\infty$ in particular implies condition $(5)$ of Theorem \ref{ldepthm} which thus implies that $R$ satisfies derived $\ldep$. That $R$ satisfies $\dep$ then follows from Corollary \ref{gorcase}.
\end{proof}

\begin{remark}\label{significance}
Theorem \ref{rdepthm} combined with Proposition \ref{cutsdown} gives the precise means for one to check the $\rdep$ condition after modding out a maximal regular sequence $\underline{x}$ in a CM local ring $R$ of positive dimension; namely $R$ will satisfy $\rdep$ if and only if $R/\underline{x}$ satisfies derived $\rdep$. This gives a counterpoint to what happens for $\ldep$, where one may check whether $R/\underline{x}$ satisfies derived $\ldep$ for modules or whether it satisfies $\uac$. In fact of the $8$ conditions discussed in Theorem \ref{rdepthm}, $(1)$ is the only one that does not hold for every Artinian algebra.
\end{remark}

\begin{remark}\label{obstructions}
As one may expect, the $\uac$ and $\ubc$ admit analogous versions for complexes. For an $R$-complex $X$, setting $\sup X:=\sup\{i \mid H^i(X) \ne 0\}$, the definitions are as follows: 
\begin{enumerate}
    \item[$(1)$] $R$ satisfies derived $\uac$ with Auslander bound $b_R$ if for all $R$-complexes $M$,$N$ for which $M$, $N$, and $\RHom_R(M,N)$ have bounded homology, we have $\sup \RHom_R(M,N) \le b_R+\sup N$.
    \item[$(2)$] $R$ satisfies derived $\ubc$ if for all $R$-complexes $M$, $N$ for which $M$, $N$ and $\RHom_R(M,N)$ have bounded homology, we have $\sup \RHom_R(M,N) \ge \codepth_R(M)+\sup N$.
\end{enumerate}
Following the approach of Corollaries \ref{ldepcompletes} and \ref{rdepcompletes}, one can see that derived $\uac$ and derive $\ubc$ ascend and descend to and from the completion, and then a straightforward application of local duality \cite[3.4.1]{IM21} shows that derived $\ldep$ is equivalent to derived $\uac$ with $b_R=\depth(R)$, and that derived $\rdep$ is equivalent derived $\ubc$. It then follows from Theorem \ref{ldepthm} that derived $\uac$ is equivalent to $\uac$. However, we do not know whether $\ubc$ implies derived $\ubc$ when $\depth(R)>0$, or even for CM rings of positive dimension. The chief obstruction in the latter case is that we don't know whether $\rdep$ can be checked on MCM modules in the same vein as $\ldep$. If this can be shown, then it will follow that derived $\ubc$ and $\ubc$ are equivalent when $R$ is CM of positive dimension, and the Gorenstein assumption for the converse in Corollary \ref{depcor} can be removed.
\end{remark}

While we can construct examples that fail $\rdep$, the only means we know of doing so works through the following which extends \cite[Proposition 3.5]{KL23}:
\begin{cor}\label{rdepimpliestr}
Suppose $R$ is CM with $\dim(R)>0$. If $R$ satisfies $\rdep$ or if $R$ satisfies $\ubc$, then $R$ satisfies $\tr$.
\end{cor}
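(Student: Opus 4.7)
The approach is to reduce the claim to showing that $\ubc$ alone implies $\tr$: by the implication $(3)\Rightarrow(7)$ in Theorem \ref{rdepthm}, $\rdep$ implies $\ubc$, so it is enough to handle that case. Both $\ubc$ and $\tr$ transfer in both directions between $R$ and its $\m$-adic completion (for $\ubc$, because faithful flatness of completion preserves $b_R(M,N)$ and $\codepth_R(M)$ for finitely generated modules; for $\tr$, by the standard ascent and descent of total reflexivity under faithfully flat base change), so we may assume $R$ is complete and in particular admits a canonical module $\w_R$.

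Let $M$ be a finitely generated $R$-module with $b_R(M,R)=0$. Applying $\ubc$ with $N=R$ (which is MCM since $R$ is CM) forces $\codepth_R(M)=0$, so $M$ is MCM. Dualizing the minimal free resolution of $M$ yields the exact sequence
\[
0\to M^*\to F_0^*\to F_1^*\to F_2^*\to\cdots
\]
with each $F_i^*$ MCM. Setting $K_j:=\ker(F_j^*\to F_{j+1}^*)$ with $K_0=M^*$, iterated application of the depth lemma to the short exact sequences $0\to K_j\to F_j^*\to K_{j+1}\to 0$, propagated downward from the trivial bound $\depth_R(K_j)\ge 0$, yields $\depth_R(K_j)\ge d-j$; in particular $M^*$ is MCM.

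The remaining task is to show $b_R(M^*,R)=0$ and that the biduality map $\sigma_M\colon M\to M^{**}$ is an isomorphism. The cleanest reformulation uses the Auslander transpose $\Tr M$: Auslander's fundamental exact sequence
\[
0\to\Ext^1(\Tr M,R)\to M\xrightarrow{\sigma_M} M^{**}\to\Ext^2(\Tr M,R)\to 0,
\]
combined with the identifications $\Ext^{i}(\Tr M,R)\cong\Ext^{i-2}(M^*,R)$ for $i\ge 3$ obtained by splicing the exact sequence $0\to M^*\to F_0^*\to F_1^*\to\Tr M\to 0$ and dimension-shifting, shows that under the hypothesis $b_R(M,R)=0$, $M$ is totally reflexive if and only if $b_R(\Tr M,R)=0$. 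The problem therefore reduces to showing $b_R(\Tr M,R)=0$; by Proposition \ref{exttorall}, this is equivalent to proving that $\Tr M\otimes_R\w_R$ is MCM and $q_R(\Tr M,\w_R)=0$. The main obstacle is propagating these two properties from their counterparts for $M$ (namely, ``$M\otimes_R\w_R$ is MCM'' and ``$q_R(M,\w_R)=0$'', both furnished by Proposition \ref{exttorall} from $b_R(M,R)=0$). The expected argument deploys $\ubc$ --- equivalently, condition (8) of Theorem \ref{rdepthm} --- applied to auxiliary modules constructed from $M$, $M^*$, $\Tr M$, and the cosyzygies $K_j$, exploiting the canonical duality on MCM modules afforded by $\w_R$ to transfer the needed vanishing from $M$ to $\Tr M$.
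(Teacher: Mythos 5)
There is a genuine gap. Your argument correctly gets as far as the paper does at the halfway point: applying condition $(8)$ of Theorem \ref{rdepthm} with $N=R$ shows that $b_R(M,R)=0$ forces $M$ to be MCM, and your reduction of total reflexivity to the single statement $b_R(\Tr_R(M),R)=0$ (via Auslander's exact sequence and dimension shifting along $0\to M^*\to F_0^*\to F_1^*\to\Tr_R(M)\to 0$) is correct. But the crucial final step is never carried out: the last paragraph only describes an ``expected argument'' applying $\ubc$ to unspecified auxiliary modules, and no such argument is given. This step cannot be waved away, because ``$b_R(M,R)=0$ implies $M$ MCM'' is \emph{not} by itself enough to yield $\tr$: over an Artinian ring every module is MCM, yet the Jorgensen--\c{S}ega examples (used in Example \ref{trcountex}) fail $\tr$. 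So any completion of your sketch must genuinely use $\dim(R)>0$, i.e.\ the fact that the modules in question have \emph{positive} depth, and your outline nowhere does so. The paper's proof is short precisely because it delegates this hard step to an external result: once one knows that every $M$ with $\Ext^{i>0}_R(M,R)=0$ has positive depth, \cite[Theorem 3.4]{KL23} yields $\tr$ directly; no completion, canonical module, or transpose computation is needed.

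A secondary issue: your passage to the completion is not justified as stated. Descent of $\tr$ from $\hat{R}$ is fine, but to ascend $\ubc$ to $\hat{R}$ you must handle arbitrary finitely generated $\hat{R}$-modules, which need not be extended from $R$-modules, so ``faithful flatness preserves $b_R(M,N)$ and $\codepth_R(M)$'' does not suffice (the paper is careful about exactly this kind of ascent elsewhere, e.g.\ for condition $(5)$ of Theorem \ref{ldepthm} it cites \cite[Remark 5.7]{CH10}). Since the paper's route avoids completing at all, the cleanest fix is to abandon the transpose strategy and instead combine the MCM conclusion (hence $\depth_R(M)=\depth(R)>0$) with \cite[Theorem 3.4]{KL23}.
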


\begin{proof}
If $R$ satisfies either $\rdep$ or $\ubc$, then by Theorem \ref{rdepthm}, if $\Ext^{i>0}_R(M,R)=0$, it must be that $M$ is MCM. In particular, $\depth_R(M)=\depth(R)>0$, and then $R$ satisfies $\tr$ by \cite[Theorem 3.4]{KL23}. 
\end{proof}

\begin{example}\label{trcountex}
Let $k$ be a field not algebraic over a finite field and let $\alpha \in k$ be an element of infinite multiplicative order. Let $R=k[\![v,x,y,z]\!]/I$ where $I=(v^2,z^2,xy,vx+\alpha xz,vy+yz,vx+y^2,vy-x^2)$ and let $A:=R[\![t]\!]$. Then 
\begin{enumerate}
\item[$(1)$] $R$ satisfies derived $\rdep$, but does not satisfy derived $\ldep$. 
\item[$(2)$] $A$ does not satisfy $\rdep$ or $\ldep$.
\item[$(3)$] There is a local domain $B$ of dimension $1$ with $\hat{B} \cong A$ and $B$ does not satisfy $\rdep$ or $\ldep$. 
\end{enumerate}
\end{example}

\begin{proof}
As $R$ is Artinian, it satisfies derived $\rdep$ from Example \ref{derivedrdepArtinian}. It follows from \cite{JS06} that $R$ does not satisfy $\tr$, and then neither does $A$ by \cite[Theorem 2.11]{KL23}. In particular, neither $R$ nor $A$ satisfies derived $\ldep$ by Corollary \ref{ldepimpliestr}. Then by Theorem \ref{ldepthm}, $A$ does not satisfy $\ldep$. By \cite[Theorem 1]{Le86}, there is a local domain $B$ for which $\hat{B} \cong A$. Since $A$ does not satisfy $\tr$, neither does $B$ by \cite[Theorem 2.11]{KL23}, and then as above, $B$ satisfies neither $\rdep$ nor $\ldep$.
\end{proof}

\begin{remark}
The construction of Example \ref{trcountex} that provides examples failing the $\tr$ condition is the only method we know for producing examples that fail to satisfy $\rdep$ and it has some limitations e.g.~since these examples have no hope of satisfying $\ldep$. 
\end{remark}

The next example is inspired by a construction of \cite{NSW19} and shows that none of the conditions $\ldep$, $\rdep$, $\ubc$, and $\tr$ localize in general:
\begin{example}\label{notlocalize}
Let $R$, $I$, and $A$ be as in Example \ref{trcountex}. Let $B:=k[\![t,v,x,y,z,s]\!]/(I,ts,vs,xs,ys,zs) \cong A \times_k k[\![s]\!]$ where $\times_k$ denotes the fiber product, take $Q:=B[\![w]\!]$, and let $\p:=(w,v,x,y,z,s)$. Then $Q$ is a complete Cohen-Macaulay local ring of dimension $2$ by \cite[Facts 2.1 and 2.2]{NSW19}. It follows from \cite[Corollary 6.8]{NT20} and \cite[Theorem 3.2 (2)]{LM20} that $Q$ satisfies the trivial vanishing condition of \cite{LM20} and thus satisfies $\dep$, $\ubc$, and $\tr$ by Example \ref{tvexample}. However, we observe that $Q_{\p} \cong k(t)[\![w,v,x,y,z]\!]/(v^2,z^2,xy,vx+\alpha xz,vy+yz,vx+y^2,vy-x^2)$ which does not satisfy $\tr$ by \cite[Proposition 2.6 (2)]{KL23} and \cite{JS06}, and then also fails to satisfy both $\ldep$, $\rdep$, and $\ubc$ by Corollaries \ref{ldepimpliestr} and \ref{rdepimpliestr}. 
\end{example}

\section{Calculation of $q_R(M,N)$}\label{qrsection}
As the work of the previous sections indicates, the value $q_R(M,N)$, when it is finite, is intimately connected to the depth behavior of $M \otimes_R N$ is relation to that of $M$ and $N$. In this section, we seek to make this connection more precise, ultimately extending work of Jorgensen \cite{Jo99}. Unlike previous sections, we will not require $R$ to be CM in this section. As noted in Example \ref{notlocalize}, the $\ldep$ and $\rdep$ conditions need not localize, so assuming these conditions hold locally on $\Spec(R)$ will give meaningful hypotheses.

\begin{lem}\label{lemmaoneforjorgensensformula} Let $R$ be a Noetherian local ring and let $M$, $N$ be finitely generated modules. Suppose derived $\rdep$ holds on $\Spec{R}$ for the module $M$. If $q_{R}(M,N)<\infty$, then
\[q_{R}(M,N)\geq\sup\{\depth(R_{\p})-\depth_{R_{\p}}(M_{\p})-\depth_{R_{\p}}(N_{\p}) \mid \p\in\Supp{M}\cap\Supp{N}\}.\]
\end{lem}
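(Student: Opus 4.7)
The plan is to reduce the inequality to a local statement at each prime $\p \in \Supp(M) \cap \Supp(N)$ and then combine Lemma \ref{negativeqr} with the hypothesis that derived $\rdep$ holds at $R_\p$. Concretely, fix such a prime $\p$. The first observation is that localization of $\Tor$ gives $\Tor^{R_\p}_i(M_\p,N_\p) \cong \Tor^R_i(M,N)_\p$, so $q_{R_\p}(M_\p,N_\p) \le q_R(M,N) < \infty$. In particular, the complex $M_\p \otimes^L_{R_\p} N_\p$ has bounded homology with top degree $t := q_{R_\p}(M_\p,N_\p) < \infty$, so Lemma \ref{negativeqr} yields
\[\depth_{R_\p}(M_\p \otimes^L_{R_\p} N_\p) \ge -q_{R_\p}(M_\p,N_\p).\]

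Next, since derived $\rdep$ holds at $R_\p$ by hypothesis and since $M_\p$, $N_\p$, and $M_\p \otimes^L_{R_\p} N_\p$ all have bounded homology, the $\rdep$ inequality gives
\[\depth_{R_\p}(M_\p \otimes^L_{R_\p} N_\p) \le \depth_{R_\p}(M_\p) + \depth_{R_\p}(N_\p) - \depth(R_\p).\]
Stringing these two inequalities together and multiplying by $-1$ produces
\[q_R(M,N) \ge q_{R_\p}(M_\p,N_\p) \ge \depth(R_\p) - \depth_{R_\p}(M_\p) - \depth_{R_\p}(N_\p).\]
Taking the supremum over $\p \in \Supp(M) \cap \Supp(N)$ yields the claim.

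The argument is essentially a packaging exercise: the hypothesis $\p \in \Supp(M) \cap \Supp(N)$ is exactly what ensures $M_\p$ and $N_\p$ are nonzero so the depths on the right-hand side are finite, and the assumption that $q_R(M,N) < \infty$ is precisely what makes Lemma \ref{negativeqr} applicable after localizing. The only subtlety worth double-checking is that the derived $\rdep$ hypothesis at $R_\p$ applies to the complexes $M_\p$ and $N_\p$ (viewed as modules concentrated in degree zero), which requires $M_\p \otimes^L_{R_\p} N_\p$ to have bounded homology — and this is guaranteed by $q_{R_\p}(M_\p,N_\p) \le q_R(M,N) < \infty$. Thus there is no significant obstacle; the proof is a direct application of Lemma \ref{negativeqr} together with the localized derived $\rdep$ assumption.
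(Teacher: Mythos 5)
Your proof is correct and follows exactly the paper's argument: localize $\Tor$ to get $q_{R_\p}(M_\p,N_\p)\le q_R(M,N)<\infty$, apply Lemma \ref{negativeqr} to bound $\depth_{R_\p}(M_\p\otimes^L_{R_\p}N_\p)$ from below by $-q_{R_\p}(M_\p,N_\p)$, and combine with the derived $\rdep$ inequality at $R_\p$ before taking the supremum over $\p\in\Supp(M)\cap\Supp(N)$. No differences worth noting.
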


\begin{proof}
Note since $q_R(M,N)<\infty$ that $q_{R_{\p}}(M_{\p},N_{\p})<\infty$ for all $\p \in \Spec(R)$. Then by assumption we have \[\depth_{R_{\p}}(M_{\p} \otimes_{R_{p}}^{L} N_{\p})\leq\depth_{R_{\p}}(M_{\p})+\depth_{R_{\p}}(N_{\p})-\depth(R_{\p})\]
for any $\p \in \Supp(M) \cap \Supp(N)$. Therefore
Lemma \ref{negativeqr} gives that \[q_{R}(M,N) \ge q_{R_{\p}}(M_{\p},N_{\p}) \geq -\depth_{R_{\p}}(M_{\p} \otimes_{R_{\p}}^{L} N_{\p}) \geq \depth(R_{\p}) - \depth_{R_{\p}}(M_{\p}) - \depth_{R_{\p}}(N_{\p})\]
for all $\p \in \Supp(M) \cap \Supp(N)$, as desired.
\end{proof}

\begin{lem}\label{lemmatwoforjorgensensformula} Let $R$ be a Noetherian local ring and let $M$, $N$ be finitely generated modules. Suppose derived $\ldep$ holds on $\Spec{R}$ for the module $M$. If $q_{R}(M,N)<\infty$, then
\[q_{R}(M,N)\leq\sup\{\depth{R_{\p}}-\depth{M_{\p}}-\depth{N_{\p}} \mid \p \mbox{ is a minimal prime of } \Tor_{q_{R}(M,N)}^{R}(M,N)\}.\]
In particular, we have
\[q_{R}(M,N) \le \sup\{\depth(R_{\p})-\depth_{R_{\p}}(M_{\p})-\depth_{R_{\p}}(N_{\p}) \mid \p\in\Supp{M}\cap\Supp{N}\}.\]
\end{lem}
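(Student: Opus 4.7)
The plan is to localize at a minimal prime of the top nonvanishing Tor module, where Lemma \ref{negativeqr} forces equality rather than merely an inequality. Setting $q := q_R(M,N)$ and $T := \Tor^R_q(M,N)$, I would choose $\p$ to be a minimal prime of $T$. Two elementary facts to record first: (i) $\p \in \Supp(M) \cap \Supp(N)$, since $\Supp(T) \subseteq \Supp(M) \cap \Supp(N)$; and (ii) $T_\p$ is a nonzero $R_\p$-module supported only at the maximal ideal $\p R_\p$, hence has finite length.

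Having fixed such a $\p$, the second step is to identify $q$ with the top Tor index over $R_\p$. Since $T_\p = \Tor^{R_\p}_q(M_\p,N_\p) \neq 0$ and since $q_{R_\p}(M_\p,N_\p) \leq q$ holds automatically by localization, we obtain $q_{R_\p}(M_\p,N_\p) = q$. Because $T_\p$ has finite length, $\depth_{R_\p}(T_\p) = 0$, and so Lemma \ref{negativeqr} applied over $R_\p$ yields the equality
\[\depth_{R_\p}(M_\p \otimes^L_{R_\p} N_\p) = -q.\]

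Finally, the derived $\ldep$ hypothesis at $R_\p$ gives $\depth_{R_\p}(M_\p \otimes^L_{R_\p} N_\p) \geq \depth(M_\p) + \depth(N_\p) - \depth(R_\p)$, which combined with the equality above rearranges to
\[q \leq \depth(R_\p) - \depth(M_\p) - \depth(N_\p).\]
Taking the supremum over minimal primes of $T$ proves the first displayed inequality, and the ``in particular'' clause is immediate from $\Min(T) \subseteq \Supp(M) \cap \Supp(N)$.

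The only delicate point, and what I would flag as the main obstacle, is securing \emph{equality} in Lemma \ref{negativeqr} rather than just the generic lower bound; this is exactly what forces the choice of $\p$ to be a minimal prime of $T$ specifically, since only then does the finite-length (and hence depth-zero) property of $T_\p$ make the top-degree homology detectable by $\Hom_{R_\p}(k(\p), -)$. Everything else is routine localization and a direct application of the hypothesis.
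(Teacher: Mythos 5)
Your proposal is correct and follows essentially the same route as the paper: localize at a minimal prime $\p$ of $\Tor^R_{q}(M,N)$, note that the localized top Tor is nonzero of finite length (hence depth zero) so that Lemma \ref{negativeqr} gives the equality $\depth_{R_\p}(M_\p \otimes^L_{R_\p} N_\p) = -q$, and then apply the derived $\ldep$ hypothesis at $R_\p$ and rearrange. The only difference is that you spell out the finite-length justification and the localization inequality $q_{R_\p}(M_\p,N_\p)\le q$ explicitly, which the paper leaves implicit.
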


\begin{proof}
By assumption, we have
\[\depth_{R_{\p}}(M_{\p} \otimes_{R_{\p}}^{L} N_{\p})\geq\depth_{R_{\p}}(M_{\p})+\depth_{R_{\p}}(N_{\p})-\depth(R_{\p})\]
for all $\p \in \Spec(R)$. 
Then if $\p$ is a minimal prime of $\Tor^R_{q_R(M,N)}(M,N)=0$, we have $q_R(M,N)=q_{R_{\p}}(M_{\p},N_{\p})$ and since $\depth_{R_{\p}}(\Tor^{R_{\p}}_{q_{R_{\p}}(M_{\p},N_{\p})}(M_{\p},N_{\p}))=0$, Lemma \ref{negativeqr} gives that
\[q_{R}(M,N)= q_{R_{\p}}(M_{\p},N_{\p}) = -\depth_{R_{\p}}(M_{\p} \otimes_{R_{\p}}^{L} N_{\p}) \leq \depth(R_{\p}) - \depth_{R_{\p}}(M_{\p}) - \depth_{R_{\p}}(N_{\p}),\]
and the claim follows.
\end{proof}

The following is an immediate consequence of Lemma \ref{lemmaoneforjorgensensformula} and Lemma \ref{lemmatwoforjorgensensformula}. 
\begin{theorem}\label{qrformula} Let $R$ be a Noetherian local ring and let $M$, $N$ be finitely generated modules. Assume derived $\dep$ holds on $\Spec{R}$ for the module $M$. If $q_{R}(M,N)<\infty$, then
\[q_{R}(M,N)=\sup\{\depth(R_{\p})-\depth_{R_{\p}}(M_{\p})-\depth_{R_{\p}}(N_{\p}) \mid \p\in\Supp{M}\cap\Supp{N}\}.\]
\end{theorem}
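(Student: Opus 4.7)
The plan is essentially to observe that the statement is a direct combination of the two preceding lemmas. By definition, derived $\dep$ holds on $\Spec(R)$ for $M$ exactly when both derived $\ldep$ and derived $\rdep$ hold on $\Spec(R)$ for $M$. Thus the hypothesis of Theorem \ref{qrformula} implies the hypotheses of both Lemma \ref{lemmaoneforjorgensensformula} and Lemma \ref{lemmatwoforjorgensensformula}.

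Assuming $q_R(M,N)<\infty$, I first apply Lemma \ref{lemmaoneforjorgensensformula}, which uses the derived $\rdep$ assumption together with Lemma \ref{negativeqr} to yield
\[
q_R(M,N)\ \ge\ \sup\{\depth(R_{\p})-\depth_{R_{\p}}(M_{\p})-\depth_{R_{\p}}(N_{\p}) \mid \p\in\Supp(M)\cap\Supp(N)\}.
\]
Then I apply Lemma \ref{lemmatwoforjorgensensformula}, which uses the derived $\ldep$ assumption and gives the reverse inequality
\[
q_R(M,N)\ \le\ \sup\{\depth(R_{\p})-\depth_{R_{\p}}(M_{\p})-\depth_{R_{\p}}(N_{\p}) \mid \p\in\Supp(M)\cap\Supp(N)\}.
\]
Combining the two yields the desired equality.

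There is no real obstacle here; all of the work has already been done in the two lemmas. The only minor subtlety worth mentioning is that Lemma \ref{lemmatwoforjorgensensformula} initially produces the sharper statement with the supremum taken only over minimal primes of $\Tor^R_{q_R(M,N)}(M,N)$, but the lemma already records the weaker bound over $\Supp(M)\cap\Supp(N)$, which is precisely what is needed to line up with the bound coming from Lemma \ref{lemmaoneforjorgensensformula}. So the proof reduces to a one-line citation of both lemmas.
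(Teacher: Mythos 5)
Your proposal matches the paper's proof exactly: the paper also notes that Theorem \ref{qrformula} is an immediate consequence of combining Lemma \ref{lemmaoneforjorgensensformula} (derived $\rdep$ gives the lower bound) and Lemma \ref{lemmatwoforjorgensensformula} (derived $\ldep$ gives the upper bound). Your remark about the supremum over minimal primes of $\Tor^R_{q_R(M,N)}(M,N)$ versus over all of $\Supp(M)\cap\Supp(N)$ is a correct reading of the second lemma and does not affect the argument.
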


\begin{remark}\label{jorrmk} In \cite[Theorem 2.2]{Jo99}, Jorgensen showed the formula
\[q_{R}(M,N)=\sup\{\depth(R_{\p})-\depth_{R_{\p}}(M_{\p})-\depth_{R_{\p}}(N_{\p}) \mid \p\in\Supp{M}\cap\Supp{N}\}.\]
holds whenever $q_R(M,N)<\infty$ and $M$ has finite complete intersection dimension (see e.g. \cite{AG97} as a reference for this theory). Work of Iyengar shows that derived $\dep$ holds for $M$ whenever $M$ has finite complete intersection dimension \cite[Theorem 4.3]{iyengar}. As finiteness of complete intersection dimension is known to localize (see \cite[Proposition 1.6]{AG97}), Theorem \ref{qrformula} thus gives a direct extension of \cite[Theorem 2.2]{Jo99}.
\end{remark}

\section{Questions}\label{questionsection}

The work of the proceeding sections leave open several natural questions. 
Given the obstructions noted in Remark \ref{obstructions} it is natural to ask:
\begin{quest}
Do any of the remaining implications hold in Theorem \ref{rdepthm}? Of special interest, can $\rdep$ be checked on pairs $M,N$ for which $N$ is MCM?
\end{quest}
As noted in Remark \ref{obstructions}, if one can show $\rdep$ can be checked on pairs $M,N$ where $N$ is MCM, then it will follow as a consequence that $\ubc$ implies $\rdep$.

Currently, the only approach we have to construct examples that fail $\rdep$ is to build examples that fail the $\tr$ condition and will thus fail each condition in Theorem \ref{rdepthm}. If the remaining implication of Theorem \ref{rdepthm} end up failing to hold, it would require more exotic constructions to produce counterexamples.

Since Corollary \ref{gorcase} shows that $\ldep$ implies $\rdep$ when $R$ is Gorenstein, it is natural to pose the following:
\begin{quest}
Suppose $R$ is CM.
\begin{enumerate}
\item[$(1)$] Does $\ldep$ always imply $\rdep$?
\item[$(2)$] Does $\rdep$ always imply $\ldep$? What if $R$ is Gorenstein?
\end{enumerate}
\end{quest}
In light of Example \ref{trcountex} and the work of \cite{KO22}, one can find domains of dimension $1$ or normal CM isolated singularities in higher dimension which fail $\tr$, and thus fail $\ldep$ and $\rdep$. However, such examples depend on constructions of Lech and Heitmann, and as a consequence are not excellent \cite{Le86,He94}. It is difficult to see any direct implication of excellence on the $\ldep$, $\rdep$, and $\tr$ conditions, but the graded setting offers notable benefits, namely the implications of $\Ext/\Tor$ vanishing on Hilbert functions. We thus ask the following: 

\begin{quest}
\
\begin{enumerate}
\item[$(1)$] Is there a positively graded generically Gorenstein $k$-algebra which does not satisfy $\ldep$ or $\rdep$? 
\item[$(2)$] Is there a numerical semigroup ring which does not satisfy $\ldep$ or $\rdep$?
\item[$(3)$] Is there a numerical semigroup ring which does not satisfy $\tr$?
\end{enumerate}
\end{quest}

In Theorem \ref{ldepthm}, the equivalence of condition $(5)$ requires the existence of a canonical module, and the behavior is not so clear without this hypothesis. We thus ask:
\begin{quest}
Does condition $(5)$ of Theorem \ref{ldepthm} ascend to the completion?
\end{quest}

Finally, the work of Sections \ref{ldepsection} and \ref{rdepsection} depends fundamentally on the CM assumption, and so we ask:

\begin{quest}
What is the situation when $R$ is not CM? Does derived $\ldep$ still characterize $\uac$ with $b_R=\depth(R)$ in this setting? That derived $\ldep$ implies $\uac$ follows as noted in Remark \ref{obstructions}, but what about the converse?
\end{quest}

\section*{Acknowledgements}

This material is based upon work supported by the National Science Foundation under Grant No. DMS-1928930 and by the Alfred P. Sloan Foundation under grant G-2021-16778, while Lyle was in residence at the Simons Laufer Mathematical Sciences Institute (formerly MSRI) in Berkeley, California, during the Spring 2024 semester. Kimura was party supported by Grant-in-Aid for JSPS Fellows Grant Number 23KJ1117. We are grateful to Yuya Otake and Ryo Takahashi for helpful discussions on the $\ldep$ and $\rdep$ conditions and for giving useful feedback on an earlier draft of the paper. We also thank Hiroki Matsui for helpful discussions.

\bibliographystyle{alpha}
\bibliography{mybib}

\vspace{.3cm}

\end{document}